\theoremstyle{plain}
\newtheorem{definition}{Definition}
\newtheorem{theorem}{Theorem}
\newtheorem{proposition}{Proposition}
\newtheorem{lemma}{Lemma}
\newtheorem{corollary}{Corollary}
\numberwithin{claim}{lemma}
\newtheorem*{remark}{Remark}
\newcommand{\N}{\mathbb{N}}
\newcommand{\om}{\omega}
\newcommand{\lceilx}{ \ulcorner }
\newcommand{\rceilx}{ \urcorner }
\newcommand{\inter}{\mathbf{INT}}
\newcommand{\cont}{\mathbf{CONT}}
\newcommand{\pre}{\mathbf{PRE}}
\newcommand{\alg}{\mathbf{ALG}}
\newcommand{\subconserv}{\mathrel{\mathchoice%
  {\sqsubseteq\mkern-15mu{\raise0.17em\hbox{\scriptsize$\mathrm{c}$}\mkern5mu}}%
  {\sqsubseteq\mkern-15mu{\raise0.17em\hbox{\scriptsize$\mathrm{c}$}\mkern5mu}}%
  {\sqsubseteq\mkern-11mu{\raise0.12em\hbox{\tiny$\mathrm{c}$}\mkern4mu}}%
  {\sqsubseteq\mkern-11mu{\raise0.12em\hbox{\tiny$\mathrm{c}$}\mkern4mu}}%
}}
\newcommand{\supconserv}{\mathrel{\mathchoice%
  {\sqsupseteq\mkern-15mu{\raise0.17em\hbox{\scriptsize$\mathrm{c}$}\mkern5mu}}%
  {\sqsupseteq\mkern-15mu{\raise0.17em\hbox{\scriptsize$\mathrm{c}$}\mkern5mu}}%
  {\sqsupseteq\mkern-11mu{\raise0.12em\hbox{\tiny$\mathrm{c}$}\mkern4mu}}%
  {\sqsupseteq\mkern-11mu{\raise0.12em\hbox{\tiny$\mathrm{c}$}\mkern4mu}}%
}}
\def\<#1>{\langle #1 \rangle}
\def\I#1{{{\mathrm{\mathbf I}}(#1)}}
\def\name#1{{\ulcorner{#1}\urcorner}}
\def\calP{{\mathcal P}}
\begin{document}

\sloppy

\title{Ideal Presentations and Numberings of Some Classes of Effective Quasi-Polish Spaces}

\author{Matthew de Brecht\\
        Graduate School of Human and Environmental Studies\\ Kyoto University, Japan 
\\       { \tt matthew@i.h.kyoto-u.ac.jp}
\\and
\\Takayuki Kihara\\
{\normalsize Department of Mathematical Informatics, Graduate School of Informatics}\\
{\normalsize Nagoya University, Japan}\\
{\normalsize {\tt kihara@i.nagoya-u.ac.jp}}
\\and
\\Victor Selivanov
\\{\normalsize A.P. Ershov Institute of Informatics Systems SB RAS}\\
{\normalsize Novosibirsk, Russia}\\
{\normalsize {\tt vseliv@iis.nsk.su}}
}


\maketitle

\begin{abstract}


The well known ideal presentations of countably based domains were recently extended to  (effective) quasi-Polish spaces. Continuing these investigations, we explore some classes of effective quasi-Polish spaces. In particular, we prove an effective version of the domain-characterization of quasi-Polish spaces, describe  effective  extensions of quasi-Polish topologies, discover  natural numberings of classes of effective quasi-Polish spaces, estimate the complexity of the (effective) homeomorphism relation  and of some classes of spaces w.r.t. these numberings, and investigate degree spectra of continuous domains.

{\em Key words.} Effective quasi-Polish space, effective Polish space,  effective domain,  c.e.\ transitive relation, interpolable relation, space of ideals, numbering, index set,  separation axioms, degree spectrum.
\end{abstract}

\section{Introduction}\label{intro}



The investigation of computability in topological structures (which is currently a hot topic in computability theory) is  less straightforward than the investigation of computability in countable algebraic structures \cite{eg99,ak00}. A reason is that it is not clear how to capture the computability issues for a topological space (even if the space is Polish) by a single countable algebraic structure. Nevertheless, people often look for analogues of well-developed notions and methods of the computable structure theory in the topological context. For instance, analogues of computable categoricity turned out fruitful also in the study of computable metric spaces and Banach spaces (see e.g. \cite{Mel13,McSt19}), and analogues of degree spectra turned out interesting also for topological spaces \cite{s20,hks,HTMN}.

In this paper, which is a major update of the conference paper \cite{bks22}\footnote{Sections \ref{domchar}, \ref{ext}, \ref{dom}, \ref{enumerating-continuous-domains}, \ref{spectra} are entirely new, Section \ref{complete} is extended by  new facts, other sections remain essentially the same as in \cite{bks22}.}, we explore several classes of effective quasi-Polish  (EQP-) spaces. Quasi-Polish (QP-) spaces \cite{br} are a class of well-behaved countably based spaces that has many interesting characterisations and includes many spaces of interest in analysis and theoretical computer science, such as Polish spaces, $\omega$-continuous domains, and countably based spectral spaces.  The study  of effective versions of  QP-spaces was initiated in \cite{kk17,s15} and recently continued in \cite{br1,hoy,br2}. In particular, it was shown that some equivalent characterizations of QP-spaces become non-equivalent in the effective setting, and it is not obvious which of the resulted notions are the ``right'' ones.

Theorem 11 in \cite{br1} characterises the EQP-spaces (called there precomputable QP-spaces)  as the spaces of ideals of c.e.\ transitive relations on $\omega$ (see also Theorem 3 in \cite{br2} for a more direct proof). This characterisation is very much in the spirit of domain theory where similar characterisations of computable domains are important. It is a basic technical tool of our paper because it enables, in particular, to discover  natural numberings of classes of EQP-spaces and to estimate the complexity of the (effective) homeomorphism relation  and of some classes of spaces w.r.t. these numberings. 
Our investigation of numberings of classes of EQP-spaces is  analogous to the investigation of numberings of classes of algebraic structures popular in computable structure theory (see e.g. \cite{s03,gn,ffh12,ffn12,FHM14} and references therein).

Along with the mentioned results on numberings of classes of EQP-spaces, we obtain several other results. In particular, we prove an effective version of the domain-characterization of quasi-Polish spaces \cite[Theorem 53]{br}, describe  effective  extensions of quasi-Polish topologies (partially effectivising some results in \cite[Section 14]{br}),  and investigate degree spectra of continuous domains (continuing the investigation of degree spectra of algebraic domains in \cite{s20} and of Polish spaces in \cite{hks,HTMN}). 

After recalling some preliminaries in the next section,  we discuss in Section \ref{domchar} some variations of the ideal characterisation of EQP-spaces. In Section \ref{ext} we apply this to the problem of extending EQP topologies. In Section \ref{dom} we study ideal characterizations for classes of effective domains partly identified in \cite{s20}. This study  turns out closely related to the study of computable and c.e.\ partial orders and some other  binary relations  of interest to computability theory (see e.g. \cite{chol} and references therein). In particular, our discussion leads to an effective version of a domain-characterization of quasi-Polish spaces (\cite{br}, Theorem 53). In Section \ref{func} we describe representations of computable functions between some classes of effective domains.

In  Section \ref{num} we discuss natural numberings of some classes of c.e. binary relations on $\omega$ and of the corresponding classes of EQP-spaces (some of which were considered in \cite{s20}).
In Section \ref{enumerating-continuous-domains} we construct a computable numbering of c.e.~continuous domains (which gives, in a sense, a partial answer to the question of enumerability of c.e.~interpolable relations posed in the conference paper [6]); the existence of such a numbering  is much less straightforward than the corresponding results for algebraic domains.
In Section \ref{comisom}  we  estimate the complexity of  (effective) homeomorphism in the introduced numberings in parallel to the similar question for algebraic structures (see e.g. \cite{gn,ffh12,ffn12}).
In Section \ref{complete} we establish precise estimates of index sets of some popular classes of topological spaces related to separation axioms.
We conclude in Section \ref{spectra} by presenting several non-trivial facts about degree spectra of continuous domains which complement some results in \cite{s20,hks,HTMN}.


\section{Preliminaries}\label{prel}

Here we recall some notation, notions and facts used throughout the paper. More special information is recalled in the corresponding sections below.

We use standard set-theoretical notation, in particular, $Y^X$ is the set of functions from $X$ to $Y$, and $P(X)$ is the class of subsets of a set $X$.
All (topological) spaces  in this paper are  countably based $T_0$ (cb$_0$-spaces, for short). We denote the homeomorphism relation by $\simeq$.
An {\em  effective space} is a pair $(X,\beta)$ where $X$ is a cb$_0$-space, and $\beta:\omega\to P(X)$ is a numbering of a base in $X$ such that there is a uniformly c.e.\ sequence $\{A_{ij}\}$ of c.e.\ sets with
$\beta(i)\cap\beta(j)=\bigcup\beta(A_{ij})$ where $\beta(A_{ij})$ is the image of $A_{ij}$ under $\beta$. We simplify $(X,\beta)$ to $X$ if $\beta$ is clear from the context. 
Any subspace $Y$ of an effective space $(X,\beta)$ is effective with the induced numbering $\beta_Y(n)=\beta(n)\cap Y$ of the base sets.

The effective space $(X,\beta)$ is {\em c.e.\ (or overt)} if the set $\{n\mid\beta(n)\neq\emptyset\}$ is c.e. A subspace of a c.e.\ space is not necessarily c.e.
Among the effective spaces are: the discrete space $\mathbb{N}$ of natural numbers, the Euclidean spaces ${\mathbb R}^n$, the Scott domain $P\omega$ (the powerset of the natural numbers with the Scott-topology; see \cite{aj} for information about domains), the Baire space $\mathcal{N}=\mathbb{N}^\mathbb{N}$, the Hilbert cube $[0,1]^\omega$; all these spaces come  with natural numberings of bases. 
With any effective  space $(X,\beta)$ we associate the {\em canonical embedding}~$e:X\to P\omega$ defined by $e(x)=\{n\mid x\in \beta(n)\}$. The canonical embedding is a computable homeomorphism between $X$ and the subspace $e(X)$ of $P\omega$. 

In any effective space $X$, one can define effective versions of classical hierarchies (see e.g. \cite{s15}), in particular the effective Borel hierarchy $\{\Sigma^0_{1+n}(X)\}_{n<\omega}$ and the  effective Luzin hierarchy  $\{\Sigma^1_{n}(X)\}_{n<\omega}$. For $X=\omega$, these coincide resp. with the arithmetical and analytical hierarchies.

An effective space is {\em effective Polish} (resp. {\em effective quasi-Polish}, abbreviated as EQP) if it is effectively homeomorphic to a $\Pi^0_2$-subspace of the Hilbert cube (resp. the Scott domain).
Note that EQP-spaces are  called  in \cite{br1}  precomputable QP-spaces, while c.e.\ EQP-spaces are called computable QP-spaces.
All the aforementioned examples of spaces are c.e.~EQP-spaces.
Moreover, all computable Polish spaces and all computable $\omega$-continuous domains (see e.g. \cite{s15}) are c.e.\ EQP-spaces.
However, an effective Polish space is not necessarily computable Polish (nor even $\Delta^1_1$-computable Polish; for example, consider a $\Pi^0_1$ subspace $P$ of $\N^\N$ with no $\Delta^1_1$ elements. If $P$ were $\Delta^1_1$-overt, $P$ would have a $\Delta^1_1$ element).
The relation of effective homeomorphism between effective spaces will be denoted by $\simeq_e$. Note that if $(X,\beta)\simeq_e(Y,\gamma)$ then $X\simeq Y$. All classes of effective spaces considered below will be closed under $\simeq_e$. 

We use standard terminology about binary relations and about domains (see e.g. \cite{aj,s20}). In particular, an {\em ideal} of $(S;\rho)$ is a directed lower subset of $S$. By {\em interpolable} relations we mean transitive relations $\prec$ on $S$ such that any initial segment $\{x\mid x\prec y\}$, $y\in S$, is directed.

We briefly recall some terminology of domain theory (see e.g. \cite{aj,gh03,er93} for details). Let $X$ be a $T_0$-space. The {\em specialization order} $\leq_X$ on $X$ is defined by: $x\leq_Xy$, if $x\in U$ implies $y\in U$, for each open set $U\subseteq X$. Any continuous function $f:X\to Y$ is monotone w.r.t. $\leq_X,\leq_Y$. Let $K(X)$ be the set of {\em compact elements} of $X$, i.e. elements $x$ such that $\uparrow{x}=\{y\mid x\leq_Xy\}$ is open. The {\em approximation relation} $\ll_X$ on $X$ is defined by: $x\ll_Xy$, if $y$ is in the interior of $\uparrow{x}$. Clearly, $\ll_X\subseteq\leq_X$.
One can see that if $x\in X$ is a compact element, then $x\ll_Xy$ if and only if $x\leq_Xy$.
Note also that $x\in X$ is a compact element if and only if $x\ll_Xx$.

The space $X$ is a {\em continuous domain}, if $(X;\leq_X)$ is directed complete and there is a set $B\subseteq X$ (called a domain basis of $X$) such that the sets $\{x\mid b\ll_Xx\}$, $b\in B$, form a base of the topology in $X$, and every $x\in X$ is the directed supremum of $\{b\in B\mid b\ll_Xx\}$. An {\em $\omega$-continuous domain} is a continuous domain which possesses a countable domain base.
The space $X$ is an {\em algebraic domain}, if $(X;\leq_X)$ is directed complete and the sets $\uparrow{c}$, $c\in K(X)$, form a base of the topology in $X$. Every algebraic domain is also a continuous domain because $K(X)$ is a domain basis. An {\em $\omega$-algebraic domain} is an algebraic domain with  countably many compact elements.

The study of some classes of EQP-spaces  is closely related to the mentioned types of binary relations on $\omega$ with some effectivity conditions. Correspondingly, we assume the reader to be familiar with basics of computability theory, see e.g. \cite{ro67}. The notions of computable, c.e., and co-c.e.~binary relations are defined in a natural way. There are many interesting results about computable relations (as well as about computable and c.e.~structures in general). Below we will use the following nice fact (Theorem 2.1 in \cite{chol}) about effective partial orders: There is a co-c.e.~partial order on $\omega$ which is not isomorphic to any c.e.~partial order, and similarly with c.e.~and co-c.e.~interchanged.

We conclude this section with recalling the basic fact established in  Theorem 11 \cite{br1} (see also Theorem 3 in \cite{br2} for additional details).

\begin{definition}
Let $\prec$ be a transitive relation on $\omega$. A subset $I\subseteq \omega$ is an \emph{ideal} (with respect to $\prec$) if and only if:
\begin{enumerate}
\item
$I \not=\emptyset$,\hfill (\emph{$I$ is non-empty})
\item
$(\forall a \in I) (\forall b \in \omega)\, (b \prec a \Rightarrow b \in I)$,\hfill (\emph{$I$ is a lower set})
\item
$(\forall a,b \in I)(\exists c\in I)\, (a \prec c  \,\&\,  b \prec c)$.\hfill (\emph{$I$ is directed})
\end{enumerate}
The collection $\I{\prec}$ of all ideals has the topology generated by basic open sets of the form $[n]_{\prec} = \{ I \in \I{\prec} \mid n \in I\}$ for $n\in\omega$.
\qed
\end{definition}

As shown in \cite[Theorem 11]{br1}, such spaces of ideals are closely related to QP-spaces, namely: a space $X$ is quasi-Polish iff it is homeomorphic to $\I{\prec}$ for some transitive relation $\prec$ on $\omega$. Moreover,  an effective space $(X,\xi)$ is EQP iff it is computably homeomorphic to $\I{\prec}$ for some transitive c.e.\ relation $\prec$ on $\omega$.

\section{On ideal presentations of EQP-spaces}\label{domchar}

In this section we prove some  variations of Theorem 11 in \cite{br1}. 
A natural related question is: which class of EQP spaces is obtained if we restrict c.e. transitive relations above to, say, computable strict partial orders. It turns out that nothing new appears (although, as we show in Section \ref{dom}, similar variations for effective domains lead to new interesting notions).


\begin{proposition}\label{compstrict}
For every c.e.\ transitive relation $\prec$ on $\omega$, there is a computable strict partial order $\sqsubset$ on $\omega$ such that $\I{\prec}$ and $\I{\sqsubset}$ are computably homeomorphic.
\end{proposition}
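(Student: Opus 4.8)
The plan is to replace the c.e.\ relation $\prec$ by a stage‑approximated version living on pairs, where a purely numerical timestamp guard buys both decidability and irreflexivity while transitivity of $\prec$ guarantees that nothing is lost at the level of ideals. Fix a computable enumeration of $\prec$ and let $\prec_s$ be a uniformly computable, increasing sequence of finite transitive relations with $\bigcup_s\prec_s=\prec$, obtained by taking transitive closures of finite portions of the enumeration (this is transitive and lands inside $\prec$ because $\prec$ is already transitive). Working on $\omega$ through a computable pairing, I would define
$\<n,s> \sqsubset \<m,t>$ iff $n\prec_t m$ and $s<t$.

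That $\sqsubset$ is a computable strict partial order is immediate. It is decidable since $\prec_t$ is uniformly computable and $s<t$ is decidable; it is irreflexive since $s<s$ fails (irrespective of whether $n\prec_s n$), so antisymmetry and acyclicity come for free; and it is transitive because from $n\prec_t m$, $s<t$, $m\prec_u k$, $t<u$ one gets $n\prec_u m$ by monotonicity and then $n\prec_u k$ by transitivity of $\prec_u$, while $s<t<u$.

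For the homeomorphism I would use the projection $\Psi(J)=\{n : \exists s\ \<n,s>\in J\}$ and its candidate inverse $\Phi(I)=\{\<n,s> : n\in I,\ s\in\omega\}$. Checking that $\Phi(I)$ is an ideal of $\sqsubset$ is routine (lower‑set closure uses that $\prec_t\subseteq\prec$, and directedness uses directedness of $I$ together with the fact that a witness $n\prec c,\ m\prec c$ appears at some stage, which can be taken large), and clearly $\Psi(\Phi(I))=I$. The crux, and the step I expect to be the main obstacle, is that every ideal $J$ of $\sqsubset$ has \emph{full columns}: $\<n,s_0>\in J$ implies $\<n,s>\in J$ for all $s$, whence $J=\Psi(J)\times\omega=\Phi(\Psi(J))$ and $\Phi,\Psi$ are mutually inverse. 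This is exactly where transitivity of $\prec$ is essential. Directedness of $J$, applied repeatedly starting from $\<n,s_0>$, forces an infinite $\sqsubset$‑chain $\<n,s_0>\sqsubset\<k_1,u_1>\sqsubset\<k_2,u_2>\sqsubset\cdots$ in $J$ whose timestamps $u_i$ strictly increase, hence tend to infinity; transitivity of $\sqsubset$ (and thus of the $\prec_{u_i}$) yields $n\prec_{u_i}k_i$ for every $i$. Given a target $s$, choose $i$ with $u_i>s$: then $\<n,s>\sqsubset\<k_i,u_i>\in J$, so $\<n,s>\in J$ by downward closure. Granting full columns, $\Psi(J)$ is easily an ideal of $\prec$: its lower‑set property reduces to producing a high‑stage copy $\<m,t'>\in J$ of any $m\in\Psi(J)$ (available by full columns) and using downward closure, and directedness reduces to applying directedness of $J$ to $\<n,0>,\<m,0>$.

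Finally, the two maps are effectively continuous in both directions, which gives the computable homeomorphism $\I{\prec}\simeq_e\I{\sqsubset}$. Concretely $\Psi^{-1}([\<m,t>]_\sqsubset)=[m]_\prec$ and $\Phi^{-1}([n]_\prec)=\bigcup_s[\<n,s>]_\sqsubset$, and both translations of basic open sets into (unions of) basic open sets are computable uniformly in the given indices, so the bijection $\Psi$ and its inverse $\Phi$ realize a computable homeomorphism of the ideal spaces.
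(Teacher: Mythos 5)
Your proof is correct, but it takes a leaner route than the paper's. The paper also attaches a timestamp coordinate to gain computability and irreflexivity, but its order lives on pairs $\langle F,m\rangle$ with $F$ a \emph{finite subset} of $\omega$, governed by four clauses (inclusion of the finite sets, increase of timestamps, a stage-bounded downward-closure transfer, and existence of a stage-verified upper bound); the homeomorphism is then $I\mapsto\{\langle F,m\rangle : F\subseteq I \text{ finite},\ m\in\omega\}$ with inverse $J\mapsto\bigcup_{\langle F,m\rangle\in J}F$, and the verification that these are mutually inverse goes through a delicate chase along $\sqsubset$-upper bounds using those clauses. You instead work with single elements, defining $\langle n,s\rangle\sqsubset\langle m,t\rangle$ iff $n\prec_t m$ and $s<t$, and you isolate the one lemma that makes everything collapse: every ideal $J$ of $\sqsubset$ has full columns, proved via the unbounded-timestamp chain obtained from directedness. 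Granting that lemma, $J=\Psi(J)\times\omega$, and all remaining verifications (that $\Phi(I)=I\times\omega$ and $\Psi(J)$ are ideals, that the maps are mutually inverse and effectively continuous, with $\Phi^{-1}\bigl([\langle m,t\rangle]_\sqsubset\bigr)=[m]_\prec$ and $\Psi^{-1}\bigl([n]_\prec\bigr)=\bigcup_s[\langle n,s\rangle]_\sqsubset$) are routine and correct as you argue them; your choice of $\prec_s$ as transitive closures of finite fragments is exactly what makes transitivity of $\sqsubset$ go through, since $\prec$ being transitive keeps these closures inside $\prec$. Two small remarks: in your final paragraph the labels $\Psi$ and $\Phi$ are swapped relative to the displayed preimage formulas (the formulas themselves are the right ones), and it is worth knowing that the paper's heavier finite-set coding is not gratuitous --- it is the template reused in the extension theorem of Section \ref{ext}, where a marker symbol is added to the finite sets, whereas your element-wise version, while cleaner for this proposition, does not directly support that later construction.
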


\begin{proof} Let $\prec^{(k)}$ be a computable relation such that $x \prec^{(k)} y$ holds if and only if $x \prec y$ is accepted by some fixed Turing machine within $k$ steps. Let $P_{fin}\omega$ be the set of finite subsets of $\omega$, and for $F,G \in P_{fin}\omega$ and $m,n\in\omega$ define $\langle F,m\rangle \sqsubset \langle G,n\rangle$ if and only if the following all hold:
\begin{enumerate}
\item
$F \subseteq G$,
\item
$m < n$,
\item
$(\forall y\in F)(\forall x\leq n)\,[ x\prec^{(n)} y \Rightarrow x \in G]$,
\item
$(\exists y\in G)(\forall x\in F)\, x \prec^{(n)} y$.
\end{enumerate}
It is easy to verify that $\sqsubset$ is computable, irreflexive, and transitive, hence it is a computable strict partial order. Define $f\colon \I{\prec} \to \I{\sqsubset}$ as 
\[f(I) = \{ \langle F, m\rangle \mid F\subseteq I \text{ is finite} \,\&\, m\in\omega\},\]
and define $g\colon \I{\sqsubset} \to \I{\prec}$ as 
\[g(I) = \bigcup_{\langle F,m\rangle \in I} F.\]

It is straightforward to show that $f$ is well-defined and computable. We verify that $g$ is well-defined, and then computability will be obvious. Fix any $I \in \I{\sqsubset}$. It is clear that $g(I) \not=\emptyset$. To show $g(I)$ is a lower set, assume $x \prec y \in g(I)$. Choose $k$ large enough that $x\leq k$ and $x \prec^{(k)} y$, and fix $\langle F,m\rangle \in I$ with $y \in F$. Using the directedness of $I$ and $(2)$, there is $\langle G,n\rangle \in I$ with $\langle F,m\rangle \sqsubset \langle G,n\rangle$ and $k \leq n$. It follows from $(3)$ that $x\in G$, hence $x \in g(I)$. To prove that $g(I)$ is directed, note that for any $\langle F,m\rangle, \langle G,n\rangle\in I$, there is $\langle H,p\rangle \in I$ with  $\langle F,m\rangle \sqsubset \langle H,p\rangle$ and $\langle G,n\rangle \sqsubset \langle H,p\rangle$, so by $(4)$ there are $x,y\in H$ satisfying $(\forall w\in F)\, w \prec x$ and $(\forall w\in G)\, w \prec y$. Then again using directedness of $I$, there is $\langle H',p'\rangle\in I$ with $\langle H, p \rangle \sqsubset \langle H', p'\rangle$, hence $(4)$ and the transitivity of $\prec$ implies every pair of elements in $F\cup G$ has a $\prec$-upper bound in $H'$ which is contained in $g(I)$.

It only remains to show that $f$ and $g$ are inverses of each other. It is clear that $g(f(I)) = I$ for each $I\in \I{\prec}$. It is also clear that $I \subseteq f(g(I))$ for each $I\in \I{\sqsubset}$. To see that $f(g(I)) \subseteq I$, fix $\langle F,m\rangle \in f(g(I))$. For each $x\in F$ there is $\langle F_x,m_x\rangle\in I$ with $x \in F_x$. Let $\langle G, n\rangle$ be a $\sqsubset$-upper bound of $\{ \langle F_x, m_x \rangle \mid x \in F\}$ in $I$, and let $\langle H,p\rangle\in I$ be a $\sqsubset$-upper bound of $\langle G,n\rangle$. Then there exists $y\in H$ such that $x \prec^{(p)} y$ for each $x \in F$. Furthermore, from the directedness of $I$ and $(2)$, we can assume without loss of generality that $m < p$. It then follows that $\langle F,m\rangle \sqsubset \langle H,p \rangle\in I$, hence $\langle F,m\rangle\in I$ because $I$ is a lower set. This completes the proof that $\I{\prec}$ and $\I{\sqsubset}$ are computably homeomorphic.
\end{proof}

Now we relate the spaces of ideals $\I{\prec}$ where $\prec$ is transitive, to the spaces $\I{\sqsubseteq}$ where $\sqsubseteq$ is a partial order on $\omega$. Recall that an ideal of $\sqsubseteq$ is {\em principal} if it has a largest element w.r.t. $\sqsubseteq$.

\begin{proposition}\label{comppo}
For any c.e. transitive relation $\prec$ on $\omega$ there exists a computable partial order $\sqsubseteq$ on $\omega$ such that $\I{\prec}$ is computably homeomorphic to the subspace of non-principal ideals in $\I{\sqsubseteq}$.
\end{proposition}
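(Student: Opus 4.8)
The plan is to reduce to the case of a strict partial order and then simply add reflexivity. First I would apply Proposition~\ref{compstrict} to obtain a computable strict partial order $\sqsubset$ on $\omega$ with $\I{\prec}\simeq_e\I{\sqsubset}$; it then suffices to realise $\I{\sqsubset}$ as the subspace of non-principal ideals of some computable partial order. For this I take $\sqsubseteq$ to be the reflexive closure of $\sqsubset$, i.e. $x\sqsubseteq y\iff x=y\text{ or }x\sqsubset y$. Since $\sqsubset$ is computable, irreflexive, and transitive, $\sqsubseteq$ is plainly a computable partial order, antisymmetry following from transitivity and irreflexivity of $\sqsubset$.

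The core of the argument is the claim that, as families of subsets of $\omega$, the $\sqsubset$-ideals coincide exactly with the non-principal $\sqsubseteq$-ideals; the desired homeomorphism is then just the identity. Two of the three inclusions are routine. First, lower-closedness with respect to $\sqsubseteq$ is the same condition as lower-closedness with respect to $\sqsubset$, because $y\sqsubseteq a$ forces either $y=a$ or $y\sqsubset a$. Second, since $\sqsubset\,\subseteq\,\sqsubseteq$, every $\sqsubset$-directed set is $\sqsubseteq$-directed, so every $\sqsubset$-ideal is a $\sqsubseteq$-ideal; moreover it has no largest element, for a largest element $a$ together with $\sqsubset$-directedness would yield some $c$ with $a\sqsubset c\sqsubseteq a$, contradicting irreflexivity. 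Hence every $\sqsubset$-ideal is a non-principal $\sqsubseteq$-ideal.

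The one step requiring a short argument, and the main obstacle, is the converse: a non-principal $\sqsubseteq$-ideal $J$ is $\sqsubset$-directed. Given $a,b\in J$, $\sqsubseteq$-directedness supplies $c_0\in J$ above both. Because $J$ has no largest element, $c_0$ is not a $\sqsubseteq$-upper bound of $J$, so some $e\in J$ satisfies $e\not\sqsubseteq c_0$; applying $\sqsubseteq$-directedness to $c_0$ and $e$ yields $c_1\in J$ with $c_0\sqsubseteq c_1$ and $e\sqsubseteq c_1$, and $c_1\neq c_0$ since $e\sqsubseteq c_1$ but $e\not\sqsubseteq c_0$. Thus $c_0\sqsubset c_1$, whence $a\sqsubset c_1$ and $b\sqsubset c_1$ by transitivity, giving $\sqsubset$-directedness. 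This establishes the set-theoretic equality of the two families of ideals.

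Finally I would observe that the identification is topological and computable: the basic open set $[n]_{\sqsubset}$ of $\I{\sqsubset}$ corresponds, under the identity on subsets of $\omega$, precisely to the trace $\{J\in\I{\sqsubseteq}\mid J\text{ non-principal}\,\&\,n\in J\}$ of the basic open set $[n]_{\sqsubseteq}$ on the subspace of non-principal ideals. Hence the identity map and its inverse are computable homeomorphisms, which together with Proposition~\ref{compstrict} completes the proof.
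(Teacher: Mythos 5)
Your proof is correct and takes essentially the same route as the paper's: invoke Proposition~\ref{compstrict}, pass to the reflexive closure $\sqsubseteq$ of the resulting computable strict order $\sqsubset$, and show that the $\sqsubset$-ideals are exactly the non-principal $\sqsubseteq$-ideals, so the identity is the desired computable homeomorphism. Your handling of the one non-routine step (using non-principality to replace a $\sqsubseteq$-upper bound $c_0$ by a strictly larger $c_1$, yielding $\sqsubset$-directedness) spells out in full a detail the paper's proof states only tersely.
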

\begin{proof}
From Proposition~\ref{compstrict} there is a computable strict partial order $\sqsubset$ on $\omega$ such that $\I{\prec}$ and $\I{\sqsubset}$ are computably homeomorphic. Define $\sqsubseteq$ to be the reflexive closure of $\sqsubset$. It suffices to show that any $I\subseteq \omega$ is an ideal of $\sqsubset$ if and only if $I$ is a non-principal ideal of $\sqsubseteq$.

Assume $I\in\I{\sqsubset}$. Clearly $I\not=\emptyset$. If $a \sqsubseteq b \in I$ then since $I$ is $\sqsubset$-directed there is $c\in I$ with  $b \sqsubset c$, hence $a \sqsubset c$ which implies $a \in I$ because $I$ is a $\sqsubset$-lower set. Therefore, $I$ is a $\sqsubseteq$-lower set.  Since $I$ is $\sqsubset$-directed it is clear that $I$ is $\sqsubseteq$-directed and also non-principal with respect to $\sqsubseteq$.

Conversely, assume $I\in\I{\sqsubseteq}$ is non-principal. It is immediate that $I$ is non-empty and a $\sqsubset$-lower set. Every $a,b \in I$ has a $\sqsubseteq$-upper bound $c\in I$, and since $I$ is non-principal there is $c'\in I$ with $c \subseteq c'$. Therefore, $c'$ is a $\sqsubset$-upper bound of $a$ and $b$, which proves $I$ is $\sqsubset$-directed.
\end{proof}

\section{Extending EQP topologies}\label{ext}

It is known that if countably many $\mathbf{\Delta}^0_2$-sets are added to the topology of a quasi-Polish space then the resulting space is again quasi-Polish \cite{br}. The following result is an effective version of this observation, but restricted to closed sets.

\begin{theorem}
Given a c.e.\ transitive relation $\prec$ on $\omega$ and a c.e.\ set $U \subseteq \omega$, one can effectively obtain a c.e.\ transitive relation $\sqsubset$ on $\omega$ such that $\I{\sqsubset}$ is computably homeomorphic to the space obtained by adding $A= \{ I \in \I{\prec} \mid (\forall x\in U)\, x\not\in I\}$ as a c.e.\ open set to the topology of $\I{\prec}$. 
\end{theorem}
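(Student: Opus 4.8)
The plan is to exploit that, since $U$ is c.e., the set $A=\{I\in\I{\prec}\mid I\cap U=\emptyset\}$ is the complement of the open set $\bigcup_{x\in U}[x]_{\prec}$ and is therefore closed; adding it as an open set thus makes $A$ clopen. A routine check then shows that the resulting topology $\tau'$ on $\I{\prec}$ is exactly that of the topological sum $A\oplus V$, where $V=\I{\prec}\setminus A=\{I\mid I\cap U\neq\emptyset\}$ and both summands carry their subspace topologies: on $A$ the subbasic set $A$ becomes the whole space and on $V$ it becomes empty, so in each case the subspace topology inherited from $\tau'$ coincides with the one inherited from the original topology, and any $\tau'$-open set splits as the union of its traces on the two clopen pieces. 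Hence it suffices to present each of $V$ and $A$ as the ideal space of a c.e.\ transitive relation, effectively in $\prec$ and $U$, and then to combine them.

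For the combination I would use that the disjoint union of two c.e.\ transitive relations realises the topological sum of their ideal spaces. If $\sqsubset$ is the relation on a computable copy of $\omega\sqcup\omega$ that restricts to $\sqsubset_0$ on the first copy, to $\sqsubset_1$ on the second, and relates nothing across the copies, then no ideal of $\sqsubset$ can meet both copies (directedness would demand a common upper bound, of which there is none), so $\I{\sqsubset}=\I{\sqsubset_0}\oplus\I{\sqsubset_1}$ with basic open sets matching. This step is purely formal and computable.

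It remains to build the two relations, which I would do by decorating the relation of Proposition~\ref{compstrict} with a single extra clause. Writing $U^{(n)}$ for the finite set of elements enumerated into $U$ within $n$ steps, declare $\langle F,m\rangle\sqsubset_i\langle G,n\rangle$ to hold iff conditions (1)--(4) of Proposition~\ref{compstrict} hold together with
\[ G\cap U^{(n)}\neq\emptyset \ \ (i=0),\qquad G\cap U^{(n)}=\emptyset \ \ (i=1). \]
Both $\sqsubset_0,\sqsubset_1$ are computable, irreflexive and transitive, since the extra clause constrains only the larger element of a comparison and is therefore inherited along a $\sqsubset_i$-chain. The maps $f,g$ of Proposition~\ref{compstrict} then still implement the homeomorphisms, now between $\I{\sqsubset_i}$ and the appropriate summand: rerunning the verification of Proposition~\ref{compstrict} with the clause in force shows $g(J)=\bigcup_{\langle F,m\rangle\in J}F$ is a $\prec$-ideal for every $J\in\I{\sqsubset_i}$, and the clause forces $g(J)\cap U\neq\emptyset$ when $i=0$ and $g(J)\cap U=\emptyset$ when $i=1$. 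For the latter one uses that the second coordinates in an ideal are unbounded, so any $x\in g(J)$ entering $U$ at a stage $s$ would appear in $G\cap U^{(n)}$ for some $\langle G,n\rangle\in J$ with $n\geq s$, against the clause. Conversely, for $I\in V$ (resp.\ $I\in A$) the set $\{\langle F,m\rangle\mid F\subseteq I\}$ is an ideal of $\sqsubset_0$ (resp.\ $\sqsubset_1$), because the upper bounds produced in Proposition~\ref{compstrict} may be taken with $H\subseteq I$; then $H\cap U=\emptyset$ when $I\in A$, while a witness $u\in I\cap U$ can be retained in $H$ when $I\in V$, so the clause is met in either case.

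The main obstacle is the relation $\sqsubset_1$ for the closed part $A$: membership in $A$ is the co-c.e.\ condition $I\cap U=\emptyset$, so it is a priori unclear that $A$ admits a c.e.\ (indeed computable) ideal presentation at all. The stage-indexed clause $G\cap U^{(n)}=\emptyset$ is what resolves this, being decidable yet, thanks to the unboundedness of second coordinates in non-principal ideals, ruling out exactly those ideals whose union eventually meets $U$; the surviving ideals are precisely the $U$-avoiding ones. (Alternatively one could avoid explicit relations: via the canonical embedding $e$, the sets $V$ and $A$ become an open and a closed subset of the $\Pi^0_2$ set $e(\I{\prec})\subseteq P\omega$, hence are themselves $\Pi^0_2$ in $P\omega$ and so EQP-spaces, whereupon the effective form of Theorem~11 of \cite{br1} yields $\sqsubset_0,\sqsubset_1$ uniformly.)
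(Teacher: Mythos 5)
Your proposal is correct, but it takes a genuinely different route from the paper. The paper builds a \emph{single} c.e.\ relation on pairs $\langle F,m\rangle$ with $F\subseteq\omega\cup\{*\}$, where the dummy symbol $*$ marks an ideal's commitment to avoiding $U$: its condition (4) ($*\in G$ or $G$ meets $U$) forces every ideal to choose a side, condition (5) polices consistency of the $*$-side against the stagewise enumeration $U^{(n)}$, and one computable bijection $g\colon \I{\sqsubset}\to\I{\prec}$ is shown to carry basic opens to sets of the form $O$ or $A\cap O$, which yields the refined topology directly. You instead externalize that dichotomy: since $A$ is co-c.e.\ closed, adding it as an open set makes it clopen, so the refined space is the topological sum $V\oplus A$, and you present each summand separately by decorating the order of Proposition~\ref{compstrict} with a one-sided stagewise clause ($G\cap U^{(n)}\neq\emptyset$ for $V$, $G\cap U^{(n)}=\emptyset$ for $A$), gluing by disjoint union of relations. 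Both proofs hinge on the same key trick — using unboundedness of the second coordinates in any ideal to convert the co-c.e.\ condition $I\cap U=\emptyset$ into a decidable clause on the larger element of each comparison — and your verifications (that the clause is preserved under transitivity, that $g(J)$ lands in the right summand, and that $f(I)$ with $H\subseteq I$ remains directed, taking $p$ large enough that the witness $u$ lies in $U^{(p)}$ in the $V$-case) all go through by rerunning the Proposition~\ref{compstrict} argument, including the mutual-inverse computation $f(g(J))=J$, which you gloss but which holds since the clause for the final comparison $\langle F,m\rangle\sqsubset_i\langle H,p\rangle$ is inherited from comparisons already inside $J$. What your decomposition buys is modularity and a visibly simpler pair of clauses in place of the paper's more delicate condition (5); what the paper's single-relation format buys is one bijection onto $\I{\prec}$ on the nose, which Corollary~\ref{cor:extend_countable_closed} quotes verbatim in its equalizer construction — though your glued bijection would serve there equally well. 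Your parenthetical alternative (both $V$ and $A$ are effectively $\Pi^0_2$ in $P\omega$, so the effective form of Theorem 11 of the de Brecht--Pauly--Schr\"oder paper applies uniformly) is also sound, at the cost of being less self-contained than either explicit construction.
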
 
\begin{proof}
From Proposition~\ref{compstrict}, we can assume without loss of generality that $\prec$ is computable. We write $U^{(n)}$ for the subset of $U$ enumerated within $n$ steps by some fixed Turing machine. Let $\omega_* = \omega\cup\{*\}$, where $\{*\}$ is a symbol not in $\omega$. Define a c.e.\ transitive relation $\sqsubset$ on $(P_{fin}\omega_*)\times n$ as $\langle F,m\rangle \sqsubset \langle G,n\rangle$ if and only if the following all hold:
\begin{enumerate}
\item
$m<n$,
\item
$*\in F \Rightarrow *\in G$,
\item
$(\exists y\in G\setminus\{*\})(\forall x\in F\setminus\{*\})\, x \prec y$,
\item
$* \in G$ or $(\exists x\in U)\, x \in G$,
\item
$[*\in F \,\&\, y \in F\setminus \{*\}] \Rightarrow (\forall x\leq n)\,[x \in U^{(n)} \Rightarrow x \not\prec y]$.
\end{enumerate}
It is clear that $\sqsubset$ is c.e. The only non-trivial part of proving that $\sqsubset$ is transitive is verifying that $(5)$ holds, so we will show that here. Assume $\langle F,m\rangle \sqsubset \langle G,n\rangle \sqsubset \langle H,p\rangle$. Assume $*\in F$ and $y\in F\setminus\{*\}$ and that $x\leq p$ is in $U^{(p)}$. Then $(2)$ implies $*\in G$, and by $(3)$ there is $z\in G\setminus\{*\}$ with $y \prec z$, and by $(5)$ we have $x \not\prec z$. So if $x \prec y$ then transitivity would yield $x \prec z$, a contradiction. Therefore, $(5)$ holds, and we obtain $\langle F,m\rangle \sqsubset \langle H,p\rangle$.

Define $g\colon \I{\sqsubset} \to \I{\prec}$ as 
\[g(I) = \bigcup_{\langle F,m\rangle\in I} F\setminus\{*\}.\]
We first show that $g(I)$ is well-defined. $g(I)$ is non-empty because $(3)$ implies there is $\langle G,n\rangle \in I$ with some $y\in G\setminus\{*\}$. To show $g(I)$ is a lower set, assume $x \prec y \in g(I)$, and let $\langle F,m\rangle \in I$ be such that $y \in F\setminus\{*\}$. Since $I$ is directed, there is $\langle G,n\rangle \in I$ with $\langle F,m\rangle \sqsubset \langle G,n\rangle$. By $(3)$, there is $z\in G\setminus\{*\}$ with $y \prec z$, hence $x \prec z$ by transitivity of $\prec$. It follows that $\langle \{x\}, m \rangle \sqsubset \langle G, n\rangle$, which implies $\langle \{x\}, m \rangle\in I$ and therefore $x \in g(I)$. The proof that $g(I)$ is directed is identical to the proof that $g(I)$ is directed in Proposition~\ref{compstrict}.

It is clear that $g$ is computable. To see that $g$ is injective, assume $g(I) = g(J)$ and fix any $\langle F, m \rangle \in I$. This implies $F\setminus\{*\}\subseteq g(I) = g(J)$, hence there is a $\prec$-upper bound $y$ of $F\setminus\{*\}$ in $g(J)$. Then there is $\langle G, n\rangle \in J$ with $y \in G$, and by directedness we can assume $m < n$ and that $G$ satisfies $(4)$. We show $\langle F,m\rangle \sqsubset \langle G,n\rangle$. The criteria $(1)$, $(3)$, and $(4)$ are satisfied by the choice of $G$. To see that $(2)$ is satisfied, assume for a contradiction that $*\in F$ and $*\not\in G$. Since $G$ satisfies $(4)$, there must be $x \in U$ with $x\in G$, and by the directedness of $J$ there is $\langle H,p\rangle\in J$ with $\langle G,n\rangle\sqsubset \langle H,p\rangle$ and $x \leq p$ and $x\in U^{(p)}$. Then $(3)$ implies there is $z\in H$ with $x\prec z$, but then $(5)$ implies $\langle H,p\rangle$ can have no $\sqsubset$-upper bound in $J$, which contradicts the directedness of $J$. Therefore, $*\in F \Rightarrow *\in G$, hence $(2)$ is satisfied. Finally, if $*\in F$ and $F\setminus \{*\} \not=\emptyset$, then by directedness of $I$ we can find some $\langle H,p\rangle\in I$ with $\langle F,m\rangle \sqsubset \langle H,p\rangle$ and $n < p$, and it easily follows that $(5)$ holds. Therefore, $\langle F,m\rangle \sqsubset \langle G,n\rangle$, hence $\langle F,m\rangle \in J$ because $J$ is a lower set. The proof that $J\subseteq I$ is identical, hence $g$ is injective.

To see that $g$ is surjective, fix $J\in \I{\prec}$. We first consider the case that $J\in A$. Define 
\[I = \{ \langle F, m\rangle \mid F\subseteq J\cup\{*\} \text{ is finite} \,\&\, m\in\omega\}.\]
Then $I$ is clearly non-empty, and it is a lower set because if $\langle F,m\rangle \sqsubset \langle G,n\rangle \in I$, then $(3)$ implies $G$ contains a $\prec$-upper bound of $F$, hence $F\subseteq J$ because $J$ is a lower set. To see that $I$ is directed, assume $\langle F,m\rangle, \langle G,n\rangle \in I$, and let $y\in J$ be a $\prec$-upper bound of $(F\cup G)\setminus\{*\}$. Set $H = \{y, *\}$ and $p = m+n+1$. Then $\langle H,p\rangle \in I$ is a $\sqsubset$-upper bound of  $\langle F,m\rangle$ and $\langle G,n\rangle$. Thus $I\in \I{\sqsubset}$ and clearly $g(I)= J$. The case when $J\not\in A$ is similar, except we define
\[I = \{ \langle F, m\rangle \mid F\subseteq J \text{ is finite} \,\&\, m\in\omega\},\]
and when proving that $I$ is directed, we use the fact that there exists $x \in U\cap I$ when constructing a $\sqsubset$-upper bound of a pair of elements of $J$  so that it satisfies $(4)$. This completes the proof that $g$ is a computable bijection.

Finally, it is clear that every basic open subset of $\I{\sqsubset}$ is equal to $g^{-1}(O)$ or to $g^{-1}(A \cap O)$, where $O$ is an open subset of $\I{\prec}$.  Therefore, $\I{\sqsubset}$ is computably homeomorphic to the space obtained by adding $A$ as a c.e.\ open set to the topology of $\I{\prec}$. 
\end{proof}

Using the construction of countable products and equalizers described in \cite{br}, we can easily generalize the above theorem to handle c.e. sequences of co-c.e. closed sets.

\begin{corollary}\label{cor:extend_countable_closed}
Given a c.e.\ transitive relation $\prec$ on $\omega$ and a c.e.\ sequence  $(A_i)_{i\in\omega}$ of co-c.e. closed subsets of $\I{\prec}$, one can compute a c.e.\ transitive relation $\sqsubset$ on $\omega$ such that $\I{\sqsubset}$ is computably homeomorphic to the space obtained by adding each $A_i$ ($i\in\omega$) as a c.e.\ open set to the topology of $\I{\prec}$. 
\end{corollary}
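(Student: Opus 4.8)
The plan is to realise the simultaneous refinement as a single limit of the one-set refinements produced by the preceding Theorem, using the countable products and equalizers available in the category of (effective) quasi-Polish spaces. First I would put each $A_i$ into the form treated by that Theorem. A co-c.e.\ closed subset of $\I{\prec}$ is exactly the complement of a c.e.\ open set, and every c.e.\ open subset of $\I{\prec}$ has the form $\bigcup_{n\in U_i}[n]_\prec$ for a c.e.\ set $U_i$; since the sequence $(A_i)$ is c.e., the sets $U_i$ may be taken uniformly c.e.\ in $i$. Hence $A_i=\{I\in\I{\prec}\mid(\forall x\in U_i)\,x\notin I\}$, which is precisely the shape of the closed set $A$ in the Theorem. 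Applying the Theorem uniformly (its proof is uniform in $\prec$ and $U$) yields, computably in $i$, a c.e.\ transitive relation $\sqsubset_i$ together with a computable homeomorphism $\I{\sqsubset_i}\simeq_e(\I{\prec},\tau_i)$, where $\tau_i$ denotes the topology of $\I{\prec}$ refined by $A_i$. Composing with the continuous, computable identity $(\I{\prec},\tau_i)\to\I{\prec}$ gives computable continuous bijections $p_i\colon\I{\sqsubset_i}\to\I{\prec}$.

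Next I would form the countable product $\prod_{i}\I{\sqsubset_i}$, which is EQP and, by the effective product construction of \cite{br}, carries a c.e.\ transitive presentation computable from the sequence $(\sqsubset_i)$. Inside it I take the equalizer of the family $(p_i)_i$, namely $E=\{(I_i)_i\mid p_i(I_i)=p_j(I_j)\text{ for all }i,j\}$. Since each $p_i$ is computable, each condition $n\in p_i(I_i)$ is c.e.\ open in the product, so the equality $p_i(I_i)=p_j(I_j)$, equivalently $(\forall n)(n\in p_i(I_i)\leftrightarrow n\in p_j(I_j))$ as $\I{\prec}$ is $T_0$, is a $\Pi^0_2$ condition; hence $E$ is a $\Pi^0_2$ subspace. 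Being a $\Pi^0_2$ subspace of an EQP space, $E$ is EQP, and by Theorem 11 of \cite{br1} it is computably homeomorphic to $\I{\sqsubset}$ for some c.e.\ transitive relation $\sqsubset$ that can be computed from an index for $\prec$ and for the sequence $(A_i)$.

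It then remains to check that $E$ is computably homeomorphic to the desired space. Because every $p_i$ is a bijection on underlying sets, a tuple in $E$ is determined by its common image $J\in\I{\prec}$, via $I_i=p_i^{-1}(J)$; thus $J\mapsto(p_i^{-1}(J))_i$ is a computable bijection between $\I{\prec}$ and $E$. Under this identification the subspace topology of $E$ is the join of the pullbacks along the $p_i$ of the topologies $\tau_i$, which is exactly the topology $\tau$ of $\I{\prec}$ refined simultaneously by all the $A_i$. This yields the required computable homeomorphism between $\I{\sqsubset}$ and the space obtained by adding each $A_i$ as a c.e.\ open set.

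The main obstacle I expect is bookkeeping rather than a new idea: one must verify that every step is genuinely effective and uniform in the given indices, namely the uniform application of the Theorem, the product construction, the passage from the $\Pi^0_2$ equalizer to a c.e.\ transitive relation via the effective form of Theorem 11 of \cite{br1}, and the identification of the equalizer's subspace topology with the simultaneous refinement. Once the categorical picture (simultaneous refinement as the equalizer of the single refinements inside the product) is in place, each of these verifications reduces to the constructions already cited from \cite{br} and to the preceding Theorem, which is why the generalization is routine.
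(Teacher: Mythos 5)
Your proposal is correct and takes essentially the same route as the paper: the paper likewise applies the preceding theorem uniformly to obtain relations $\sqsubset_i$ with computable bijections $g_i\colon \I{\sqsubset_i}\to\I{\prec}$, forms the effective countable product, and lets $\sqsubset$ encode the equalizer of $(x_i)_{i\in\omega}\mapsto(g_i(x_i))_{i\in\omega}$ and $(x_i)_{i\in\omega}\mapsto(g_0(x_0))^\omega$, which is exactly the diagonal set $E$ you construct. The only cosmetic difference is that the paper cites the explicit product and equalizer constructions of \cite{br} to produce the c.e.\ transitive relation directly, whereas you pass through the observation that $E$ is a $\Pi^0_2$ subspace and then invoke the effective ideal characterization of EQP spaces; these are effectively equivalent.
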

\begin{proof}
Let $\sqsubset_i$ be the c.e. transitive relation obtained from the previous theorem for the space with $A_i$ joined to the topology of $\I{\prec}$, let $\sqsubset'$ be a c.e. encoding for the product of these spaces, and let $\sqsubset$ be a c.e. encoding for the diagonal of $\I{\sqsubset'}$, obtained as the equalizer of the computable functions $(x_i)_{i\in\omega}\mapsto (g_i(x_i))_{i\in\omega}$ and $(x_i)_{i\in\omega}\mapsto (g_0(x_0))^\omega$, where $g_i$ is the bijection from $\I{\sqsubset_i}$ to $\I{\prec}$ used in the proof of the above theorem. Then $\sqsubset$ satisfies the corollary.
\end{proof}


\section{On ideal presentations of effective domains}\label{dom}

Here we establish ideal characterizations of some effective versions of domains. Let us first  recall definitions of two classes of effective domains (one of which is known while the other recently introduced in \cite{s20}). We warn the reader that the adjectives ``computable'' and ``c.e.'' are sometimes used in the literature inconsistently. 

\begin{definition}\label{effdomain}
By a {\em computable  domain}  we mean a
pair $(X,b)$ where $X$ is an $\omega$-continuous domain and $b:\omega\to X$ is a numbering of a domain base in $X$ such that the relation $b_i\ll_Xb_j$ is computable, where $\ll_X$ is the approximation relation on $X$.
The notion of c.e.\ domain is obtained by using ``c.e.'' instead of ``computable''.
\end{definition}

For  algebraic domains, we have at least three natural versions of effectiveness.

\begin{definition}\label{effalg}
\begin{enumerate}
\item By a {\em computable algebraic domain}  we mean a
pair $(X,c)$ where $X$ is an $\omega$-algebraic domain and $c:\omega\to X$ is a numbering of the compact elements in $X$ such that the relation $c_i\leq_Xc_j$ is computable, where $\leq_X$ is the specialization order on $X$.
\item By a {\em strongly c.e.\ algebraic domain}  we mean either a finite domain or a
pair $(X,c)$ where $X$ is an $\omega$-algebraic domain and $c:\omega\to X$ is a bijective numbering of the compact elements in $X$ such that the relation $c_i\leq_Xc_j$ is c.e.
\item By a {\em c.e.\ algebraic domain}  we mean a
pair $(X,c)$ where $X$ is an $\omega$-algebraic domain and $c:\omega\to X$ is a numbering of the compact elements in $X$ such that the relation $c_i\leq_Xc_j$ is c.e.
\end{enumerate}
\end{definition}

Notions (1) and (2) where introduced in \cite{s20} while notion (3) is sometimes met in the literature under the name ``computable $\omega$-algebraic domain''. It is easy to see that any computable algebraic domain is strongly c.e.\ and any strongly c.e.\ algebraic domain is c.e.
The next result is a reformulation of some well known facts of domain theory \cite{aj}. It was announced without a proof in \cite{s20} as Proposition 2; here we provide a proof, for the sake of completeness.

\begin{proposition}\label{idealdom}
\begin{enumerate}
\item A topological space is an $\omega$-continuous domain iff it is homeomorphic to $\I{\prec}$ for some transitive interpolable relation $\prec$ on $\omega$. 
\item A topological space is an $\omega$-algebraic domain iff it is homeomorphic to $\I{\sqsubseteq}$ for some preorder $\sqsubseteq$ on $\omega$.
\item An infinite topological space is an $\omega$-algebraic domain iff it is homeomorphic to $\I{\sqsubseteq}$ for some partial order $\sqsubseteq$ on $\omega$.
\end{enumerate}
\end{proposition}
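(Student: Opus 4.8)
The plan is to recognize all three statements as the concrete, ``ideal-space'' rendering of two classical constructions of domain theory: the \emph{round ideal completion} of an abstract basis (for part (1)) and the \emph{ideal completion} of a preorder (for parts (2) and (3)); see \cite{aj}. Throughout I would first record that the specialization order on $\I{\prec}$ is exactly inclusion: since the basic open sets are the $[n]_\prec=\{I\mid n\in I\}$, we have $I\leq J$ iff every basic neighbourhood of $I$ contains $J$ iff $I\subseteq J$. Consequently all order-theoretic data (directed completeness, the approximation relation $\ll$, compactness) can be read off from $\subseteq$, and the only genuinely topological point to verify each time is that the topology generated by the $[n]_\prec$ coincides with the Scott topology of the resulting dcpo.

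For the backward direction of (1) I would start from a transitive interpolable $\prec$ and observe that $(\omega,\prec)$ is an abstract basis in the sense of \cite{aj}: transitivity together with directedness of each initial segment $\{x\mid x\prec y\}$ is precisely the interpolation property. The ideals of Definition~1 are exactly the round ideals of this basis (condition (3) unpacks into ``directed and round''), so $\I{\prec}$ ordered by $\subseteq$ is directed complete, with directed suprema given by unions; the only point to check is that a directed union of round ideals is again round. To exhibit the continuous structure I would use the principal sets $d_n=\{x\mid x\prec n\}$, which are ideals by interpolability. The easy implication $n\in J\Rightarrow d_n\ll_{\I{\prec}}J$, together with the identity $J=\bigcup^\uparrow_{m\in J}d_m$, already shows that $\{d_n\mid n\in\omega\}$ is a countable domain base, and the directedness of $\{b\mid b\ll_{\I{\prec}}J\}$ then makes $\I{\prec}$ an $\omega$-continuous domain. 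For the forward direction I would fix a countable domain base $B=\{b_0,b_1,\dots\}$ of an $\omega$-continuous domain $X$, set $i\prec j\iff b_i\ll_X b_j$, and use the transitivity and the interpolation property of $\ll_X$ on a base to see that $\prec$ is transitive and interpolable; the map $x\mapsto\{i\mid b_i\ll_X x\}$ is then the required homeomorphism onto $\I{\prec}$, this being the standard fact that a continuous domain is recovered as the round ideal completion of any of its bases.

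Parts (2) and (3) I would treat as the reflexive specialization of (1). When $\sqsubseteq$ is a preorder the relation is reflexive, so each $d_n=\{x\mid x\sqsubseteq n\}$ contains $n$ and is a compact element of $\I{\sqsubseteq}$ (indeed $d_n\ll_{\I{\sqsubseteq}}d_n$ since $n\in d_n$); these principal ideals are exactly the compact elements and form a base, so $\I{\sqsubseteq}$ is the ideal completion of $(\omega,\sqsubseteq)$ and hence an $\omega$-algebraic domain. Conversely, given an $\omega$-algebraic domain $X$, the countable set $K(X)$ of compact elements carries the specialization order $\leq_X$ as a partial order, and $x\mapsto\{c\in K(X)\mid c\leq_X x\}$ identifies $X$ with the ideal completion of $(K(X),\leq_X)$. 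Choosing any surjective numbering $c\colon\omega\to K(X)$ (which may repeat values, in particular when $K(X)$ is finite) and setting $i\sqsubseteq j\iff c_i\leq_X c_j$ yields a preorder on $\omega$ with $\I{\sqsubseteq}\simeq X$, proving (2).

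For (3) the extra input is a finiteness bookkeeping. If $\sqsubseteq$ is a \emph{partial order}, then antisymmetry forces the principal ideals $d_m$ to be pairwise distinct, so $\I{\sqsubseteq}$ has infinitely many compact elements and is therefore infinite. In the other direction, an infinite $\omega$-algebraic domain must have infinitely many compact elements: were $K(X)$ finite, then every element, being a directed supremum of the compacts below it, would be the maximum of a finite set of compacts and hence itself compact, forcing $X=K(X)$ to be finite. Thus $(K(X),\leq_X)$ is a countably infinite poset, and enumerating it \emph{bijectively} by $\omega$ turns $\leq_X$ into a genuine partial order $\sqsubseteq$ on $\omega$ with $X\simeq\I{\sqsubseteq}$. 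The main obstacle I anticipate is not any single estimate but the careful matching of the abstractly constructed Scott topology with the topology generated by the $[n]_\prec$, together with the way-below computation underlying $\{d_n\mid n\in\omega\}$ being a base (and its subtlety when $n\not\prec n$ in the non-reflexive case of (1)); once these identifications are in place, the algebraic cases and the finiteness argument distinguishing (2) from (3) follow cleanly.
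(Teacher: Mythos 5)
Your proposal is correct and takes essentially the same route as the paper's proof: for (1) both rely on the round ideal completion of an abstract basis (the paper simply cites Proposition 2.2.22 of \cite{aj}, where you re-derive it via the principal ideals $d_n$) together with the same homeomorphism $x\mapsto\{i\mid b_i\ll_X x\}$, and for (2),(3) both identify the compact elements of $\I{\sqsubseteq}$ with the principal ideals and use $x\mapsto\{i\mid c_i\leq_X x\}$. Your finiteness bookkeeping for (3) (bijective numbering, and infinitely many compacts iff the domain is infinite) just spells out what the paper compresses into ``similarly to (2)''.
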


\begin{proof}
(1) Let $\prec$ be a transitive interpolable relation  on $\omega$. It is well known (see e.g. Proposition 2.2.22 in \cite{aj}) that $\I{\prec}$ is an $\omega$-continuous domain. Conversely, let $X$ be an $\omega$-continuous domain and $b:\omega\to X$ be a numbering of a domain basis. Define the relation $\prec$ on $\omega$ by: $i\prec j$, if $b_i\ll_Xb_j$. Then $\prec$ is a transitive interpolable relation on $\omega$, and $X$ is homeomorphic to $\I{\prec}$ via $f(x)=\{i\mid b_i\ll_Xx\}$.

(2) Let $\sqsubseteq$ be a preorder on $\omega$. It is well known  that $\I{\sqsubseteq}$ is an $\omega$-algebraic domain the compact elements of which are the principal ideals   (note also that $\I{\sqsubseteq}$ is homeomorphic to $\I{\omega^*;\sqsubseteq^*}$, where $(\omega^*;\sqsubseteq^*)$ is the quotient-order of $(\om,\sqsubseteq)$). Conversely, let $X$ be an $\omega$-algebraic domain and $c:\omega\to X$ be a numbering of the compact elements. Define the preorder $\sqsubseteq$ on $\omega$ by: $i\sqsubseteq j$, if $c_i\leq_Xc_j$. Then  $X$ is homeomorphic to $\I{\sqsubseteq}$ via $f(x)=\{i\mid c_i\leq_Xx\}$.

(3) Similarly to (2).
 \end{proof}

We proceed with  effective versions of the above proposition for algebraic domains (for continuous domains such a direct effectivization is probably not known); see also Section \ref{categ}. The next fact was stated in \cite{s20} without proof.

\begin{proposition}\label{idealefalg}
\begin{enumerate}
\item An effective space is a computable (resp.  c.e.) algebraic domain iff it is effectively homeomorphic to $\I{\sqsubseteq}$ for some computable (resp.  c.e.) preorder $\sqsubseteq$ on $\omega$.
\item An infinite  effective space is a computable (resp. strongly c.e.) algebraic domain iff it is  effectively homeomorphic to $\I{\sqsubseteq}$ for some computable (resp.  c.e.) partial order $\sqsubseteq$ on $\omega$.
\end{enumerate}
\end{proposition}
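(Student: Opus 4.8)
The plan is to establish \emph{Proposition~\ref{idealefalg}} by effectivising the proof of the purely topological \emph{Proposition~\ref{idealdom}(2),(3)}, carefully tracking which effectivity conditions are preserved in each direction. The two parts have the same structure, so I would treat them in parallel, noting where the bijectivity requirement of \emph{strongly c.e.} domains forces extra care.

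\medskip

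For the forward direction of part~(1), suppose $(X,c)$ is a computable (resp.~c.e.) algebraic domain. Define the relation $\sqsubseteq$ on $\omega$ by $i\sqsubseteq j \iff c_i\leq_X c_j$. By \emph{Definition~\ref{effalg}} this relation is computable (resp.~c.e.), and since $\leq_X$ is a partial order on $X$ while $c$ need not be injective, $\sqsubseteq$ is reflexive and transitive, hence a preorder of the required complexity. The map $f(x)=\{i\mid c_i\leq_X x\}$ is the homeomorphism from \emph{Proposition~\ref{idealdom}(2)}, and I would check that $f$ and its inverse are computable with respect to the base numberings: $f(x)\in[n]_\sqsubseteq$ iff $c_n\leq_X x$ iff $x\in\uparrow c_n$, and the sets $\uparrow c_n$ are exactly the basic opens of $X$, so $f$ pulls back and pushes forward basic opens effectively. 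For the converse, given a computable (resp.~c.e.) preorder $\sqsubseteq$, the space $\I{\sqsubseteq}$ is an $\omega$-algebraic domain by \emph{Proposition~\ref{idealdom}(2)} whose compact elements are the principal ideals; letting $c_n$ be the principal ideal generated by $n$, the relation $c_m\leq c_n \iff m\sqsubseteq n$ inherits the complexity of $\sqsubseteq$, giving a computable (resp.~c.e.) algebraic domain structure, and this is effectively homeomorphic to $\I{\sqsubseteq}$ via the identity on indices.

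\medskip

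Part~(2) is the same argument restricted to \emph{infinite} spaces and to \emph{partial orders} in place of preorders; the computable case is immediate, but the strongly c.e.\ case is where the real work lies. Here the numbering $c$ must be \emph{bijective}, which means the antisymmetric relation $\sqsubseteq$ defined by $c_i\leq_X c_j$ is a genuine c.e.\ partial order (antisymmetry follows from injectivity of $c$ together with antisymmetry of $\leq_X$), and conversely a c.e.\ partial order has each equivalence class a singleton, so the principal-ideal numbering is automatically injective. The main obstacle I anticipate is the converse of the strongly c.e.\ case: starting from an arbitrary c.e.\ partial order $\sqsubseteq$ on $\omega$, I must produce a \emph{bijective} numbering of the compact elements of $\I{\sqsubseteq}$. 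The compact elements are the principal ideals $\{j\mid j\sqsubseteq i\}$, and two indices $i,i'$ generate the same principal ideal precisely when $i\sqsubseteq i'$ and $i'\sqsubseteq i$, i.e.~when $i=i'$ since $\sqsubseteq$ is antisymmetric; thus the map $i\mapsto \mathord{\downarrow}i$ is already a bijection from $\omega$ onto the compact elements, and the infinitude hypothesis guarantees $\omega$ is the right index set rather than a finite one. I would spell out that $\leq_X$ on these compact elements is c.e.\ uniformly, completing the equivalence, and finally remark that the finite-domain escape clause in the definition of strongly c.e.\ is exactly what the infinitude assumption lets us discard.
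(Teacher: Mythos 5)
Your overall route coincides with the paper's proof: pass from the numbering $c$ to the relation $i\sqsubseteq j\iff c_i\leq_X c_j$, and back via the principal ideals $c_i=\mathord{\downarrow}i$, with the homeomorphism $f(x)=\{i\mid c_i\leq_X x\}$. Part (1) and the strongly c.e.\ half of part (2) --- including your observation that antisymmetry makes $i\mapsto\mathord{\downarrow}i$ a bijection onto the compact elements, so the ``main obstacle'' you anticipated is in fact a one-liner --- are correct and match the paper. However, there is one genuine gap, and it sits exactly where you wrote ``the computable case is immediate.'' In part (2), the forward direction for the computable case starts from an infinite computable algebraic domain $(X,c')$ in the sense of Definition~\ref{effalg}(1), whose numbering $c'$ of $K(X)$ is \emph{not} required to be injective. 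Hence the relation $i\sqsubseteq j\iff c'_i\leq_X c'_j$ is in general only a computable \emph{preorder}: if $c'_i=c'_j$ with $i\neq j$, antisymmetry fails, and you have not produced the computable partial order that the statement demands. Your antisymmetry argument (``injectivity of $c$ together with antisymmetry of $\leq_X$'') is available only in the strongly c.e.\ case, where bijectivity is built into Definition~\ref{effalg}(2); in the computable case it must be manufactured.

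The repair --- and the one step where the paper does real work in part (2) --- is to first replace $c'$ by an injective numbering $c$ of $K(X)$ that preserves computability of the relation $c_i\leq_X c_j$. This is possible precisely because the preorder is computable: the induced equivalence $i\equiv j\iff(i\sqsubseteq j\,\wedge\,j\sqsubseteq i)$ is decidable, so one can computably list the least representatives $r_0<r_1<\cdots$ of the equivalence classes ($i$ is least in its class iff $\neg(i\equiv j)$ for all $j<i$), and the infinitude of $K(X)$ guarantees this list is infinite; setting $c_k=c'_{r_k}$ gives the required injective numbering, after which your argument goes through unchanged. Note that this is also where the asymmetry in the statement of Proposition~\ref{idealefalg} originates: for a merely c.e.\ preorder the equivalence $\equiv$ is c.e.\ but not co-c.e., so representatives cannot be selected effectively; this is why c.e.\ algebraic domains pair with c.e.\ \emph{preorders} in part (1), while only the strongly c.e.\ ones (bijectivity assumed by definition) pair with c.e.\ \emph{partial orders} in part (2) --- compare Propositions~\ref{cepre} and~\ref{separalg}(2), which show these classes genuinely differ.
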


\begin{proof}
(1) Let $\sqsubseteq$ be a computable preorder  on $\omega$. Then $(\I{\sqsubseteq};c)$, where $c_i=\downarrow i=\{j\mid j\sqsubseteq i\}$, is a computable domain because $c_i\subseteq c_j$ iff $i\sqsubseteq j$. Conversely, let $(X,c)$ be a computable algebraic domain. Define the computable preorder $\sqsubseteq$ on $\omega$ by: $i\sqsubseteq j$, if $c_i\leq_Xc_j$. Then $\sqsubseteq$ has the desired properties. 
The same argument works for the c.e.\ case.

(2) Let $\sqsubseteq$ be a computable partial order on $\omega$. Then $(\I{\sqsubseteq},c)$, where $c$ is as in item (1), is an infinite computable domain. Conversely, let $(X,c')$ be an infinite computable algebraic domain. Since $K(X)$ is infinite, it is straightforward to modify $c'$ and obtain an injective numbering $c:\omega\to X$ of the compact elements, preserving the computability of the relation $c_i\leq_Xc_j$. Then proceed as above.
A similar argument works for the strong c.e.\ case.
\end{proof}

Although we assume the Scott topology on $\omega$-algebraic domains in this paper, the Lawson topology also has useful applications (see \cite{gh03}). The Lawson topology on a domain $X$ refines the Scott topology on $X$ by adding all sets of the form $X\setminus \uparrow x$ as open sets. For algebraic domains, it suffices to only add sets of the form $X\setminus \uparrow x$ for $x\in K(X)$ to obtain the Lawson topology (see the hint after Exercise~III-1.14 in \cite{gh03}). Using Corollary~\ref{cor:extend_countable_closed} and Proposition~\ref{idealefalg}, we obtain the following partial effectivization of the known fact that an $\omega$-algebraic domain with the Lawson topology is a (zero-dimensional) Polish space.

\begin{corollary}
If $X$ is a computable (or c.e.) algebraic domain, then $X$ with the Lawson topology is an effective quasi-Polish space.
\qed
\end{corollary}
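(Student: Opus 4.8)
The plan is to combine the two results cited just before the statement. Proposition~\ref{idealefalg} gives an ideal presentation of the algebraic domain $X$: if $X$ is a computable (resp.\ c.e.) algebraic domain, then $X$ is effectively homeomorphic to $\I{\sqsubseteq}$ for some computable (resp.\ c.e.) preorder $\sqsubseteq$ on $\omega$. The compact elements of $\I{\sqsubseteq}$ are exactly the principal ideals $\mathord{\downarrow}i = \{j\mid j\sqsubseteq i\}$, so the Lawson topology on $X$ is obtained from the Scott topology by adjoining the sets $\I{\sqsubseteq}\setminus\mathord{\uparrow}(\mathord{\downarrow}i)$ for $i\in\omega$. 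The first step is therefore to rewrite each such set as a basic closed (equivalently, co-c.e.\ closed) subset of $\I{\sqsubseteq}$ in a way that is uniform in $i$, so that Corollary~\ref{cor:extend_countable_closed} can be applied.

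The key observation is that $\mathord{\uparrow}(\mathord{\downarrow}i)=\{I\in\I{\sqsubseteq}\mid i\in I\}=[i]_{\sqsubseteq}$, since $\mathord{\downarrow}i\leq I$ in the specialization order precisely when $i\in I$. Hence $\I{\sqsubseteq}\setminus\mathord{\uparrow}(\mathord{\downarrow}i)$ is the complement of the basic open set $[i]_{\sqsubseteq}$, i.e.\ a basic closed set. Its complement $[i]_{\sqsubseteq}$ is c.e.\ open uniformly in $i$ (in the computable case even decidable), so the family $A_i = \I{\sqsubseteq}\setminus[i]_{\sqsubseteq}$ is a c.e.\ sequence of co-c.e.\ closed subsets of $\I{\sqsubseteq}$. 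This is exactly the input required by Corollary~\ref{cor:extend_countable_closed}.

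The second step is to invoke Corollary~\ref{cor:extend_countable_closed} with $\prec$ taken to be $\sqsubseteq$ and the sequence $(A_i)_{i\in\omega}$ just described. The corollary produces a c.e.\ transitive relation $\sqsubset$ on $\omega$ such that $\I{\sqsubset}$ is computably homeomorphic to the space obtained by adding every $A_i$ as a c.e.\ open set to $\I{\sqsubseteq}$. By the preceding paragraph this augmented topology is precisely the Lawson topology on $X\simeq_e\I{\sqsubseteq}$. Since $\I{\sqsubset}$ is by definition an EQP-space (being the space of ideals of a c.e.\ transitive relation, by Theorem~11 of \cite{br1} recalled in Section~\ref{prel}), it follows that $X$ with the Lawson topology is effectively homeomorphic to an EQP-space and hence is itself an EQP-space.

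The only point requiring genuine care—and the step I expect to be the main obstacle—is the identification $\mathord{\uparrow}(\mathord{\downarrow}i)=[i]_{\sqsubseteq}$ together with the claim that adjoining the $A_i$ really yields the Lawson topology rather than something finer or coarser. One must verify that the compact elements of $\I{\sqsubseteq}$ are exactly the principal ideals (which is the content of part~(2) of Proposition~\ref{idealdom}) and that the cited fact after Exercise~III-1.14 in \cite{gh03} licenses restricting to $x\in K(X)$; together these guarantee that the sets $X\setminus\mathord{\uparrow}x$ for $x\in K(X)$ suffice to generate the Lawson topology, so no open sets are missing. Everything else is bookkeeping: the uniformity of the enumeration of $[i]_{\sqsubseteq}$ is immediate from the c.e.-ness of $\sqsubseteq$, and Corollary~\ref{cor:extend_countable_closed} supplies the effective transfer to $\I{\sqsubset}$.
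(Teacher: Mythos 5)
Your proof is correct and follows exactly the route the paper intends: the corollary is stated as an immediate consequence of Proposition~\ref{idealefalg} and Corollary~\ref{cor:extend_countable_closed}, using the identification of compact elements of $\I{\sqsubseteq}$ with principal ideals and the fact (hint after Exercise~III-1.14 in \cite{gh03}) that the sets $X\setminus\mathord{\uparrow}x$ for $x\in K(X)$ suffice to generate the Lawson topology. Your observation that $\mathord{\uparrow}(\mathord{\downarrow}i)=[i]_{\sqsubseteq}$, making each $A_i$ a uniformly co-c.e.\ closed set, is precisely the bookkeeping the paper leaves implicit.
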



Next we prove an effective version of the following domain-characterization of quasi-Polish spaces established in \cite{br}: a space is quasi-Polish iff it is homeomorphic to the space of non-compact elements of an $\omega$-algebraic (equivalently, of an $\omega$-continuous) domain.

\begin{theorem}\label{idealefdom}
For an effective space $(X,\xi)$ the following are equivalent:
\begin{enumerate}
\item $(X,\xi)$ is an effective quasi-Polish space. 
\item $(X,\xi)$ is computably homeomorphic to the space of non-compact elements of a computable algebraic domain.
\item $(X,\xi)$ is computably homeomorphic to the space of non-compact elements of a computable domain.
\end{enumerate}
\end{theorem}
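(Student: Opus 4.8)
The plan is to run the cycle $(1)\Rightarrow(2)\Rightarrow(3)\Rightarrow(1)$, leaning on the ideal presentations already set up and concentrating the real work in the last implication. For $(1)\Rightarrow(2)$, I would begin with the ideal characterisation of EQP-spaces (Theorem 11 of \cite{br1}): $(X,\xi)$ is computably homeomorphic to $\I{\prec}$ for some c.e.\ transitive relation $\prec$. Proposition~\ref{comppo} then yields a computable partial order $\sqsubseteq$ for which $\I{\prec}$ is computably homeomorphic to the subspace of non-principal ideals of $\I{\sqsubseteq}$. By Proposition~\ref{idealdom}(2) the compact elements of $\I{\sqsubseteq}$ are exactly the principal ideals, so this subspace is precisely the space of non-compact elements of $\I{\sqsubseteq}$; and by Proposition~\ref{idealefalg}(1) the pair $(\I{\sqsubseteq},c)$ with $c_i=\downarrow i$ is a computable algebraic domain. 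Composing the computable homeomorphisms gives $(2)$.

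The implication $(2)\Rightarrow(3)$ is essentially definitional. Given a computable algebraic domain $(X,c)$, the set $K(X)$ is a domain basis, every $\omega$-algebraic domain is $\omega$-continuous, and for compact elements $c_i\ll_Xc_j$ holds iff $c_i\leq_Xc_j$; hence $(X,c)$ is already a computable domain with the same underlying space, and its space of non-compact elements coincides with the one produced in $(2)$.

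The core of the argument is $(3)\Rightarrow(1)$. Let $(D,b)$ be a computable domain with space of non-compact elements $N$. Since every computable $\omega$-continuous domain is a c.e.\ EQP-space, $D$ is effectively homeomorphic to a $\Pi^0_2$-subspace of $P\omega$, and because the pointclass $\Pi^0_2$ is closed under intersection it suffices to show that $N$ is, uniformly effectively, a $\Pi^0_2$-subset of $D$. Writing $U_i=\{x\mid b_i\ll_D x\}$ for the basic open sets, the key observation is that $x$ is compact iff $\exists i\,(b_i\ll_D x$ and $x\leq_D b_i)$: this uses that $x$ is the directed supremum of $\{b_i\mid b_i\ll_D x\}$ together with the implication $a\leq_D b\ll_D c\Rightarrow a\ll_D c$. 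Now ``$b_i\ll_D x$'' is the $\Sigma^0_1$ condition $x\in U_i$, while ``$x\leq_D b_i$'' is equivalent to $\forall j\,(x\in U_j\Rightarrow b_j\ll_D b_i)$, which, since $b_j\ll_D b_i$ is computable, is a $\Pi^0_1$ condition uniformly in $i$. Thus compactness is $\exists i$ of a conjunction of a $\Sigma^0_1$ and a $\Pi^0_1$ predicate, hence $\Sigma^0_2$, so $N$ is $\Pi^0_2$ in $D$ and therefore EQP; since EQP-spaces are closed under $\simeq_e$, so is $(X,\xi)$.

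The main obstacle is exactly this complexity computation. Because the basis $b$ need not enumerate the compact elements, compactness cannot be detected by recognising a largest element of the corresponding ideal; instead it must be read off the way-below relation on the basis, and the delicate point is reducing ``$x\leq_D b_i$'' to a co-c.e.\ condition on the basic neighbourhoods of $x$, which is what makes the whole predicate stay within $\Sigma^0_2$.
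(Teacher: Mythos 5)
Your proposal is correct and follows essentially the same route as the paper: (1)$\Rightarrow$(2) via the ideal characterisation together with Propositions \ref{compstrict}/\ref{comppo} and \ref{idealefalg}, a trivial (2)$\Rightarrow$(3), and for (3)$\Rightarrow$(1) the same $\Sigma^0_2$ computation of the set of compact elements. Your predicate $\exists i\,(b_i\ll_D x \wedge x\leq_D b_i)$, with $x\leq_D b_i$ unfolded as $\forall j\,(x\in U_j\Rightarrow b_j\ll_D b_i)$, is exactly the paper's decomposition $K(Y)=\bigcup_{i}\bigl(\{y\mid b_i\leq_Y y\}\setminus\{y\mid y\not\leq_Y b_i\}\bigr)$ in predicate form, using the same computability of $\ll$ on the basis.
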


\begin{proof}
(1)$\to$(2). By  Proposition \ref{compstrict}, $X$ is computably homeomorphic to $\I{\prec}$ for some computable strict partial order $\prec$ on $\omega$. By Proposition \ref{comppo}, $X$ is computably homeomorphic to the subspace of non-principal ideals in $\I{\sqsubseteq}$, for some computable partial order $\sqsubseteq$ on $\omega$. By Proposition \ref{idealefalg}(1), $\I{\sqsubseteq}$ is a computable algebraic domain. 

The implication (2)$\to$(3) is obvious since every computable algebraic domain is a computable domain.

(3)$\to$(1). Let $(Y,b)$ be a computable domain and let $X$ be  computably homeomorphic to $Y\setminus K(Y)$. Since $Y$ is EQP, it suffices to show that $K(Y)\in\Sigma^0_2(Y)$ \cite{br1,hoy}. The set $C=\{i\mid b_i\ll_Yb_i\}$ is computable and $K(Y)=\{b_i\mid i\in C\}$, hence it suffices to check that $\{b_i\}\in\Sigma^0_2(Y)$ uniformly on $i\in C$. We have
 $$\{b_i\}=\{y\mid b_i\leq_Yy\}\setminus\{y\mid y\not\leq_Yb_i\}=\{y\mid b_i\leq_Yy\}\setminus\bigcup_{b_j\not\prec_Y b_i} \{y\mid b_j\prec_Y y\}.$$
 This is because $y\leq_Yb_i$ iff, for any $j$, $b_j\ll_Yy$ implies $b_j\ll_Yb_i$.
 Since the relation $b_j\not\ll_Y b_i$ is computable, the desired estimate follows.
 \end{proof}


Next we show that  the  effective versions of algebraic domains introduced above are non-equivalent. For this we use Theorem 2.1 in \cite{chol} cited in Section \ref{prel}, and Theorem 1 in \cite{s20} equivalent to following fact about c.e.\ preorders on $\omega$.

\begin{proposition}\label{cepre}
There is a c.e.\ preorder $\sqsubseteq$ on $\omega$ whose quotient-order $(\omega^*;\sqsubseteq^*)$ is infinite and not isomorphic to any c.e.\ partial order on $\omega$. 
\end{proposition}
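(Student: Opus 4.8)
The plan is to reduce the statement to the order-theoretic separation supplied by Theorem 2.1 of \cite{chol}. That theorem yields a co-c.e.\ partial order $(\omega;\le_P)$ that is not isomorphic to any c.e.\ partial order on $\omega$; since every finite partial order is computable, hence c.e., such a $P$ is automatically infinite. The key reduction step is then the following realization lemma: \emph{every co-c.e.\ partial order is isomorphic to the quotient order of some c.e.\ preorder}. Granting this, I apply it to $P$ to obtain a c.e.\ preorder $\sqsubseteq$ with $(\omega^\ast;\sqsubseteq^\ast)\cong P$; this quotient is infinite, and if it were isomorphic to a c.e.\ partial order then so would $P$ be, contradicting the choice of $P$. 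Via Proposition~\ref{idealefalg} this is the same as separating c.e.\ from strongly c.e.\ algebraic domains, the poset of compact elements of $\I{\sqsubseteq}$ being exactly $(\omega^\ast;\sqsubseteq^\ast)$.

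For the realization lemma, I fix a computable decreasing approximation $\le^s$ to $\le_P$ (so $\le^{s+1}\,\subseteq\,\le^s$, each $\le^s$ reflexive, and $\bigcap_s\le^s\,=\,\le_P$), obtained by enumerating the c.e.\ complement of $\le_P$. The domain of the preorder will be a set of \emph{tokens} $(n,s)$, thought of as copies of the element $n$, and I let $\sqsubseteq$ be the reflexive--transitive closure of a computably enumerated set of generating pairs; taking the closure guarantees $\sqsubseteq$ is simultaneously c.e.\ and transitive. The generators tie successive copies of a single element together so that, in the limit, each element is represented by one $\equiv$-class, and they connect a copy of $n$ to a copy of $m$ while the current approximation still believes $n\le^s m$. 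The essential device is that whenever the enumeration reveals $n\not\le_P m$, the copy of $m$ currently carrying the (now erroneous) incoming relation from $n$ is \emph{frozen}, no further generators being attached to it, and a fresh copy is promoted to represent $m$; the bad relation then survives only between the $n$-class and a dead token rather than between the $n$- and $m$-classes. Because freezing is triggered by a c.e.\ event, the set of tokens that permanently represent a given element is co-c.e., so the order induced on these genuine representatives is co-c.e.\ and reproduces $\le_P$, even though the ambient preorder $\sqsubseteq$ stays c.e.

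The main obstacle is exactly this construction, and the difficulty is a direction mismatch: a c.e.\ preorder can only \emph{add} relations, whereas a co-c.e.\ order is specified by \emph{removing} them, so erroneous relations enumerated on the strength of early over-optimistic approximations can never be deleted. The whole point of passing to infinitely many tokens with a non-computable (merely co-c.e.) choice of permanent representatives is to absorb these errors harmlessly, and making this precise requires priority-style bookkeeping: one must verify that each element's $\equiv$-class eventually stabilizes, that no spurious order relation between two genuine classes is ever created (in particular that a frozen token never routes a forbidden relation onward to a live class), and that the resulting bijection between classes and elements of $P$ is an order isomorphism. Once this is checked, the quotient $(\omega^\ast;\sqsubseteq^\ast)\cong P$ is infinite and not isomorphic to any c.e.\ partial order, completing the proof.
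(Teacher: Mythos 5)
You have replaced the paper's proof --- which is simply an invocation of Theorem 1 of \cite{s20}, where the required c.e.\ preorder is obtained by a direct construction --- with a two-step reduction, and the second step, your ``realization lemma'' (every co-c.e.\ partial order is isomorphic to the quotient order of some c.e.\ preorder), is exactly where the argument breaks down. This is not a routine lemma: it is at least as strong as the proposition itself (indeed stronger, since you demand it for \emph{every} co-c.e.\ order, whereas one only needs a single suitably chosen order, and in a direct construction one controls both sides of the diagonalization). Your sketch of it has two concrete defects. First, freezing does not stop the propagation of erroneous relations, because you never retract the \emph{outgoing} edges of a frozen token: if the construction, believing $n\le^s m\le^s k$, has enumerated (copy of $n$) $\sqsubseteq$ (copy of $m$) $\sqsubseteq$ (copy of $k$), and then $n\not\le_P m$ is revealed, freezing $m$'s copy leaves this chain intact, and the transitive closure (which you must take for $\sqsubseteq$ to be a preorder) yields ($n$-class) $\sqsubseteq$ ($k$-class). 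If in fact $n\not\le_P k$, this is a spurious relation between two \emph{genuine} classes, routed through the dead token --- precisely the event you list as something to ``verify'' but which your construction as described does not prevent. Repairing it forces you to kill the whole upward closure of a frozen token, which feeds the second defect: stabilization. For a fixed $m$ there are infinitely many pairs $(n,m)$; any fair action rule must eventually act on every permanently believed pair, and an adversarial enumeration of the complement of $\le_P$ (which you must tolerate, since the lemma quantifies over all co-c.e.\ orders) can refute each false pair just after you act on it, so nothing in your bookkeeping bounds the number of times $m$'s representative is frozen. The asserted ``each element's $\equiv$-class eventually stabilizes'' is thus the open core of the construction, not a verification detail to be deferred.

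Even setting both issues aside, your concluding claim $(\omega^\ast;\sqsubseteq^\ast)\cong P$ contradicts your own construction: the frozen tokens are elements of the underlying set and hence form classes of the quotient, so at best the quotient is $P$ together with infinitely many junk classes. Non-isomorphism to c.e.\ partial orders is a property of the isomorphism \emph{type} and does not pass from $P$ to $P$-plus-junk without a further argument --- for instance, absorbing all junk into a single definable class (say a maximum) and then arguing, non-uniformly, that deleting the maximum from a c.e.\ copy of the quotient leaves a c.e.\ copy of $P$. Note that the naive absorptions fail: merging junk into a bottom class collapses every class with an erroneous edge \emph{into} the junk onto that bottom class, while merging into a top class fails because of the outgoing edges of dead tokens discussed above. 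As it stands, the proposal reduces the proposition to an unproved statement that is harder than the proposition, and the sketched proof of that statement would create forbidden relations between genuine classes and gives no control over injuries; this is a genuine gap, not a presentational one.
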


We use this proposition to obtain the following fact which was also announced in \cite{s20} without proof:
 
\begin{proposition}\label{separalg}
\begin{enumerate}
\item There is a strongly c.e.\ algebraic domain which is not homeomorphic to any computable algebraic domain. 
\item There is a c.e.\ algebraic domain which is not homeomorphic to any strongly c.e.\ algebraic domain.
\item  There is a c.e.\ domain which is not homeomorphic to any computable  domain.
\end{enumerate}
 \end{proposition}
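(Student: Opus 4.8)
The plan is to transfer the separation results about effective binary relations (Theorem 2.1 of \cite{chol} and Proposition \ref{cepre}) across the equivalences established in Proposition \ref{idealefalg}, so that a ``gap'' between two classes of relations yields a gap between the corresponding classes of domains. The key observation is that each item of Proposition \ref{separalg} corresponds exactly to one of the three equivalences, and that a homeomorphism between domains must respect the specialization order, hence must induce an isomorphism between the underlying orders of compact elements (or the quotient-orders, in the preorder case).

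\emph{Item (1).} First I would take, via Theorem 2.1 of \cite{chol}, a co-c.e.\ partial order $P$ on $\omega$ that is not isomorphic to any c.e.\ partial order. I would then argue that $P$, being co-c.e., is in particular a partial order whose $\leq$ relation is c.e.\ when we pass to a suitable bijective numbering of its elements; more carefully, I would invoke the c.e.\ side of Theorem 2.1 to obtain a \emph{c.e.} partial order that is not isomorphic to any \emph{co-c.e.} one, and then recall that a strongly c.e.\ algebraic domain is presented by a bijective c.e.\ partial order (Definition \ref{effalg}(2)), while a computable partial order is in particular co-c.e. By Proposition \ref{idealefalg}(2) the infinite computable algebraic domains are exactly the $\I{\sqsubseteq}$ for computable partial orders $\sqsubseteq$, and the strongly c.e.\ ones correspond to bijective c.e.\ partial orders. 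Since a homeomorphism $\I{\sqsubseteq}\simeq\I{\sqsubseteq'}$ restricts to a bijection of compact elements preserving the specialization order, it induces an order-isomorphism $(\omega;\sqsubseteq)\cong(\omega;\sqsubseteq')$. Hence the strongly c.e.\ algebraic domain $\I{\sqsubseteq}$ arising from the chosen c.e.\ partial order $\sqsubseteq$ cannot be homeomorphic to any computable algebraic domain, else its order would be isomorphic to a computable (so co-c.e.) one, contradicting the choice.

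\emph{Item (2).} Here I would use Proposition \ref{cepre} to fix a c.e.\ preorder $\sqsubseteq$ whose quotient-order $(\omega^*;\sqsubseteq^*)$ is infinite and not isomorphic to any c.e.\ partial order. By Proposition \ref{idealefalg}(1) the space $\I{\sqsubseteq}$ is a c.e.\ algebraic domain. If it were homeomorphic to some strongly c.e.\ algebraic domain, then by Proposition \ref{idealefalg}(2) the latter is $\I{\sqsubseteq'}$ for a c.e.\ \emph{partial order} $\sqsubseteq'$, and the induced order-isomorphism on compact elements would give $(\omega^*;\sqsubseteq^*)\cong(\omega;\sqsubseteq')$, exhibiting the quotient-order as (isomorphic to) a c.e.\ partial order, a contradiction. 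Note I use that $\I{\sqsubseteq}\simeq\I{\sqsubseteq^*}$ (the parenthetical remark in the proof of Proposition \ref{idealdom}(2)), so the compact-element order of $\I{\sqsubseteq}$ is exactly the quotient-order.

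\emph{Item (3).} For the continuous case I would combine item (2) with the inclusions noted before Proposition \ref{idealdom}: every $\omega$-algebraic domain is $\omega$-continuous, and more importantly a computable algebraic domain is a computable domain. The cleanest route is to produce a c.e.\ domain that is not homeomorphic to any computable domain by taking the c.e.\ algebraic domain from item (2) (or item (1)) and checking that its natural c.e.\ presentation via $\ll$ makes it a c.e.\ domain in the sense of Definition \ref{effdomain}, while a homeomorphism to any computable domain would, by recovering the compact elements as the $x$ with $x\ll_X x$ and the order as the restriction of $\ll_X$, yield a computable presentation of the compact-element order, contradicting the choice. The main obstacle I anticipate is precisely this last point: one must check that homeomorphism of (continuous) domains lets us recover the compact elements and their order \emph{effectively enough} to transfer the non-existence of a computable presentation — i.e.\ that no computable continuous-domain presentation can secretly encode the bad order. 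The identities $K(X)=\{x\mid x\ll_X x\}$ and $x\ll_X y\Leftrightarrow x\leq_X y$ for compact $x$ (recalled in the Preliminaries) are the tools for this, and verifying that they interact correctly with the c.e./computable presentations of $\ll_X$ is where the real work lies.
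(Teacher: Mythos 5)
Your items (1) and (2) coincide with the paper's own proof: the same witnesses (a c.e.\ partial order, via Theorem 2.1 of \cite{chol}, not isomorphic to any computable partial order, and the preorder of Proposition \ref{cepre}), and the same transfer mechanism, namely that a homeomorphism restricts to an order-isomorphism of $(K(X);\leq_X)$ with $(K(Y);\leq_Y)$, combined with Proposition \ref{idealefalg}. One caution on your opening move in item (1): a co-c.e.\ partial order does not become c.e.\ by passing to a different numbering of its field, so your first sentence is a dead end --- but you correctly discard it in favor of the c.e.\ side of Theorem 2.1 (a c.e.\ order not isomorphic to any co-c.e., hence to any computable, order), which is exactly the witness the paper uses.

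In item (3) you stop precisely where the paper does its one real piece of work, and the obstacle you flag is more benign than you fear. No effectivity of the homeomorphism is needed at all: as in items (1)--(2), an arbitrary homeomorphism already yields an abstract order-isomorphism on compact elements, so it suffices to show that any computable domain $(Y,b)$ homeomorphic to the space $X$ of item (1) admits a computable algebraic presentation --- and this is internal to $(Y,b)$. Since $Y$ is homeomorphic to an algebraic domain it is algebraic, and the decisive (standard) fact is that \emph{every} domain basis of an algebraic domain contains every compact element: if $x\ll_Y x$, then writing $x$ as the directed supremum of the basis elements way-below it, compactness gives a basis element $b$ with $x\leq_Y b\ll_Y x$, whence $b=x$. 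Hence $K(Y)=\{b_i\mid i\in C\}$ where $C=\{i\mid b_i\ll_Y b_i\}$, which is computable because $\ll_Y$ is computable on basis indices, and infinite because $K(Y)$ is order-isomorphic to the infinite $K(X)$; since $\ll_Y$ and $\leq_Y$ agree on compact elements, re-indexing the numbering along $C$ yields a computable algebraic domain homeomorphic to $X$, contradicting item (1). So no computable continuous-domain presentation can ``secretly encode'' the bad order. Your alternative of starting instead from the domain of item (2) also works: the same argument produces a computable, hence strongly c.e., algebraic domain, contradicting item (2) directly.
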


\begin{proof} (1) By Theorem 2.1 in \cite{chol}, there is a c.e.\ partial order $\sqsubseteq$ on $\omega$ which is not isomorphic to any computable partial order on $\omega$. We claim that the space $X=\I{\sqsubseteq}$ has the desired properties. Indeed, by Proposition \ref{idealefalg}(2) $X$ (with a natural numbering of the compact elements) is a strongly c.e.\ algebraic domain. Suppose, for a contradiction, that $X$ is homeomorphic to a computable algebraic domain $(Y,c)$, and let $f:X\to Y$ be a homeomorphism. As noticed in  the proof of Theorem 2 in \cite{s20}, the restriction of $f$ to $K(X)$ is an isomorphism between $(K(X);\leq_X)$ and $(K(Y);\leq_Y)$. By Proposition \ref{idealefalg}, $(K(X);\leq_X)$ and $(K(Y);\leq_Y)$ are isomorphic respectively to $(\omega,\sqsubseteq)$ and to $(\omega,\sqsubseteq_Y)$, for some computable partial order  $\sqsubseteq_Y$ on $\omega$. Thus, the latter two partial orders are isomorphic. A contradiction.

(2) Let  $\sqsubseteq$ be the preorder on $\omega$ from Proposition \ref{cepre}.  The same argument as in item (1) shows that the space $X=\I{\sqsubseteq}$ has the desired properties because  $(K(X);\leq_X)$ is isomorphic to
$(\omega^*;\sqsubseteq^*)$ by Proposition \ref{idealefalg}.

(3)  Let $(X,c)$ be the space from the proof of item (1), then $(X,c)$ is an infinite c.e.\ domain. Suppose for a contradiction that $X$ is homeomorphic to $Y$ for some computable domain $(Y,b)$. Let $C=\{i\mid b_i\ll_Yb_i\}$, then, as a base must contain all compact elements, $C=\{i_0<i_1<\cdots\}$ is an infinite computable set and $(Y,\{b_{i_j}\}_j)$ is a computable algebraic domain. This contradicts  item (1). 
 \end{proof}

\section{Representing computable functions}\label{func}

Here we describe useful representations of computable functions between EQP spaces and effective domains.

First we briefly recall some notions and facts from Section 2.2.6 of \cite{aj}. Let $\mathbb{A}=(A;\prec_1)$ and $\mathbb{B}=(B;\prec_2)$ be interpolable relations on arbitrary sets $A,B$; we denote the elements of $A$ as $a,a',\ldots$, and similarly for $B$. We say that  a set $R\subseteq A\times B$  is a morphism from $\mathbb{A}$ to $\mathbb{B}$ (in symbols, $R:\mathbb{A}\to\mathbb{B}$) if it satisfies the following conditions: if $aR b$ and $a\prec_1a'$ then $a'R b$; if $aR b$ and $b'\prec_2b$ then $aR b'$; for any $a$ there is $b$ with  $aR b$; for all $a,b,b'$ with $aR b,aR b'$ there is $b''$ with $b\prec_2b'',b'\prec_2b''$, and $aR b''$; if $aR b$ then $a'R b$ for some $a'\prec_1a$  (cf. Definition 2.2.27 in \cite{aj}).  

By Theorem 2.2.28 in \cite{aj}, the category $\inter$ of the interpolable relations $\mathbb{A}$ and the just defined morphisms is equivalent to the category $\cont$ of continuous domains as objects and continuous functions as morphisms. The equivalence is given by the functors $I:\inter\to\cont$ and $B:\cont\to\inter$ defined as follows. Let $I(\mathbb{A})$ be the space of ideals of $\mathbb{A}$, and, for $R:\mathbb{A}\to\mathbb{B}$, $I(R):I(\mathbb{A})\to I(\mathbb{B})$ be the image map $\lceilx R\rceilx(J)=\{b\mid\exists a\in J(aRb)\}$, $J\in I(\mathbb{A})$. For an object $X$ of $\cont$ let $B(X)=(X;\ll_X)$, and for a morphism $f:X\to Y$ of $\cont$ let $B(f)=\{(x,y)\mid y\ll_Yf(x)\}$. 

By Theorem 2.2.29 in \cite{aj}, the full subcategory $\pre$ of $\inter$ formed by the arbitrary preorders $\mathbb{A}=(A;\sqsubseteq)$ as objects,  is equivalent to the full subcategory $\alg$ of $\cont$ formed by the algebraic domains as objects. The equivalence is given by the functors $I_\pre:\pre\to\alg$ and $K:\alg\to\pre$ where $I_\pre$ is the restriction of $I$ to $\pre$, $K(X)$ is the restriction of $\leq_X$ to the set of compact elements in $X$,  and, for a morphism $f:X\to Y$ of $\alg$, let $K(f)=\{(x,y)\mid x\in K(X)\wedge y\in K(Y)\wedge y\leq_Yf(x)\}$.

Now we describe effective versions of the cited results for the classes of effective algebraic domains from Proposition \ref{idealefalg}. The effectivization of Theorem 2.2.28 in \cite{aj} is currently not clear. The problem is that the functor $I$ increases the algorithmic complexity (while the effective version of the functor $B$, $B_e(X,b)=(\omega;\prec)$ where $m\prec n\leftrightarrow b(m)\ll_Xb(n)$, preserves the algorithmic complexity).

Let $\alg_e$ (resp. $\alg_s$, resp. $\alg_c$) be the category of c.e. (resp. infinite strongly c.e., resp. computable) algebraic  domains as objects and the computable functions as morphisms. Let  $\pre_{e}$ (resp. $\pre_{s}$, resp. $\pre_{c}$) be the category of c.e. preorders (resp.  c.e. partial orders, resp. computable preorders) on $\om$; the morphisms in all three categories are c.e. morphisms of $\pre$ restricted to the objects of these categories. Let $\simeq$ denote the equivalence of categories.

\begin{theorem}\label{categ}
We have: $\pre_{e}\simeq\alg_e$, $\pre_{s}\simeq\alg_s$, $\pre_{c}\simeq\alg_c$.
 \end{theorem}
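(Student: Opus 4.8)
The plan is to effectivize the non-effective equivalence $\pre \simeq \alg$ from Theorem 2.2.29 in \cite{aj}, reusing the functors $I_\pre$ and $K$ but checking that everything in sight is computable/c.e.\ and that the functors land in the correct effective subcategories. The key observation is that, unlike the general continuous-domain case where the functor $I$ increases algorithmic complexity (as the paper notes just before the statement), for preorders and algebraic domains the ideal functor behaves well: the compact elements of $\I{\sqsubseteq}$ are exactly the principal ideals $\downarrow n$ (Proposition \ref{idealdom}(2)), and $\downarrow m \subseteq \downarrow n$ iff $m \sqsubseteq n$, so the approximation/specialization relation on the compact elements is recovered \emph{verbatim} from $\sqsubseteq$ with no loss of effectivity. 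This is precisely why Proposition \ref{idealefalg} already gives the object-level correspondence in each of the three effectivity grades, and I would cite it directly to handle objects.

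\medskip

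First I would fix the object correspondence grade-by-grade. For $\pre_c \simeq \alg_c$, Proposition \ref{idealefalg}(1) says that $I_\pre$ sends a computable preorder to a computable algebraic domain (with base numbering $c_n = \downarrow n$) and conversely $K = B_e$ sends a computable algebraic domain to a computable preorder; these are mutually inverse up to isomorphism. The same statement covers $\pre_e \simeq \alg_e$ in the c.e.\ case. For $\pre_s \simeq \alg_s$ I would invoke Proposition \ref{idealefalg}(2): a c.e.\ \emph{partial} order yields a strongly c.e.\ algebraic domain (the numbering $c_n=\downarrow n$ is injective precisely because antisymmetry makes $n \mapsto \downarrow n$ injective), and $K$ of an infinite strongly c.e.\ algebraic domain is, via its bijective numbering, a c.e.\ partial order. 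So the objects match in all three rows.

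\medskip

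The real content, and the main obstacle, is the morphism level: I must show that the $\pre$-morphisms $R:\mathbb{A}\to\mathbb{B}$ that are \emph{c.e.}\ correspond exactly to the \emph{computable} functions $\I{\sqsubseteq_A}\to\I{\sqsubseteq_B}$, and that the functors $I_\pre$ and $K$ restrict to bijections on these hom-sets. In the direction $I_\pre$, given a c.e.\ morphism $R$, its image map $\lceilx R\rceilx(J)=\{n \mid \exists m\in J\,(mRn)\}$ must be shown computable as a map between spaces of ideals: on the canonical-embedding level, $n \in \lceilx R\rceilx(J)$ is a c.e.\ condition in (a code for) $J$ because $R$ is c.e.\ and membership in an ideal is positively decidable, which should give computability of the realizer. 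In the direction $K$, given a computable $f:\I{\sqsubseteq_A}\to\I{\sqsubseteq_B}$, I set $K(f)=\{(m,n)\mid \downarrow n \leq_Y f(\downarrow m)\}$, i.e.\ $n \in f(\downarrow m)$, and I must check this relation is c.e.; this follows because $\downarrow m$ has a finite (indeed computable) name, $f$ is computable, and $n \in f(\downarrow m)$ is a $\Sigma^0_1$ event. I would then verify the functoriality and the natural isomorphisms $K\circ I_\pre \cong \mathrm{id}$ and $I_\pre\circ K \cong \mathrm{id}$ hold with \emph{computable} natural transformations, which amounts to the routine diagram-chases of Theorem 2.2.29 in \cite{aj} with each witnessing map checked to be computable. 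The subtle point demanding care is the last defining clause of a $\pre$-morphism (``if $aRb$ then $a'Rb$ for some $a'\prec_1 a$''), which in a preorder collapses but must still be tracked to ensure $K(f)$ really is a $\pre$-morphism and that the c.e.\ grade (as opposed to full computability) is preserved; verifying that the morphism conditions are expressible as c.e.\ conditions, and that no step secretly requires co-c.e.\ information, is where I expect the bookkeeping to be delicate but not deep.
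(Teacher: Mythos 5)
Your proposal is correct and takes essentially the same approach as the paper: the paper's proof just defines the functors $I_e$ (objects sent to $(\I{\sqsubseteq},b)$ with $b(n)=\{m\mid m\sqsubseteq n\}$, morphisms $R$ sent to the image map $\lceilx R\rceilx$) and $B_e$ (with $B_e(f)=\{(m,n)\mid c(n)\leq_Y f(b(m))\}$, exactly your $K(f)$), obtains the other two equivalences by restriction, and explicitly leaves the verifications to the reader. The checks you spell out — computability of $\lceilx R\rceilx$ for c.e.\ $R$ via Theorem~\ref{theorem:cont_func_code}, c.e.-ness of $n\in f(\downarrow m)$ from the uniformly c.e.\ names of principal ideals, and the object-level correspondence from Proposition~\ref{idealefalg} — are precisely the ``straightforward checking'' the paper omits, so your write-up is a filled-in version of the same argument rather than a different one.
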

 
\begin{proof} We only define the functors witnessing the equivalences, leaving the straightforward checking of their properties to the reader. Define $I_{e}:\pre_{e}\to\alg_e$ as follows: $I_{e}(\omega;\sqsubseteq)=(I(\sqsubseteq),b)$ where $b(n)=\{m\mid m\sqsubseteq n\}$, and for a morphism $R:(\om;\sqsubseteq_1)\to(\om;\sqsubseteq_2)$ let $I_{e}(R)$ be the image map $\lceilx R\rceilx$ restricted to $I(\sqsubseteq_1)$. Define $B_{e}:\alg_e\to\pre_{e}$ as follows: if $(X,b)$ is a c.e. algebraic domain then let $B_{e}(X,b)=(\omega;\sqsubseteq)$ where $m\sqsubseteq n\leftrightarrow b(m)\leq_Xb(n)$, and if $f:(X,b)\to(Y,c)$ is a computable function then let $B_{e}(f)=\{(m,n)\mid c(n)\leq_Yf(b(m))\}$. Then $I_{e},B_{e}$ are witnesses for $\inter_{e}\simeq\alg_e$ The witnesses   for the remaining two equivalences are obvious restrictions of  $I_{e},B_{e}$.
 \end{proof}

We conclude this section with remarks on representing functions between QP-spaces represented as spaces of ideals $\I{\prec_1}$ and $\I{\prec_2}$. We define a code for a partial function to be any subset $R\subseteq \omega\times\omega$. Each code $R$ encodes the partial function $\name{R}:\subseteq \I{\prec_1}\to \I{\prec_2}$ defined as
\begin{eqnarray*}
\name{R}(I) &=& \{ n\in\omega \mid (\exists m\in I)\,\langle m,n\rangle\in R\},\\
dom(\name{R}) &=& \{ I \in \I{\prec_1} \mid \name{R}(I) \in \I{\prec_2}\}.
\end{eqnarray*}

The following fact  is Theorem 2 from  \cite{br2}\footnote{In the original paper, the statement of the theorem incorrectly omitted the requirement that $\prec_1$ be a c.e. relation. We are grateful to Ivan Georgiev for pointing out this mistake and providing a counter example.}.

\begin{theorem}\label{theorem:cont_func_code}
Let $\prec_1$ and $\prec_2$ be c.e. transitive relations on $\omega$. A total function $f\colon \I{\prec_1} \to \I{\prec_2}$ is computable if and only if there is a c.e. code $R\subseteq \omega\times\omega$ such that $f=\name{R}$.
\qed
\end{theorem}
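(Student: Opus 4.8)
The plan is to prove both directions of the biconditional in Theorem~\ref{theorem:cont_func_code}, where the interesting content is the forward direction (computability of $f$ yields a c.e.\ code), since the reverse direction amounts to checking that any $\name{R}$ with $R$ c.e.\ is continuous and that its graph is effectively open.

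\textbf{Reverse direction.} First I would assume $R\subseteq\omega\times\omega$ is c.e.\ and $f=\name{R}$ is total. I would show $f$ is computable by exhibiting, for each basic open set $[n]_{\prec_2}\subseteq\I{\prec_2}$, that $f^{-1}([n]_{\prec_2})$ is effectively open in $\I{\prec_1}$ uniformly in $n$. By definition, $I\in f^{-1}([n]_{\prec_2})$ iff $n\in\name{R}(I)$ iff there is $m\in I$ with $\langle m,n\rangle\in R$, i.e.\ iff $I\in\bigcup\{[m]_{\prec_1}\mid \langle m,n\rangle\in R\}$. Since $R$ is c.e., the set of such $m$ is c.e.\ uniformly in $n$, so $f^{-1}([n]_{\prec_2})$ is a c.e.\ union of basic open sets, uniformly in $n$; this is exactly effective continuity.

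\textbf{Forward direction.} Now suppose $f\colon\I{\prec_1}\to\I{\prec_2}$ is total and computable. I would extract a code $R$ directly from the effective continuity witness. Computability of $f$ means that for each $n$, the preimage $f^{-1}([n]_{\prec_2})$ is effectively open, so there is a c.e.\ set $W_n$ (uniformly c.e.\ in $n$) with $f^{-1}([n]_{\prec_2})=\bigcup_{m\in W_n}[m]_{\prec_1}$. I would then define $R=\{\langle m,n\rangle\mid m\in W_n\}$, which is c.e.\ by the uniformity. It remains to verify $f=\name{R}$. For any $I\in\I{\prec_1}$ and any $n$, we have $n\in f(I)$ iff $I\in f^{-1}([n]_{\prec_2})$ iff there is $m\in W_n$ with $I\in[m]_{\prec_1}$, i.e.\ $m\in I$ with $\langle m,n\rangle\in R$, which is precisely $n\in\name{R}(I)$. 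Hence $f(I)=\name{R}(I)$ for all $I$, so $f=\name{R}$.

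\textbf{Main obstacle.} The genuinely delicate point is why the c.e.\ hypothesis on $\prec_1$ is needed (as the footnote flags): without it, the argument that $f$ being total and continuous forces the graph structure to align with $\name{R}$ can fail, because one must ensure that the witnessing basic opens $[m]_{\prec_1}$ actually correspond to membership $m\in I$ in a way compatible with $I$ being a genuine $\prec_1$-ideal. I expect the care to lie in confirming that the effective-continuity data interacts correctly with the ideal structure of the domain $\I{\prec_1}$ --- specifically that the equivalence $n\in f(I)\Leftrightarrow(\exists m\in I)\langle m,n\rangle\in R$ holds on all of $\I{\prec_1}$ and not merely on a dense subset. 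Since this is stated as a cited result (Theorem 2 of \cite{br2}), I would lean on the established ideal-presentation machinery rather than reprove the subtle totality/overtness interplay from scratch.
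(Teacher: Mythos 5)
The paper itself states this result without proof --- it is imported as Theorem 2 of \cite{br2}, with a footnote recording that the original statement wrongly omitted the hypothesis that $\prec_1$ be c.e.\ (a counterexample is due to Georgiev) --- so your proposal must be judged on its own merits, and it has a genuine gap in the forward direction. You begin that direction by asserting that computability of $f$ \emph{means} that each preimage $f^{-1}([n]_{\prec_2})$ is, uniformly in $n$, a c.e.\ union of single basic opens $[m]_{\prec_1}$. In the framework this paper and \cite{br2} work in, computability of a function between ideal spaces is defined via computable realizers with respect to the standard representations (equivalently, $f$ is tracked by an enumeration operator on $P\omega$ through the canonical embedding, under which $\I{\prec_1}$ sits in $P\omega$ literally, an ideal being named by its enumerations). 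The passage from a computable realizer to effective continuity in your strong form is precisely the nontrivial content of the theorem, and it is exactly where c.e.-ness of $\prec_1$ is indispensable: a realizer only gives $f^{-1}([n]_{\prec_2})$ as a c.e.\ union of images of cylinders under the standard representation, and such an image is a \emph{finite intersection} $[m_1]_{\prec_1}\cap\dots\cap[m_k]_{\prec_1}$. To rewrite this as a union of single basics one uses that an ideal $I$ contains $m_1,\dots,m_k$ iff $I\in[j]_{\prec_1}$ for some common $\prec_1$-upper bound $j$ of $m_1,\dots,m_k$ (directedness plus lower-closure), and enumerating the set of such $j$ requires $\prec_1$ to be c.e. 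Symptomatically, your argument uses the c.e.-ness of $\prec_1$ nowhere --- your own ``main obstacle'' paragraph admits you cannot locate its role --- yet the theorem is false without it, so a gap is guaranteed; and your proposed fallback of ``leaning on the established ideal-presentation machinery'', i.e.\ citing Theorem 2 of \cite{br2}, is circular, since that is the statement to be proved.

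A repair following the cited proof: let $\Phi$ be an enumeration operator with $\Phi(I)=f(I)$ for every $I\in\I{\prec_1}$ (this extraction from a realizer is routine and does \emph{not} need $\prec_1$ c.e.), and set $R=\{\langle m,n\rangle \mid n\in\Phi(\{k\mid k\prec_1 m\})\}$, which is c.e.\ precisely because $\prec_1$ is. If $n\in f(I)=\Phi(I)$, finite use of $\Phi$ gives a finite $D\subseteq I$ with $n\in\Phi(D)$; iterated directedness yields $m\in I$ with $D\subseteq\{k\mid k\prec_1 m\}$ (any $m\in I$ if $D=\emptyset$), so monotonicity gives $\langle m,n\rangle\in R$ with $m\in I$. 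Conversely, if $\langle m,n\rangle\in R$ and $m\in I$, then $\{k\mid k\prec_1 m\}\subseteq I$ since $I$ is a lower set, so $n\in\Phi(I)=f(I)$ by monotonicity. Hence $f=\name{R}$, with every hypothesis visibly used. Your reverse direction is essentially fine: from an enumeration of $I$ one enumerates $\{n\mid(\exists m\in I)\,\langle m,n\rangle\in R\}$, and totality guarantees this output is a valid name of a point of $\I{\prec_2}$.
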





\section{Enumerating classes of spaces}\label{num}

Here we introduce and study numberings of some classes of relations on $\omega$ and of EQP-spaces.
Some natural numberings of spaces may be defined directly from the definitions of Section \ref{prel}. For any effective space $X$, let $\pi_X$ be the standard numbering of $\Pi^0_2$-subspaces of $X$. In the particular case $X=P\omega$ we obtain the numbering $\pi=\pi_X$ of all (up to $\simeq_e$) EQP-spaces. In the particular case $X=[0,1]^\omega$, $\pi_X$ is a numbering of all effective Polish spaces (because, up to homeomorphism, Polish spaces are precisely the $\mathbf{\Pi}^0_2$-subspaces of the Hilbert cube, see e.g. Theorem 4.14 in \cite{ke95}); setting $\mu(n)=e(\pi_X(n))$, where $e$ is the canonical embedding of $[0,1]^\omega$ into $P\omega$, we obtain a numbering $\mu$ of effective Polish spaces realised as $\Pi^0_2$-subspaces of $P\omega$. 

Other natural numberings of spaces are defined  using the ideal representations. 
We first define some numberings of classes of relations on $\omega$. Setting $V_n=\{(i,j)\mid\langle i,j\rangle\in W_n\}$, we obtain a standard computable numbering $\{V_n\}$ of the class $\mathbf{E}$ of all c.e.~binary relations on $\omega$. Let $\mathbf{T},\mathbf{I},\mathbf{P},\mathbf{O}$ be the classes of all transitive c.e.\ relations, all interpolable c.e.\ relations, all c.e.\ preorders, and all c.e.\ partial orders on $\omega$, respectively.

\begin{proposition}\label{numb_rel}
\begin{enumerate}
\item There is a computable function $t$ such that: $V_{t(n)}\in\mathbf{T}$,  $V_n\in\mathbf{T}$ implies  $V_n=V_{t(n)}$, and $V_m=V_n$ implies $V_{t(m)}=V_{t(n)}$. 
\item There is a computable function $p$ such that: $V_{p(n)}\in\mathbf{P}$,  $V_n\in\mathbf{P}$ implies  $V_n=V_{p(n)}$, and $V_m=V_n$ implies $V_{p(m)}=V_{p(n)}$. 
\item There is a computable function $o$ such that: $V_{o(n)}\in\mathbf{O}$, and $V_n\in\mathbf{O}$ implies  $V_n=V_{o(n)}$. 
 \end{enumerate}
\end{proposition}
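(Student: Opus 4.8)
The plan is to realize each of $t$, $p$, $o$ by a uniform effective transformation of the enumerated relation, appealing to the $s$-$m$-$n$ theorem to extract the computable function of indices once the transformed relation is shown to be uniformly c.e.\ in $n$.

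For part (1) I would simply take transitive closures. Given $n$, let $V_{t(n)}$ be the transitive closure of $V_n$, i.e.\ the set of pairs $(a,b)$ for which there is a finite chain $a = x_0, x_1, \dots, x_k = b$ with $k\ge 1$ and $(x_\ell, x_{\ell+1}) \in V_n$ for all $\ell$. This is a $\Sigma^0_1$ condition uniformly in $n$, so $\{V_{t(n)}\}$ is uniformly c.e.\ and $t$ exists by $s$-$m$-$n$. The transitive closure is always transitive, so $V_{t(n)} \in \mathbf{T}$; if $V_n$ is already transitive then its transitive closure is $V_n$ itself, giving $V_n = V_{t(n)}$; and since the transitive closure of a set depends only on that set and not on the index, $V_m = V_n$ forces $V_{t(m)} = V_{t(n)}$. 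Part (2) is identical, with the reflexive--transitive closure in place of the transitive closure: set $V_{p(n)} := \{(i,i) \mid i\in\omega\} \cup V_{t(n)}$, the least preorder containing $V_n$. It is always a preorder, equals $V_n$ whenever $V_n \in \mathbf{P}$, and is extensional in $n$ for the same reason.

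The main obstacle is part (3), since antisymmetry is not a closure property: there is no least partial order containing a given relation, and this is precisely why the extensionality clause is dropped in (3). Instead I would build $V_{o(n)}$ by a greedy stagewise enumeration that commits to a partial order regardless of whether $V_n$ is one. The key observation is that antisymmetry of a \emph{finite} relation is decidable. So I would maintain a finite relation $S$ (initially empty), always enumerate all reflexive pairs $(i,i)$ into $V_{o(n)}$, and process the enumeration of $V_n$: when a pair $(a,b)$ appears, compute the reflexive--transitive closure $\bar{S'}$ of $S' := S \cup \{(a,b)\}$ over the finitely many mentioned elements; if $\bar{S'}$ is antisymmetric, accept by setting $S := S'$ and enumerating $\bar{S'}$ into $V_{o(n)}$, otherwise discard $(a,b)$. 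Uniformity in $n$ is clear, so $o$ exists by $s$-$m$-$n$.

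It then remains to verify the two required properties. First, $V_{o(n)}$ is always a partial order: it is reflexive by construction; it is transitive because any two witnessing pairs lie in a common finite closure $\bar{S_s}$, whose transitivity supplies the composite; and it is antisymmetric because a violating pair $(a,b),(b,a)$ with $a\ne b$ would both appear in some $\bar{S_s}$, contradicting the antisymmetry maintained at each accepted stage. Second, if $V_n \in \mathbf{O}$ then no pair is ever discarded: every accepted set $S$ stays inside the partial order $V_n$, so the reflexive--transitive closure of $S\cup\{(a,b)\}$ for $(a,b)\in V_n$ is again contained in $V_n$ and hence antisymmetric, so the test never fails for a pair of $V_n$. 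Consequently $V_{o(n)}$ is the reflexive--transitive closure of $V_n$, which equals $V_n$ because $V_n$ is already reflexive and transitive. This yields $V_{o(n)} = V_n$ whenever $V_n \in \mathbf{O}$, completing the plan.
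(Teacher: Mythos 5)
Your proof is correct, and parts (1) and (2) coincide exactly with the paper's: the paper also takes $V_{t(n)}$ to be the transitive closure of $V_n$ and $V_{p(n)}$ its reflexive--transitive closure, with extensionality following because these closures depend only on the set $V_n$. For part (3) your construction is a valid but different instantiation of the same underlying idea. The paper enumerates $V_{p(n)}$ stagewise and \emph{freezes} at the first antisymmetry violation: it builds $A_n\subseteq V_{p(n)}$ with $A_n=V_{p(n)}$ when $V_{p(n)}$ is a partial order, and otherwise $A_n$ is a finite order plus the diagonal, so that almost all elements end up pairwise incomparable. You instead \emph{greedily filter}: you discard only the individual offending pairs and keep accepting later pairs whenever the finite reflexive--transitive closure of the accumulated set remains antisymmetric. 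Your verification is sound --- the accepted closures form an increasing chain of finite antisymmetric transitive relations, any transitivity or antisymmetry witness lies in a common stage, and when $V_n\in\mathbf{O}$ no pair is ever rejected since each test set sits inside $V_n$ --- and in fact your $V_{o(n)}$ is also contained in $V_{p(n)}$, though it lacks the paper's ``almost all incomparable'' feature (which is not needed for the statement). The paper's freeze strategy is slightly easier to verify; your greedy strategy salvages a larger partial order from a bad enumeration. You also correctly anticipated why extensionality is dropped in (3): the paper records (crediting a referee) that no function $o$ with the extensionality property $V_m=V_n\Rightarrow V_{o(m)}=V_{o(n)}$ can exist, and indeed your output depends on the enumeration order, which is permissible here.
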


\begin{proof}
(1) As $t$ we can take arbitrary computable function such that $V_{t(n)}$ is the transitive closure of $V_n$ (such a function obviously exists). 

(2) As $p$ we can take arbitrary computable function such that $V_{p(n)}$ is the reflexive transitive closure of $V_n$ (such a function obviously exists).

(3) Given a computable step-wise enumeration of $\{V_{p(n)}\}$, it is straightforward  to construct a computable sequence $\{A_n\}$ of c.e.\ partial orders on $\omega$ such that: $A_n\subseteq V_{p(n)}$; if $V_{p(n)}$ is a partial order then $A_n=V_{p(n)}$; if $V_{p(n)}$ is not a partial order then almost all elements of $A_n$ are pairwise incomparable. As $o$ we can take arbitrary computable function such that $V_{o(n)}=A_n$.
 \end{proof}

We thank an anonymous referee of the conference version of this paper for showing that there is no function $o$ as in item (3) with the additional property that $V_m=V_n$ implies $V_{o(m)}=V_{o(n)}$.


\begin{corollary}\label{computable}
The classes $\mathbf{T},\mathbf{P},\mathbf{O}$ have computable numberings, namely the numberings $\{V_{t(n)}\}$, $\{V_{p(n)}\}$, $\{V_{o(n)}\}$, respectively.
\end{corollary}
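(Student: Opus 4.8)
The plan is to read off the corollary directly from Proposition~\ref{numb_rel}, after first unwinding what it means for a subclass of $\mathbf{E}$ to admit a computable numbering. Recall that $\{V_n\}$ is a uniformly c.e.\ family exhausting all c.e.\ binary relations on $\omega$; a \emph{computable numbering} of a subclass $\mathbf{C}\subseteq\mathbf{E}$ is then a uniformly c.e.\ sequence of relations whose range is exactly $\mathbf{C}$. Thus for each of $\mathbf{T}$, $\mathbf{P}$, $\mathbf{O}$ I would verify two things for the candidate sequence: that every term lies in the class (\emph{soundness}), and that every member of the class occurs as some term (\emph{coverage}).

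First I would note that each candidate sequence is indeed computable, i.e.\ uniformly c.e. Since $\{V_n\}$ is uniformly c.e.\ and $t,p,o$ are total computable functions, the families $\{V_{t(n)}\}$, $\{V_{p(n)}\}$, $\{V_{o(n)}\}$ are obtained by computably reindexing a uniformly c.e.\ family, hence are themselves uniformly c.e. This handles the effectivity requirement.

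For soundness and coverage I would invoke the two relevant clauses of Proposition~\ref{numb_rel} for each class. Soundness is immediate: clause (1) gives $V_{t(n)}\in\mathbf{T}$ for all $n$, and clauses (2) and (3) give $V_{p(n)}\in\mathbf{P}$ and $V_{o(n)}\in\mathbf{O}$ respectively, so every term of each sequence is a member of the intended class. For coverage I would argue as follows in the case of $\mathbf{T}$ (the other two being identical). Let $R\in\mathbf{T}$ be an arbitrary transitive c.e.\ relation. Since $\{V_n\}$ exhausts all c.e.\ relations, there is some $n$ with $V_n=R$; as $R$ is transitive, $V_n\in\mathbf{T}$, so the second clause of Proposition~\ref{numb_rel}(1) yields $V_n=V_{t(n)}$, whence $R=V_{t(n)}$ appears in the sequence. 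The same reasoning, using the membership-and-fixed-point clauses of Proposition~\ref{numb_rel}(2) and~(3), shows that $\{V_{p(n)}\}$ covers all of $\mathbf{P}$ and $\{V_{o(n)}\}$ covers all of $\mathbf{O}$.

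There is essentially no obstacle here: the corollary is a bookkeeping consequence of the proposition, and the only point requiring a moment's care is that the surjectivity of each numbering rests on combining the ``fixed point on the class'' clause with the surjectivity of the ambient numbering $\{V_n\}$ onto $\mathbf{E}$. Note that the extensionality clauses (such as $V_m=V_n\Rightarrow V_{t(m)}=V_{t(n)}$) are not needed for the corollary itself; they record the additional structural fact that $t$ and $p$ descend to well-defined operations on relations, which is exploited elsewhere but is irrelevant to the mere existence of a computable numbering.
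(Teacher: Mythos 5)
Your proposal is correct and matches the paper's intent exactly: the paper states Corollary~\ref{computable} without proof, treating it as the immediate bookkeeping consequence of Proposition~\ref{numb_rel} that you spell out (soundness from the clauses $V_{t(n)}\in\mathbf{T}$ etc., coverage from the fixed-point clauses combined with the surjectivity of $\{V_n\}$ onto $\mathbf{E}$, and uniform c.e.-ness from composing with the total computable functions $t,p,o$). Your remark that the extensionality clauses are not needed here is also accurate --- indeed the paper notes that no extensional $o$ exists for $\mathbf{O}$, yet the corollary still holds.
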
 

Compared to other classes, the method of enumerating the class $\mathbf{I}$ of interporable c.e.~relations is not at all clear.
The complexity of interpolability is one of the reasons why it is difficult to enumerate $\mathbf{I}$ in a simple way.

\begin{proposition}\label{prop:interpolable-pi-0-2-complete}
Deciding whether a given c.e.~transitive relation is interpolable is $\Pi^0_2$-complete.
\end{proposition}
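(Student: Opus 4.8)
\section*{Proof proposal}

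The plan is to prove the two directions separately, and the $\Pi^0_2$ upper bound is the easy half: I would simply read off the complexity of the definition. Unfolding ``$\prec$ is interpolable'' gives
\[\forall y\,\forall a\,\forall b\,\big[(a\prec y \,\&\, b\prec y)\to \exists c\,(a\prec c \,\&\, b\prec c \,\&\, c\prec y)\big],\]
which also covers the case $a=b$ needed for directedness. Since $\prec$ is (uniformly) c.e., the hypothesis $a\prec y\,\&\,b\prec y$ is $\Sigma^0_1$, say $\exists s\,\theta$ with $\theta$ decidable, and the conclusion $\exists c(\cdots)$ is $\Sigma^0_1$; thus the implication rewrites as $\forall s\,[\theta\to\exists c(\cdots)]$, whose matrix is $\Sigma^0_1$, and prefixing the universal quantifiers over $y,a,b$ (merging them with $\forall s$) yields a $\Pi^0_2$ formula. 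Applied to the computable numbering $\{V_{t(n)}\}$ of $\mathbf{T}$ from Corollary~\ref{computable}, this shows that $\{n\mid V_{t(n)}\text{ is interpolable}\}$ is $\Pi^0_2$.

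For hardness I would reduce the totality problem $\mathrm{Tot}=\{e\mid\varphi_e\text{ is total}\}$, which is $\Pi^0_2$-complete. Given $e$, I build (uniformly) a c.e.\ transitive relation $\prec_e$ as a disjoint union of blocks $B_n$ ($n\in\om$) with \emph{no} relations between distinct blocks, so that every lower set $\{x\mid x\prec_e y\}$ lives inside a single block and hence $\prec_e$ is interpolable iff each $B_n$ is. Block $B_n$ uses four fresh points $a_n,b_n,w_n,c_n$. From the start I enumerate $a_n\prec w_n$, $b_n\prec w_n$ together with the self-loops $a_n\prec a_n$, $b_n\prec b_n$ (the self-loops merely keep lower sets nonempty, in case directedness is read as requiring nonemptiness). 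As long as $\varphi_e(n)\uparrow$, the lower set $\{x\mid x\prec w_n\}=\{a_n,b_n\}$ admits no $\prec$-upper bound of the pair $(a_n,b_n)$, so $B_n$ is \emph{not} interpolable. If and when $\varphi_{e,s}(n)\downarrow$, I enumerate $a_n\prec c_n$, $b_n\prec c_n$, $c_n\prec w_n$ and the self-loop $c_n\prec c_n$; the self-loop turns $c_n$ into a $\prec$-top of $\{a_n,b_n,c_n\}$, which repairs directedness of every lower set of $B_n$ (those of $w_n$ and of $c_n$ become $\{a_n,b_n,c_n\}$ with top $c_n$). Hence $B_n$ is interpolable iff $\varphi_e(n)\downarrow$, so $\prec_e$ is interpolable iff $\varphi_e$ is total. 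The relation is already transitive as written (its transitive closure adds nothing new) and is uniformly c.e.\ in $e$, yielding the desired many-one reduction $e\mapsto\prec_e$.

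The main obstacle, and the point deserving the most care, is the directedness condition itself: because $\prec$ behaves like a strict relation, directedness of $\{x\mid x\prec y\}$ requires an upper bound lying \emph{inside} the set, so one cannot ``cap'' the pair $(a_n,b_n)$ with the target $w_n$, which is not below itself. This is precisely why the enumerated interpolant must carry a self-loop $c_n\prec c_n$: only then does $c_n$ serve as a common upper bound for all pairs of the repaired lower set. I must also check that the default (non-halting) configuration genuinely fails directedness at the cross pair $(a_n,b_n)$ and that the background self-loops do not accidentally repair it, together with the routine facts that distinct blocks never interact (so interpolability is decided blockwise) and that transitivity survives the staged enumeration. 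Combining the $\Pi^0_2$ upper bound with the $\Pi^0_2$-hardness of $\mathrm{Tot}$ then gives $\Pi^0_2$-completeness.
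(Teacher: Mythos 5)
Your proof is correct, but your hardness argument takes a genuinely different route from the paper's. For the upper bound you both just read off the quantifier complexity (one small caveat: if, as in standard domain theory, ``directed'' includes non-emptiness of each initial segment, your displayed formula needs the extra conjunct $\forall y\exists x\,(x\prec y)$, which is again $\Pi^0_2$, so the bound is unaffected). For completeness, the paper does not build a repair construction at all: it reduces an arbitrary $\Pi^0_2$ predicate $\varphi(n)\equiv\forall a\exists b\,\theta(n,a,b)$ directly by restricting the strict dense order $<_{\mathbb{Q}}$ to the c.e.\ set $Q_n=\{q_i:\forall a<i\,\exists b\,\theta(n,a,b)\}$, so that $Q_n=\mathbb{Q}$ (interpolable) when $\varphi(n)$ holds and $Q_n$ is finite otherwise, using that a strict order on a finite set is never interpolable. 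That argument is shorter and needs no stage-by-stage verification, but note that in the degenerate case $Q_n=\{q_0\}$ all initial segments are empty, so it implicitly relies on the convention that directed sets are non-empty; your block construction, with its self-loops $a_n\prec a_n$, $b_n\prec b_n$, $c_n\prec c_n$ and the explicitly un-capped pair $(a_n,b_n)$ below $w_n$, is robust under either reading of directedness, at the cost of reducing from $\mathrm{Tot}$ (equivalent, since $\mathrm{Tot}$ is $\Pi^0_2$-complete) and of the bookkeeping you rightly flag: transitivity of the final relation, blockwise locality of lower sets, and the necessity of the self-loop on the interpolant $c_n$ because the target $w_n$ cannot serve as an upper bound inside its own initial segment. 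Both reductions are sound; the paper's buys brevity, yours buys convention-independence and a transparent local certificate of where interpolability fails.
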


\begin{proof}
It is clear that this decision is $\Pi^0_2$.
For completeness, consider the standard strict ordering $<_\mathbb{Q}$ of the rational numbers $\mathbb{Q}=\{q_i:i\in\N\}$, which is clearly interpolable.
For any $\Pi^0_2$ formula $\varphi(n)\equiv\forall a\exists b\theta(n,a,b)$, consider the restriction of $<_\mathbb{Q}$ to $Q_n=\{q_i:\forall a<i\exists b\theta(n,a,b)\}$, which is c.e.
If $\varphi(n)$ is true, then $Q_n=\mathbb{Q}$; otherwise $Q_n$ is finite.
Note that the strict order $<_\mathbb{Q}$ restricted to a finite set cannot be interpolable.
\end{proof}

We do not know whether the class $\mathbf{I}$ has a computable numbering but by Proposition \ref{prop:interpolable-pi-0-2-complete} we can define a natural non-computable one $\{V_{i(n)}\}$ where $i$ is the $\emptyset''$-computable function which enumerates the $\Pi^0_2$-set $\{m\mid V_{t(m)}\in\mathbf{I}\}$ in the increasing order.
For the moment, let $j,c$ be $\emptyset''$-computable functions which enumerate the $\Sigma^0_3$-sets $\{m\mid V_{t(m)}\text{ is computable}\}$ and  $\{m\mid V_{p(m)}\text{ is computable}\}$, respectively.
We return to this issue again in Section \ref{enumerating-continuous-domains} to show that a subclass of $\mathbf{I}$ large enough to represent all homeomorphism types of effective $\omega$-continuous domains has a computable numbering.

Theorem 11 in \cite{br1}, Corollary \ref{computable}, and Propositions 3,4 in \cite{s20} imply that $\{\I{V_{t(n)}}\}$,  $\{\I{V_{p(n)}}\}$, $\{\I{V_{o(n)}}\}$, $\{\I{V_{c(n)}}\}$ are  numberings of all (up to $\simeq_e$) EQP-spaces,  positive \ algebraic domains,   c.e.\ algebraic domains, and  computable\ algebraic domains,  respectively (see \cite{s20} for precise definitions and a discussion of these  classes of domains); we sometimes denote these numberings by $\iota,\alpha,\beta,\gamma$, respectively.
Sequences $\{\I{V_{i(n)}}\}$ and $\{\I{V_{j(n)}}\}$ are  numberings of natural classes of $\omega$-continuous domains, which we also denote  by $\delta$ and $\varepsilon$, respectively. Below is a summary of the introduced numberings.


\begin{itemize}
\item
$\mu$: Standard numbering of ${\Pi}^0_2$-subspaces of $[0,1]^\om$.
\item
$\pi$: Standard numbering of ${\Pi}^0_2$-subspaces of $\mathcal{P}\om$.
\item
$\iota$: Numbering of EQP-spaces derived from the computable numbering  $\{V_{t(n)}\}$ of c.e. transitive relations ($\mathbf{T}$).
\item
$\alpha$: Numbering of positive algebraic domains derived from the computable numbering $\{V_{p(n)}\}$ of  c.e. preorders ($\mathbf{P}$).
\item
$\beta$: Numbering of c.e. algebraic domains derived from the computable numbering $\{V_{o(n)}\}$ of c.e. partial orders ($\mathbf{O}$).
\item
$\gamma$: Numbering of computable algebraic domains derived from the $\emptyset''$-computable numbering of computable partial orders.
\item
$\delta$: Numbering of $\omega$-continuous domains derived from the $\emptyset''$-computable numbering $\{I(V_{i(n)})\}$ of interpolable c.e.\ relations ($\mathbf{I}$).
\item
$\varepsilon$: Numbering of $\omega$-continuous domains derived from the $\emptyset''$-computable numbering $\{I(V_{j(n)})\}$ of interpolable computable relations.
\end{itemize}

The next proposition compares the introduced numberings under the following preorder on the numberings of effective spaces: $\nu\leq_e\nu'$, if $\nu(n)\simeq_e\nu'(f(n))$ for some computable function $f$; let $\equiv_e$ be the equivalence relation induced by $\leq_e$. For an oracle $h$, let $\leq^h_e$ and $\equiv^h_e$ be the $h$-relativizations of $\leq_e$ and $\equiv_e$, respectively. The presence of oracles in some of the reductions below is explained by the fact that numberings $\gamma,\delta,\varepsilon$ are defined in a less constructive way than the other numberings. 

\begin{proposition}\label{numb_compare}
We have: $\mu\leq_e\pi\equiv_e\iota$, $\beta\leq_e\alpha\leq_e\iota$, $\varepsilon\leq^{\emptyset''}_e\delta\leq^{\emptyset''}_e\iota$, and $\gamma\leq^{\emptyset''}_e\alpha$. The binary operations of product and coproduct are represented by computable functions in any of the  numberings $\mu,\pi,\iota,\alpha,\gamma$ (again, up to $\equiv_e$).
 \end{proposition}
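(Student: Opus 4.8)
The plan is to establish each of the displayed reductions in turn, producing for each a computable (or oracle-computable) function that transforms an index in one numbering into an index of an $\simeq_e$-equivalent space in another. Since the numberings $\iota,\alpha,\beta$ are all derived from ideal presentations via the computable index-transformers $t,p,o$ of Proposition~\ref{numb_rel}, and since the constructions in Propositions~\ref{compstrict}, \ref{comppo}, and \ref{idealefalg} are all effective, most reductions reduce to noting that the relevant relation-to-relation construction is uniformly computable on indices. I would handle the chain $\mu\leq_e\pi\equiv_e\iota$ first, then the domain reductions $\beta\leq_e\alpha\leq_e\iota$, then the three oracle reductions, and finally the product/coproduct claim.

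For $\mu\leq_e\pi$, the function $\mu(n)=e(\pi_X(n))$ from Section~\ref{num} already exhibits each effective Polish space as a $\Pi^0_2$-subspace of $P\omega$ via the canonical embedding $e$, so the reduction is essentially definitional; I would only check that passing from an index of a $\Pi^0_2$-subset of $[0,1]^\omega$ to an index of its $e$-image in $P\omega$ is computable, which follows from computability of $e$. For $\pi\equiv_e\iota$, the left-to-right direction uses that a $\Pi^0_2$-subspace of $P\omega$ is EQP and hence, by Theorem~11 of \cite{br1}, computably homeomorphic to $\I{\prec}$ for a c.e.\ transitive $\prec$ obtainable from the $\Pi^0_2$-index; composing with $t$ lands in the range of $\iota$. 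The converse direction uses the canonical embedding of $\I{\prec}$ into $P\omega$ (each $\I{\prec}$ is EQP, hence a $\Pi^0_2$-subspace of $P\omega$ up to $\simeq_e$), and one must verify this $\Pi^0_2$-index is computable from $n$. For $\alpha\leq_e\iota$ and $\beta\leq_e\alpha$, I would use that $\mathbf{O}\subseteq\mathbf{P}\subseteq\mathbf{T}$: an index for a c.e.\ partial order is already an index for a c.e.\ preorder, which is already an index for a c.e.\ transitive relation, so after applying the appropriate closure functions these reductions are given by composing the functions $o,p,t$, making them outright computable (indeed the spaces are literally equal, not merely $\simeq_e$).

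For the oracle reductions $\varepsilon\leq^{\emptyset''}_e\delta\leq^{\emptyset''}_e\iota$ and $\gamma\leq^{\emptyset''}_e\alpha$, the $\emptyset''$ oracle is forced because the numberings $\gamma,\delta,\varepsilon$ are indexed through $\emptyset''$-computable enumerations of $\Sigma^0_3$ or $\Pi^0_2$ sets (namely $\{m\mid V_{t(m)}\in\mathbf{I}\}$ and the computability predicates). Given a $\delta$-index $n$, I recover the underlying $\iota$-index $i(n)$ by running the $\emptyset''$-computable function $i$; since $\I{V_{i(n)}}$ is by definition an EQP-space presented exactly as in the range of $\iota$, this is an $\emptyset''$-computable reduction. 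The reductions $\varepsilon\leq^{\emptyset''}_e\delta$ and $\gamma\leq^{\emptyset''}_e\alpha$ are analogous, using that interpolable computable relations form a subclass of interpolable c.e.\ relations, and computable partial orders a subclass of c.e.\ preorders, together with the $\emptyset''$-computability of the enumerating functions $j$ and $c$ relative to $i$ and $p$. For the product/coproduct claim, I would invoke the product and equalizer constructions of \cite{br} already used in Corollary~\ref{cor:extend_countable_closed}: given two c.e.\ transitive relations (or preorders, etc.), one computes a c.e.\ transitive relation encoding the product $\I{\prec_1}\times\I{\prec_2}$ and another encoding the coproduct, uniformly in the two indices, and one checks these constructions preserve the defining properties of $\mathbf{T},\mathbf{P},\mathbf{O}$ (and the metric structure for $\mu$) so that the operations remain internal to each numbering class.

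The main obstacle I expect is verifying that the product and coproduct constructions stay inside each of the five classes $\mu,\pi,\iota,\alpha,\gamma$ simultaneously and with a single uniform index-transformer, rather than merely producing an EQP-space; in particular for $\alpha$ (preorders) and $\gamma$ (computable algebraic domains) one must confirm that the product of two algebraic domains is again algebraic and that the encoding relation inherits the required effectivity (c.e.\ or computable, and a preorder or partial order as appropriate), which is where the domain-theoretic bookkeeping is least automatic. A secondary subtlety is keeping the oracle bounds sharp: one must ensure no reduction secretly needs more than $\emptyset''$, which amounts to checking that every decoding step (recovering the base relation from a $\gamma$-, $\delta$-, or $\varepsilon$-index) is performed by the already-fixed $\emptyset''$-computable enumerating functions and composed only with genuinely computable relation transformers.
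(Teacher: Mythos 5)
Your route coincides with the paper's own (very terse) proof on essentially every count: the paper disposes of $\pi\equiv_e\iota$ by the effectivity of the proofs of Theorem 11 in \cite{br1} and Theorem 3 in \cite{br2}, obtains $\beta\leq_e\alpha\leq_e\iota$ and the three oracle reductions by unwinding the definitions of the numberings together with the index transformers $t,p,o$ of Proposition~\ref{numb_rel} and the $\emptyset''$-computable decoding functions $i,j,c$, and settles the product/coproduct claim by pointing to the constructions of Sections 3.1--3.2 of \cite{br2}; your decode-then-search bookkeeping for $\varepsilon\leq^{\emptyset''}_e\delta\leq^{\emptyset''}_e\iota$ and $\gamma\leq^{\emptyset''}_e\alpha$ is exactly the intended argument, as is your use of the inclusions $\mathbf{O}\subseteq\mathbf{P}\subseteq\mathbf{T}$.

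The one step whose stated justification would fail is $\mu\leq_e\pi$. You claim the reduction ``follows from computability of $e$'', but computability of the canonical embedding cannot be the reason: $e$ is computable for \emph{every} effective space, yet $e(X)$ is a $\Pi^0_2$ subset of $P\omega$ precisely when $X$ is EQP, so for a non-EQP effective space the computable map $e$ carries the whole space to a non-$\Pi^0_2$ image. The definition of $\mu$ only exhibits $\mu(n)=e(\pi_{[0,1]^\omega}(n))$ as a subspace of $P\omega$; to land in the range of $\pi$ one must produce, computably in $n$, a $\Pi^0_2$-index for (a space effectively homeomorphic to) this image. The missing ingredient --- the one the paper actually invokes --- is Theorem 1 of \cite{hoy}: since the Hilbert cube is a computable Polish space, its canonical image $e([0,1]^\omega)$ is a $\Pi^0_2$ subset of $P\omega$ with a fixed computable index, and one then combines this with the (uniform) fact that a set which is effectively $\Pi^0_2$ relative to a $\Pi^0_2$ subspace is effectively $\Pi^0_2$ in the ambient space, converting a $\Pi^0_2$-index of $A\subseteq[0,1]^\omega$ into a $\pi$-index of $e(A)$. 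With this substitution the reduction goes through, and the rest of your proposal needs no repair beyond the verifications you already flag for the product/coproduct claim (in particular, for $\alpha$ and $\gamma$, that the encoding relations inherit the required order-theoretic and effectivity properties up to $\simeq_e$).
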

 
\begin{proof} The relation $\pi\equiv_e\iota$ follows from the effectivity of proofs of Theorem 11 in \cite{br1} and Theorem 3 in \cite{br2}. The relation $\mu\leq_e\pi$ follows from Theorem 1 in \cite{hoy} because the Hilbert cube is a computable Polish space. The remaining relations follow from  the definition of the numbering and of functions $i,j,c$, and from Proposition \ref{numb_rel}. The assertion about product and coproduct is checked in a straightforward way, similar to Sections 3.1 and 3.2 in \cite{br2}.
 \end{proof}

May the non-computable numberings $\gamma,\delta,\epsilon$ be improved to computable numberings of the corresponding classes of EQP-spaces? In Section \ref{enumerating-continuous-domains} we give a positive answer for the case of $\delta$.


\section{Enumerating continuous domains}\label{enumerating-continuous-domains}

As shown in Proposition \ref{idealdom}, $\omega$-continuous domains can be represented by interpolable transitive relations.
However, as described in Section \ref{num} (also in \cite{bks22}), it is not at all clear how to provide a computable numbering for all c.e.~interpolable relations.
In this section, we address this issue.
Although it is still unknown whether there exists a computable enumeration of all c.e.~interpolable transitive relations on $\N$, we will show in this section that there exists a computable enumeration of the corresponding $\omega$-continuous domains (up to computable homeomorphism).
This was unknown at the time of \cite{bks22}, and we present the result here for the first time.

\begin{lemma}\label{lem:relation-map-to-homeo}
Let $(X, \prec_X)$ and $(Y, \prec_Y)$ be sets equipped with (c.e.) transitive relations. Assume $f\colon X\to Y$ is a (computable) function satisfying
\begin{itemize}
\item
$f$ is surjective, and
\item
$x \prec_X x' \iff f(x) \prec_Y f(x')$.
\end{itemize}
Let $G_f = \{ \langle x, f(x) \rangle \mid x \in X\}$ be the graph of $f$. Then $\name{G_f}\colon \I{\prec_X}\to\I{\prec_Y}$ is a (computable) homeomorphism.
\end{lemma}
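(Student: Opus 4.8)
The plan is to observe first that $\name{G_f}$ is nothing but the image map $I\mapsto f[I]$. Indeed, unwinding the definition of $\name{G_f}$,
\[
\name{G_f}(I)=\{\,n\mid(\exists m\in I)\,\langle m,n\rangle\in G_f\,\}=\{\,f(m)\mid m\in I\,\}=f[I].
\]
The natural candidate for its inverse is the preimage map $K\mapsto f^{-1}[K]=\{x\in X\mid f(x)\in K\}$. I would establish the claim in four steps: (a) $\name{G_f}$ is well-defined, i.e.\ $f[I]\in\I{\prec_Y}$ whenever $I\in\I{\prec_X}$; (b) $f^{-1}[K]\in\I{\prec_X}$ whenever $K\in\I{\prec_Y}$, and the two maps are mutually inverse, so $\name{G_f}$ is a bijection; (c) both maps are continuous; (d) in the effective case both have c.e.\ codes, so both are computable by Theorem~\ref{theorem:cont_func_code}.

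For (a), non-emptiness and $\prec_Y$-directedness of $f[I]$ follow directly from the corresponding properties of $I$ together with the preservation half $x\prec_X x'\Rightarrow f(x)\prec_Y f(x')$. The lower-set property is where surjectivity of $f$ enters: given $b\prec_Y f(x)$ with $x\in I$, write $b=f(y)$ using surjectivity, deduce $y\prec_X x$ from the reflection half, and conclude $y\in I$ and hence $b=f(y)\in f[I]$. The verification that $f^{-1}[K]$ is an ideal in (b) is symmetric, again using surjectivity of $f$ to realize the non-emptiness and directedness witnesses as $f$-images.

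I expect the main obstacle to be injectivity of $\name{G_f}$, since $f$ need not be injective. The key device is the elementary but crucial fact that in any ideal of a transitive relation every element lies strictly below another element of the ideal (apply the directedness clause to a pair $a,a$). Thus, to prove $f[I]=f[J]\Rightarrow I\subseteq J$, I take $x\in I$, choose $z\in I$ with $x\prec_X z$, and use $f(z)\in f[I]=f[J]$ to find $w\in J$ with $f(z)=f(w)$; then $f(x)\prec_Y f(z)=f(w)$ reflects to $x\prec_X w$, and the lower-set property of $J$ gives $x\in J$. Surjectivity of $\name{G_f}$ is then immediate from $f[f^{-1}[K]]=K$, whose nontrivial inclusion again uses surjectivity of $f$. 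For continuity I would simply compute preimages of basic open sets: $\name{G_f}^{-1}([b]_{\prec_Y})=\bigcup_{f(x)=b}[x]_{\prec_X}$ is open, and the preimage under the inverse map of $[a]_{\prec_X}$ equals the single basic open $[f(a)]_{\prec_Y}$. Finally, in the effective case the graph $G_f$ and its transpose $\{\,\langle f(x),x\rangle\mid x\in X\,\}$ are c.e.\ codes for $\name{G_f}$ and its inverse respectively, so Theorem~\ref{theorem:cont_func_code} yields that $\name{G_f}$ is a computable homeomorphism.
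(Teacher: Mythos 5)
Your proposal is correct and follows essentially the same route as the paper's proof: you identify $\name{G_f}$ with the image map, use surjectivity for the lower-set/directedness verifications, and your injectivity argument (take $z\in I$ with $x\prec_X z$ by directedness applied to the pair $x,x$, then reflect $f(x)\prec_Y f(z)=f(w)$ back to $x\prec_X w$) is exactly the paper's key step, phrased there as the equivalence $x\in I \iff f(x)\in\name{G_f}(I)$. The one small refinement on your side is exhibiting the transposed graph $\{\langle f(x),x\rangle \mid x\in X\}$ as an explicit c.e.\ code for the inverse via Theorem~\ref{theorem:cont_func_code}, where the paper leaves computability of the inverse implicit in the basic-open correspondence $I\in[x]_{\prec_X}\iff\name{G_f}(I)\in[f(x)]_{\prec_Y}$.
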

\begin{proof}
First we show that $x \in I$ if and only if $f(x) \in \name{G_f}(I)$. If $x \in I$ then $f(x) \in \name{G_f}(I) $ by definition of $\name{G_f}$. Conversely, if $f(x) \in \name{G_f}(I)$, then there must be some $x' \in I$ with $f(x')=f(x)$. By directedness of $I$ there is $x'' \in I$ with $x' \prec_X x''$. Then $f(x) = f(x') \prec_Y f(x'')$ hence our assumption on $f$ implies $x \prec_X x''$, and it follows from $I$ being a lower set that $x \in I$.

Next we show that $\name{G_f}(I) \in \I{\prec_Y}$ for each $I\in\I{\prec_X}$. We use the assumption that $f$ is surjective in the following.
\begin{enumerate}
\item
(\emph{$\name{G_f}(I)$ is non-empty}): There is $x\in I$, hence there is $f(x) \in \name{G_f}(I)$.
\item
(\emph{$\name{G_f}(I)$ is a lower set}): Assume $f(x) \prec_Y f(x') \in \name{G_f}(I)$. Then $x \prec_X x' \in I$, hence $x\in I$. Therefore, $f(x) \in \name{G_f}(I)$.
\item
(\emph{$\name{G_f}(I)$ is directed}): Assume $f(x_0), f(x_1) \in \name{G_f}(I)$. Then $x_0,x_1 \in I$ so there is $x\in I$ with $x_0 \prec_X x$ and $x_1 \prec_X x$. Therefore, $f(x) \in\name{G_f}(I)$ and $f(x_0) \prec_Y f(x)$ and $f(x_1) \prec_Y f(x)$.
\end{enumerate}
It follows from Theorem~\ref{theorem:cont_func_code} that  $\name{G_f}\colon \I{\prec_X}\to\I{\prec_Y}$ is a total computable function.

Next we show that $\name{G_f}$ is surjective. Given $J \in \I{\prec_Y}$, set $I = f^{-1}(J)$. Then
\begin{enumerate}
\item
$I\not=\emptyset$ because $J \not=\emptyset$ and $f$ is surjective.
\item
$I$ is a lower set, because if $x \prec_X x' \in I$ then $f(x) \prec_Y f(x') \in J$, hence $f(x) \in J$, which implies $x\in I$.
\item
$I$ is directed, because if $x_0, x_1 \in I$, then there is $y \in J$ with $f(x_0) \prec_Y y$ and $f(x_1) \prec_Y y$, so from the surjectivity of $f$ there is $x \in I$ with $f(x) = y$ and satisfying $x_0 \prec_X x$ and $x_1 \prec_X x$.
\end{enumerate}

Finally, since $I \in [x]_{\prec_X}$ if and only if $\name{G_f}(I) \in [f(x)]_{\prec_Y}$, it follows that $\name{G_f}$ is a computable homeomorphism.
\end{proof}

\begin{theorem}
There exists a computable enumeration of all EQP $\omega$-continuous domains.
\end{theorem}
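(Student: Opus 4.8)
===PLAN===

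The goal is to produce a \emph{single} computable sequence of c.e.\ transitive relations $\{\sqsubset_n\}$ such that (i) each $\I{\sqsubset_n}$ is an $\omega$-continuous domain, and (ii) every EQP $\omega$-continuous domain is computably homeomorphic to some $\I{\sqsubset_n}$. The obstruction identified in Section~\ref{num} is that interpolability is $\Pi^0_2$-complete (Proposition~\ref{prop:interpolable-pi-0-2-complete}), so one cannot simply filter the computable enumeration $\{V_{t(n)}\}$ down to the interpolable ones. The plan is therefore to avoid filtering and instead use Lemma~\ref{lem:relation-map-to-homeo} to \emph{force} each candidate relation into an interpolable one while preserving its ideal space whenever it was already interpolable. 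The strategy mirrors the ``completion'' trick of Proposition~\ref{compstrict}, but now engineered so that the constructed relation is always interpolable.

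First I would start from the computable numbering $\{V_{t(n)}\}$ of all c.e.\ transitive relations $\prec$ (Corollary~\ref{computable}). For each such $\prec$ I would build a new c.e.\ transitive relation $\sqsubset_n$ on a computable index set (e.g.\ finite sequences or pairs over $\omega$, as in Propositions~\ref{compstrict} and the extension theorem) that is \emph{interpolable by construction}: the standard device is to take as elements finite ``approximation data'' and to define the relation so that between any two comparable elements one can always insert an intermediate one, e.g.\ by tagging each element with a stage/size parameter and requiring strict growth, which guarantees each down-set $\{x\mid x\sqsubset_n y\}$ is directed. The key design requirement is a surjective map $f$ from the elements of $\sqsubset_n$ back onto the elements of $\prec$ satisfying the equivalence $x\sqsubset_n x' \iff f(x)\prec f(x')$ exactly when $\prec$ is genuinely interpolable; Lemma~\ref{lem:relation-map-to-homeo} then delivers a computable homeomorphism $\I{\sqsubset_n}\cong_e\I{\prec}$. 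Since each $\I{\sqsubset_n}$ is an $\omega$-continuous domain (Proposition~\ref{idealdom}(1)) regardless of whether $\prec$ was interpolable, the enumeration $\{\I{\sqsubset_n}\}$ lists only $\omega$-continuous domains; and when $\prec$ \emph{was} interpolable the homeomorphism shows its domain appears in the list. As every EQP $\omega$-continuous domain arises as $\I{\prec}$ for some interpolable c.e.\ $\prec$ (Proposition~\ref{idealdom}(1) effectivized via Theorem~11 of \cite{br1}), completeness of the enumeration follows.

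The main obstacle I expect is arranging the construction so that the hypotheses of Lemma~\ref{lem:relation-map-to-homeo} hold \emph{precisely} in the interpolable case, without any $\Pi^0_2$ knowledge of whether $\prec$ is interpolable. In other words, the defining clauses of $\sqsubset_n$ must be uniformly c.e.\ and transitive for \emph{every} input $\prec$, yet the back-map $f$ and the biconditional $x\sqsubset_n x'\iff f(x)\prec f(x')$ can only be expected to behave well when directedness of the initial segments of $\prec$ actually holds. The delicate point is that a non-interpolable $\prec$ must still yield a \emph{legitimate} interpolable relation $\sqsubset_n$ (so that $\I{\sqsubset_n}$ is a genuine $\omega$-continuous domain and the enumeration stays sound), merely one whose ideal space need not equal $\I{\prec}$. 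I would handle this by building interpolability into $\sqsubset_n$ structurally—via the insertion/tagging mechanism—so that transitivity and directedness of down-sets are automatic, while routing all dependence on $\prec$ through clauses that only witness the required $\prec$-relationships when $\prec$ supplies the relevant interpolants. Verifying transitivity of $\sqsubset_n$ (analogous to the verification of clause $(5)$ in the extension theorem) and checking the equivalence in Lemma~\ref{lem:relation-map-to-homeo} under the interpolability hypothesis are the technical hearts of the argument; the rest is the now-routine ideal-theoretic bookkeeping seen in Propositions~\ref{compstrict} and~\ref{comppo}.
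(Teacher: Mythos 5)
Your architecture coincides exactly with the paper's: enumerate all c.e.\ transitive relations via $\{V_{t(n)}\}$, transform each $\prec$ uniformly into a relation that is interpolable by construction, and use Lemma~\ref{lem:relation-map-to-homeo} with a surjective, relation-reflecting map back onto $\prec$ to recover $\I{\prec}$ computably in the interpolable case, with soundness in the non-interpolable case coming from the transformed relation still being c.e.\ and interpolable (Proposition~\ref{idealdom}(1)). So the skeleton and the key lemma are right. The gap is that the transform --- which you yourself flag as the technical heart --- is where all the content lies, and the one concrete device you gesture at (static finite-approximation elements carrying stage/size tags with strict growth, as in Proposition~\ref{compstrict}) cannot deliver it. Indeed, the relation $\sqsubset$ of Proposition~\ref{compstrict} satisfies $\I{\sqsubset}\simeq_e\I{\prec}$ for \emph{every} c.e.\ transitive $\prec$; if such tagging forced interpolability of $\sqsubset$, then by Proposition~\ref{idealdom}(1) every EQP space would be an $\omega$-continuous domain, which is false. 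Strict growth of tags by itself says nothing about directedness of the down-sets $\{x\mid x\sqsubset y\}$.

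The missing idea --- what the paper actually does --- is an inherently \emph{dynamic} construction rather than a static clause-by-clause definition. Interpolability is forced by inserting a fresh dummy symbol $w_k$ strictly between each finite set $F_n$ and observed upper bound $y_n$ as soon as $F_n\prec_X y_n$ appears, and each dummy is later \emph{replaced} by a genuine element $z$ of $Y$ found by unbounded search once the enumeration of $\prec_Y$ reveals an interpolant; the back-map $f$ is defined on the even numbers by $f(2n)=n$ (giving surjectivity) and extended to a dummy only at the moment of its replacement. Two features of this mechanism have no static counterpart in your sketch: (i) dummies are replaced least-first, one at a time, which guarantees that when $\prec_Y$ is interpolable every activated dummy is eventually replaced --- so $f$ is total --- and that $f$ is already defined on $F_n\cup\{y_n\}$ when the search for a replacement is launched; (ii) the biconditional $x\prec_X x'\iff f(x)\prec_Y f(x')$ is maintained by a dedicated substage adding $x\prec_X x'$ whenever $f(x)\prec_Y f(x')$, followed by transitive closure, and its verification is a case analysis on which substage introduced a given pair. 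Without this replacement discipline (or an equivalent), your requirement that the clauses ``only witness the required $\prec$-relationships when $\prec$ supplies the relevant interpolants'' remains a specification of what must be built, not a construction; as it stands, the proposal correctly identifies the strategy but leaves the theorem's essential step unproved.
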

\begin{proof}
Let $(Y, \prec_Y)$ be a set equipped with a c.e. transitive relation. We will define a c.e. subset $X\subseteq \N$ and c.e. interpolable transitive relation $\prec_X$ on $X$ and a computable function $f\colon X\to Y$. We think of $2n \in \N$ as corresponding to $n \in Y$, and view $2n+1 \in \N$ as being a \emph{dummy symbol} $w_n$.

Let $\prec_Y^{(s)}$ be a finite approximation of $\prec_Y$ at stage $s$. Let $(F_n,y_n)_{n\in\N}$ be an enumeration of $\calP_{fin}(\N)\times \N$, such that $x \in F_n$ implies $x <n$. Define the partial function $f^{(0)} :\subseteq \N \to Y$ as having only even numbers in its domain, and set $f^{(0)}(2n) = n$. Let $\prec_X^{(0)}$ be the empty binary relation on $\N$.

At each stage $s$, each dummy symbol $w_k$ is \emph{inactive} at stage $0$, but may become \emph{active} at a later stage. An active dummy symbol $w_k$ may later become \emph{replaced}. A pair $(F_n,y_n)$ \emph{requires attention} at stage $s$ if each element of $F_n \cup\{y_n\}$ is active (and/or replaced) at stage $s$ and furthermore $x \prec_X^{(s)} y_n$ holds for each $x \in F_n$. Then we say that $z$ solves such $(F_n,y_n)$ if $x \prec_X^{(s)} z \prec_X^{(s)} y_n$ holds for each $x \in F_n$.

Do each of the following substages for each stage $s\geq 0$.
\begin{itemize}
\item
Substage 1: Let $n<s$ be least unsolved pair $(F_n,y_n)$ requiring attention (if such a pair does not exist then go to the next substage). Let $w_k$ be the least inactive dummy, and set $x \prec_X^{(s+1)} w_k \prec_X^{(s+1)} y_n$ for each $x \in F_n$. Declare $w_k$ active and $(F_n,y_n)$ solved.
\item
Substage 2: If $w_k$ is the least dummy symbol that is active but not replaced, and $w_k$ solves $(F_n, y_n)$, then we can assume $f^{(s)}$ is already defined for all elements in $F_n \cup \{y_n\}$, so we search for $z\in Y$ with $f(x) \prec_Y^{(s+1)} z \prec_Y^{(s+1)} f(y_n)$ for each $x \in F_n$. If such $z$ is found, then declare $w_k$ to be replaced and extend $f^{(s)}$ to $f^{(s+1)}$ by adding $2k+1$ to its domain and defining $f^{(s+1)}(2k+1)=z$. If no such $z$ is found then set $f^{(s+1)}=f^{(s)}$.
\item
Substage 3: Further extend $\prec_X^{(s+1)}$ so that $x \prec_X^{(s+1)} x'$ whenever $f(x)$ and $f(x')$ are defined and $f(x) \prec_Y^{(s+1)} f(x')$.
\item
Substage 4: Complete $\prec_X^{(s+1)}$ by adding  $\prec_X^{(s)}$ and taking the transitive closure. Then go to stage $s+1$.
\end{itemize}
Set
\[X=\{ 2n \mid n\in\N\} \cup \{ 2n+1 \mid w_n \text{ becomes active at some stage}\}.\]
Define $\prec_X = \bigcup_{s\in\N}\prec_X^{(s)}$ and $f = \bigcup_{s\in\N} f^{(s)}$. It is clear that $X$ and $\prec_X$ are c.e. Furthermore, Substage~4 guarantees that $\prec_X$ is transitive, and Substage~1 guarantees that it is interpolable.

Next, consider the case that $\prec_Y$ is interpolable. Then $f\colon X\to Y$ is a total function because each activated dummy symbol is eventually replaced in Substage~2. It is also clear that $f$ is computable, and that it is a surjection because $f(2n)=n$. Furthermore, Substage~3 guarantees that $f(x)\prec_Y f(x')$ implies $x \prec_X x'$. We show that conversely, if $x \prec_X x'$ then $f(x) \prec_Y f(x')$. We consider the different cases that $x \prec_X x'$ was added to the relation $\prec_X$ at some stage:
\begin{enumerate}
\item
If $x \prec_X x'$ was added in Substage~1, then $x$ (or $x'$) is a dummy symbol $w_k$ and $x'$ (or $x$) is from some pair $(F_n, y_n)$. Then by assumption $w_k$ is replaced in some later stage in Substage~2, and the replacement is done in such a way that $f(x) \prec_Y f(x')$ holds.
\item
If $x \prec_X x'$ was added in Substage~3 then trivially $f(x) \prec_Y f(x')$.
\item
The last possibility is that $x \prec_X x'$ was added in Substage~4 when completing under transitive closure. Then there is some $z$ such that $x \prec_X z$ and $z \prec_X x'$ were already included in $\prec_X$ before $x \prec_X x'$ was added. By induction, we can assume that $f(x) \prec_Y f(z)$ and $f(z) \prec_Y f(x')$. Therefore, $f(x) \prec_Y f(x')$ by the transitivity of $\prec_Y$.
\end{enumerate}
It follows from Lemma \ref{lem:relation-map-to-homeo} that if $\prec_Y$ is interpolable then $\I{\prec_X}$ and $\I{\prec_Y}$ are computably homeomorphic.

From the above construction, we can enumerate a sequence of c.e. interpolable transitive relations $(\prec_{X_i})_{i\in\N}$ in such a way that for every c.e. interpolable transitive relation $\prec_Y$ there is some $i\in\N$ with  $\I{\prec_{X_i}}$ computably homeomorphic to  $\I{\prec_Y}$.
\end{proof}


\section{Complexity of (effective) homeomorphism}\label{comisom}

Here we estimate the complexity of (effective) homeomorphism relations $\simeq_e$ and $\simeq$ in the introduced numberings and deduce some corollaries. Similar questions for algebraic structures were  studied in detail (see e.g. \cite{gn,ffh12,ffn12}). In the next theorem we collect some estimates which for the classes of domains resemble the corresponding estimates for algebraic structures\footnote{We thank Nikolay Bazhenov for the related bibliographical hints.}, while for Polish and quasi-Polish spaces are apparently higher.

To obtain the estimate for $\iota$, we employ the representation of computable functions $f:\I{\prec_1}\to \I{\prec_2}$ between spaces of ideals, where $\prec_1,{\prec_2}\in\mathbf{T}$, established in \cite{br2}, Theorem 2 (see the end of Section \ref{func}). 

For the case $\prec_1,{\prec_2}\in\mathbf{I}$ of domains, the above representation may be simplified using the effective version of results in Section 2.2.6 of \cite{aj} (see Theorem \ref{categ} above). Namely, the  computable functions $f:\I{\prec_1}\to \I{\prec_2}$ coincide with the functions $\lceilx R\rceilx$ where $R$ is a binary c.e. relation on $\omega$ satisfying the following conditions: if $aR b$ and $a\prec_1a'$ then $a'R b$; if $aR b$ and $b'\prec_2b$ then $aR b'$; for any $a$ there is $b$ with  $aR b$; for all $a,b,b'$ with $aR b,aR b'$ there is $b''$ with $b\prec_2b'',b'\prec_2b''$, and $aR b''$; if $aR b$ then $a'R b$ for some $a'\prec_1a$.  Conjunction of these conditions is denoted as $mor(R,\prec_1,\prec_2)$ (meaning ``$R$ is a morphism from $\prec_1$ to $\prec_2$''). We note that $\lceilx R\rceilx=id_{\I{\prec_1}}$ iff $aRb\leftrightarrow b\prec_1a$.  

\begin{theorem}\label{indexhom}
\begin{enumerate}
 \item Let $\nu\in\{\alpha,\beta,\gamma,\delta,\varepsilon\}$. Then the relations $\nu(m)\simeq_e\nu(n)$ and $\nu(m)\simeq\nu(n)$ are $\Sigma^0_3$-complete and $\Sigma^1_1$-complete sets, respectively. Moreover, they are resp.  $\Sigma^0_3$- and $\Sigma^1_1$-complete equivalence relations under the computable reducibility of equivalence relations.
  \item Let $\nu\in\{\iota,\mu\}$. The relations $\nu(m)\simeq_e\nu(n)$ and $\nu(m)\simeq\nu(n)$ are $\Pi^1_1$ and $\Sigma^1_2$, respectively.
\end{enumerate}
 \end{theorem}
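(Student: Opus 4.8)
The plan is to establish the complexity estimates in Theorem \ref{indexhom} by analysing the quantifier structure of the predicates ``there exists a (computable) homeomorphism'' in each of the two numbering families, using the explicit representations of computable functions recalled at the end of Sections \ref{func} and \ref{enumerating-continuous-domains}.

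First I would handle item (1), the domain numberings $\nu\in\{\alpha,\beta,\gamma,\delta,\varepsilon\}$. For the effective homeomorphism relation $\nu(m)\simeq_e\nu(n)$, the key observation is that by Theorem \ref{categ} a computable homeomorphism between two c.e.\ algebraic (or continuous) domains is exactly a morphism $R$ with $mor(R,\prec_1,\prec_2)$ together with a morphism witnessing the inverse. Writing $\prec_1=V_{?(m)}$ and $\prec_2=V_{?(n)}$ for the appropriate index functions, the statement ``there exist c.e.\ codes $R,S$ such that $\lceilx R\rceilx$ and $\lceilx S\rceilx$ are mutually inverse morphisms'' unwinds to an existential quantifier over c.e.\ indices ($\Sigma^0_1$ in spirit) followed by the arithmetical predicate asserting $mor(R,\prec_1,\prec_2)$, $mor(S,\prec_2,\prec_1)$, and that the compositions equal the respective identities. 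Each $mor$ condition is $\Pi^0_2$ (the clauses ``for any $a$ there is $b$'' and the interpolation-type clause are $\Pi^0_2$, the monotonicity clauses are $\Pi^0_1$), and the composition-equals-identity conditions are likewise $\Pi^0_2$; prefixing the existential index quantifier gives $\Sigma^0_3$. For the topological homeomorphism $\nu(m)\simeq\nu(n)$ we cannot demand computability of the witness, so we quantify existentially over an arbitrary (hence real/second-order) bijection of the compact-element posets that preserves the order; this is a $\Sigma^1_1$ statement (``$\exists$ an isomorphism of the countable posets $(K(X);\leq)$''), and by the reduction of isomorphism of posets recalled from Proposition \ref{idealefalg} it matches the classical $\Sigma^1_1$ bound for isomorphism of c.e.\ structures.

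For the lower bounds (completeness), I would reduce known complete problems to these relations. The $\Sigma^0_3$-completeness of $\simeq_e$ should follow by coding: the index set of computable structures, or more directly the ``two c.e.\ partial orders are isomorphic via a computable map'' problem, is $\Sigma^0_3$-complete by the cited results on c.e.\ structures (\cite{gn,ffh12,ffn12}), and Proposition \ref{idealefalg} transports this verbatim to the domain numberings since a computable homeomorphism of the domains restricts to a computable isomorphism of the compact-element posets (as used in the proof of Proposition \ref{separalg}). For $\Sigma^1_1$-completeness of $\simeq$ I would invoke the classical $\Sigma^1_1$-completeness of the isomorphism relation on computable (or c.e.) linear/partial orders and again transport it along $\I{\cdot}$. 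To obtain completeness \emph{as equivalence relations under computable reducibility}, I would arrange the coding reductions to be index-to-index computable and to respect the equivalence classes, which the transitive-closure and order-coding constructions of Proposition \ref{numb_rel} already provide uniformly.

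Item (2), the Polish and quasi-Polish numberings $\nu\in\{\iota,\mu\}$, is where the estimates jump and where I expect the main obstacle. Here the representation from Theorem \ref{theorem:cont_func_code} gives that a total computable $f\colon\I{\prec_1}\to\I{\prec_2}$ is coded by a c.e.\ relation $R$ with $f=\name{R}$, but the predicate ``$\name{R}$ is total'' already ranges over all ideals $I\in\I{\prec_1}$ and asserts $\name{R}(I)\in\I{\prec_2}$, which is a $\Pi^1_1$ condition since ideals are arbitrary subsets of $\omega$ satisfying arithmetical closure conditions. Thus $\nu(m)\simeq_e\nu(n)$, asserting existence of c.e.\ codes $R,S$ making $\name{R},\name{S}$ mutually inverse total computable maps, is $\exists$(c.e.\ index)$\wedge\Pi^1_1$, which collapses to $\Pi^1_1$ because the leading number quantifier is absorbed. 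For full homeomorphism $\nu(m)\simeq\nu(n)$ we existentially quantify over an arbitrary continuous (second-order) function rather than a c.e.\ code, giving $\exists^1$ over a real followed by the $\Pi^1_1$ verification that it and a chosen inverse are continuous mutual inverses, yielding the $\Sigma^1_2$ bound. The delicate point, which I would treat carefully, is verifying that ``$\name{R}$ is a total homeomorphism'' really sits at $\Pi^1_1$ and not lower: one must check that the surjectivity and openness clauses, which quantify over ideals of $\prec_2$ and their $\prec$-directedness, do not introduce an extra existential second-order quantifier beyond what the $\Pi^1_1$ form allows. I expect this quantifier-counting over the space of ideals to be the technically fussy core of the argument, whereas item (1) reduces cleanly to the already-understood complexity theory of c.e.\ structures.
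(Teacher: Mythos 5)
Your plan for the upper bound on $\simeq_e$ in item (1) (mutually inverse c.e.\ morphisms, each $mor$ clause being $\Pi^0_2$, one existential index quantifier giving $\Sigma^0_3$), your treatment of item (2) (totality of $\name{R}$ is $\Pi^1_1$ because it quantifies over ideals; demanding a two-sided inverse code disposes of your surjectivity/openness worry, so $\simeq_e$ becomes $\exists k,l(\Pi^1_1)=\Pi^1_1$ and $\simeq$ becomes $\Sigma^1_2$ --- this is exactly how the paper argues, checking only that $\lceilx V_l\rceilx\circ\lceilx V_k\rceilx=id$ is $\Pi^1_1$), and your lower bounds (computable linear orders from \cite{gn} for sets, and \cite{ffn12,ffh12} for the equivalence-relation versions, transported along $I(\cdot)$ using that a homeomorphism of ideal spaces restricts to an isomorphism of compact elements) all coincide with the paper's proof. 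Two points you gloss over are handled explicitly there: for $\gamma,\delta,\varepsilon$ the index functions $c,i,j$ are only $\emptyset''$-computable, so one must note that conjuncts such as $x=i(m)$ are themselves $\Sigma^0_3$ and do not spoil the bound; and for $\mu$ in item (2) one must first pass to $\iota$ via $\mu\leq_e\pi\equiv_e\iota$ (Proposition~\ref{numb_compare}) before any ideal-space representation is available.

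The genuine gap is your $\Sigma^1_1$ upper bound for $\simeq$ in item (1): you propose to quantify over an arbitrary isomorphism of ``the compact-element posets''. This is sound for $\alpha,\beta,\gamma$, since an $\omega$-algebraic domain is the ideal completion of its compact poset and every homeomorphism restricts to an isomorphism of compacts, but it fails for $\delta$ and $\varepsilon$: an $\omega$-continuous domain is not determined by its compact elements, and the ideal spaces of irreflexive interpolable relations may have no compact elements at all. For instance, $\I{<_\mathbb{Q}}$ and the disjoint sum of two copies of it both have empty compact posets, yet they are not homeomorphic (the sum splits into two nonempty disjoint open sets, while $\I{<_\mathbb{Q}}$ has a top point, the ideal $\mathbb{Q}$ itself, lying in every nonempty open set). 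So your criterion would wrongly identify non-homeomorphic spaces, and the $\Sigma^1_1$ bound for $\delta,\varepsilon$ is unproved as written; note the hardness direction is unaffected, since the coded spaces $I(L_k)$ are algebraic. The repair is the machinery you already deploy for $\simeq_e$: relativize the morphism representation. The paper writes $\delta(m)\simeq\delta(n)$ iff $\exists h\,\exists k,l$ with $mor(V_k^h,V_{i(m)},V_{i(n)})$, $mor(V_l^h,V_{i(n)},V_{i(m)})$ and the two composition-equals-identity clauses; crucially, by the equivalence $\inter\simeq\cont$ these clauses involve only number quantifiers over the given relations (unlike the general quasi-Polish case, where totality of $\name{R}$ forces a universal quantifier over ideals), so the single real quantifier $\exists h$ keeps the relation $\Sigma^1_1$ rather than pushing it to $\Sigma^1_2$.
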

 
\begin{proof} 1. First we prove the upper bounds. For $\nu=\alpha$, it is easy to see (cf. proof of Theorem 2 in \cite{s20}) that $\alpha(m)\simeq_e\alpha(n)$ iff $(\om;V_{p(m)})\simeq_e(\om;V_{p(n)})$ iff
 \begin{eqnarray*}
 \exists k,l\forall x,y(\varphi_k(x)\downarrow\wedge\varphi_l(x)\downarrow\wedge(xV_{p(m)}y\leftrightarrow\varphi_k(x)V_{p(n)}\varphi_k(y))\wedge (xV_{p(n)}y\leftrightarrow\\\varphi_l(x)V_{p(m)}\varphi_l(y))\wedge xV_{p(m)}\varphi_l(\varphi_k(x))V_{p(m)}x\wedge yV_{p(n)}\varphi_k(\varphi_l(y)V_{p(n)}y)), 
 \end{eqnarray*}
 hence the relation is $\Sigma^0_3$. For the relation $\simeq$ we only have to add the functional quantifier $\exists h$ in the beginning of the above formula and relativize $\varphi$ to the oracle $h$; this yields the desired estimate $\Sigma^1_1$.

The above argument works for $\nu=\beta$ if we just replace $p$ by $o$. For $\nu=\gamma$, we also replace $p$ by $c$; it is easy to see that the $\emptyset''$-computability of $c$ does not damage the estimate $\Sigma^0_3$ and (trivially) the estimate $\Sigma^1_1$.

For $\nu=\delta$, we use the representation of computable functions between ideal spaces described before the formulation of the theorem: $\delta(m)\simeq_e\delta(n)$ iff 
 \begin{eqnarray*}
 \exists k,l,x,y(x=i(m)\wedge y=i(n)\wedge mor(V_k,V_x,V_y)\wedge mor(V_l,V_y,V_x)\wedge\\ \forall a,b(a(V_l\circ V_k)b\leftrightarrow bV_xa)\wedge \forall a,b(a(V_k\circ V_l)b\leftrightarrow bV_ya).
 \end{eqnarray*}
 Since $i$ is $\emptyset''$-computable, the first two conjuncts in the main parenthesis are $\Sigma^0_3$. Since $V_x$ is c.e., the same holds for the third and fourth conjuncts, while the fifth and sixth conjuncts are $\Pi^0_2$. This concludes the estimate for $\simeq_e$. For the relation $\simeq$ we only have to add the functional quantifier $\exists h$ in the beginning of the above formula and replace $V_k,V_l$ by $V_k^h,V_l^h$; this yields the desired estimate $\Sigma^1_1$.
The above argument (with $j$ in place of $i$) works for $\nu=\varepsilon$.

Now we prove the lower bounds. By Theorem 4.7(a) in \cite{gn}, for any $\Sigma^0_3$ set $A$  there are computable sequences $\{L_k\},\{M_k\}$ of computable linear orders on $\om$ such that $k\in A$ iff $L_k\simeq_eM_k$. By the definition of ideal spaces, $L_k\simeq_eM_k$ iff $I(L_k)\simeq_eI(M_k)$ iff $A\leq_m\{\langle k,l\rangle\mid\nu(k)\simeq_e\nu(l)\}$ for every $\nu\in\{\alpha,\beta,\gamma,\delta,\varepsilon\}$, concluding the proof for $\Sigma^0_3$.   

By Theorem 4.4(d) in \cite{gn}, for any $\Sigma^1_1$ set $A$  there are computable sequences $\{L_k\},\{M_k\}$ of computable linear orders on $\om$ such that $k\in A$ iff $L_k\simeq M_k$. Repeating the argument of the previous paragraph, we obtain the proof for $\Sigma^1_1$.

It remains to show that $\simeq_e$ and $\simeq$ are also complete as equivalence relations. As follows from Proposition 4 in \cite{ffn12}, for any $\Sigma^0_3$ equivalence relation $A$ on $\om$ there is a computable sequence $\{L_k\}$ of computable partial orders on $\om$ such that $kAl$ iff $L_k\simeq_eL_l$ which proves the $\Sigma^0_3$ completeness for every $\nu\in\{\alpha,\beta,\gamma,\delta,\varepsilon\}$. By Theorem 5 in \cite{ffh12}, for any $\Sigma^1_1$ equivalence relation $A$ on $\om$ there is a computable sequence $\{L_k\}$ of computable linear orders on $\om$ such that $kAl$ iff $L_k\simeq L_l$. This proves the $\Sigma^1_1$ completeness for every $\nu\in\{\alpha,\beta,\gamma,\delta,\varepsilon\}$.
 
2. By Proposition \ref{numb_compare}, we can use  $\iota$ instead of $\pi$. Denoting the relation $V_{t(n)}$ in Proposition \ref{numb_rel} by $\prec_n$, we obtain: $\iota(m)\simeq_e\iota(n)$ iff $\I{\prec_m}\simeq_e \I{\prec_n}$ iff
 \begin{eqnarray*}
 \exists k,l({\lceilx} V_k{\rceilx}:\I{\prec_m}\to \I{\prec_n}\wedge{\lceilx} V_l{\rceilx}:\I{\prec_n}\to \I{\prec_m}\wedge\\{\lceilx} V_l{\rceilx}\circ{\lceilx} V_k{\rceilx}=id_{\I{\prec_m}}\wedge{\lceilx} V_k{\rceilx}\circ{\lceilx} V_l{\rceilx}=id_{\I{\prec_n}}), 
 \end{eqnarray*}
 hence it suffices to check that the relation ${\lceilx} V_l{\rceilx}\circ{\lceilx} V_k{\rceilx}=id_{\I{\prec_m}}$ is $\Pi^1_1$. Since it is equivalent to $\forall I\in \I{\prec_m}({\lceilx} V_j{\rceilx}({\lceilx} V_i{\rceilx}(I))=I)$, this follows from the definition of ${\lceilx} R{\rceilx}(I)$.

The second assertion is a straightforward relativization of the first one. Indeed, $\iota(m)\simeq\iota(n)$ iff $\I{\prec_m}\simeq \I{\prec_n}$ iff
  \begin{eqnarray*}
  \exists R,S\subseteq\mathbb{N}^2({\lceilx} R{\rceilx}:\I{\prec_m}\to \I{\prec_n}\wedge{\lceilx} S{\rceilx}:\I{\prec_n}\to \I{\prec_m}\wedge\\{\lceilx} S{\rceilx}\circ{\lceilx} R{\rceilx}=id_{\I{\prec_m}}\wedge{\lceilx} R{\rceilx}\circ{\lceilx} S{\rceilx}=id_{\I{\prec_n}}), 
 \end{eqnarray*}
 hence the relation is $\Sigma^1_2$.
 \end{proof}

We do not currently know whether the estimates in item 2 of the above theorem are precise. From the effective Stone duality developed in \cite{HTMN,hks} it follows that the homeomorphism relation between computable compact Polish spaces is $\Sigma^1_1$-complete, as it was noticed in a recent communication of the third author with Alexander Melnikov (see Corollary 4.28 in \cite{dm}). But for computable Polish spaces the question remains open.

As a corollary of Theorem \ref{indexhom} and Proposition \ref{numb_compare}, we obtain upper bounds for $\iota$-index sets of some natural classes of spaces. 

\begin{corollary}\label{indexdom}
Let $\nu\in\{\mu,\alpha,\beta,\gamma,\delta,\varepsilon\}$. Then $\{n\mid\exists m(\iota(n)\simeq_e\nu(m))\}$ is $\Pi^1_1$ and $\{n\mid\exists m(\iota(n)\simeq\nu(m))\}$ is $\Sigma^1_2$.
 \end{corollary}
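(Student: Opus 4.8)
The plan is to reduce every instance to the homeomorphism relation on $\iota$ itself, whose complexity is already pinned down in Theorem \ref{indexhom}(2), and then to absorb the extra number quantifier and the auxiliary oracles into the arithmetical matrix of the appropriate normal form.

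First I would invoke Proposition \ref{numb_compare} to observe that for each $\nu\in\{\mu,\alpha,\beta,\gamma,\delta,\varepsilon\}$ there is an oracle $h\in\{\emptyset,\emptyset''\}$ and an $h$-computable function $g$ (depending on $\nu$) with $\nu(m)\simeq_e\iota(g(m))$ for all $m$. Indeed $\mu\leq_e\pi\equiv_e\iota$ and $\beta\leq_e\alpha\leq_e\iota$ yield a computable $g$, while $\gamma\leq^{\emptyset''}_e\alpha\leq_e\iota$, $\varepsilon\leq^{\emptyset''}_e\delta\leq^{\emptyset''}_e\iota$ yield an $\emptyset''$-computable $g$. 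Since $\simeq_e$ is an equivalence relation, transitivity gives $\iota(n)\simeq_e\nu(m)\iff\iota(n)\simeq_e\iota(g(m))$; and since a computable homeomorphism is in particular a homeomorphism, the same factorization holds for $\simeq$, namely $\iota(n)\simeq\nu(m)\iff\iota(n)\simeq\iota(g(m))$. Hence
\[\{n\mid\exists m\,(\iota(n)\simeq_e\nu(m))\}=\{n\mid\exists m\,(\iota(n)\simeq_e\iota(g(m)))\},\]
and similarly for $\simeq$.

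Next I would rewrite $\iota(n)\simeq_e\iota(g(m))$ as $\exists y\,(y=g(m)\wedge\iota(n)\simeq_e\iota(y))$. The predicate ``$y=g(m)$'' is arithmetical (the graph of an $\emptyset''$-computable function is $\Delta^0_3$), hence $\Delta^1_1$, while ``$\iota(n)\simeq_e\iota(y)$'' is $\Pi^1_1$ by Theorem \ref{indexhom}(2). As $\Pi^1_1$ is closed under conjunction with arithmetical predicates and under number quantifiers, the two leading existential number quantifiers $\exists m\,\exists y$ keep the whole set $\Pi^1_1$, giving the first bound. The identical computation with ``$\iota(n)\simeq\iota(y)$'', which is $\Sigma^1_2$ by Theorem \ref{indexhom}(2), together with closure of $\Sigma^1_2$ under number quantification, yields the $\Sigma^1_2$ bound for the $\simeq$-index set.

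The two absorptions are the only steps that require care, and I expect this quantifier bookkeeping -- rather than any topological or computability-theoretic difficulty -- to be the main thing to get right. For the oracle: because $\emptyset''$ is arithmetical, the relation $y=g(m)$ lands inside the arithmetical matrix of the $\Pi^1_1$ (resp.\ $\Sigma^1_2$) normal form, so that $\Pi^1_1(\emptyset'')=\Pi^1_1$ and $\Sigma^1_2(\emptyset'')=\Sigma^1_2$ and no inflation occurs. For the number quantifier, the step most worth checking is closure of $\Pi^1_1$ under $\exists^0$: this follows by noting that the negation $\forall m\,\exists f\,\neg(\dots)$ is $\Sigma^1_1$, since the witnessing functions for the various $m$ bundle into a single function (so $\Sigma^1_1$ absorbs the leading $\forall^0$), whence the original predicate is $\Pi^1_1$.
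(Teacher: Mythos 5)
Your proposal is correct and matches the paper's intended argument: the corollary is stated there without a separate proof precisely because it follows, as you do it, by combining Proposition \ref{numb_compare} (factoring each $\nu(m)$ through $\iota(g(m))$ for a computable or $\emptyset''$-computable $g$) with the $\Pi^1_1$ and $\Sigma^1_2$ bounds of Theorem \ref{indexhom}(2), then absorbing the number quantifiers and the arithmetical oracle. Your closing checks (closure of $\Pi^1_1$ under $\exists^0$ via bundling witnesses, and $\Pi^1_1(\emptyset'')=\Pi^1_1$, $\Sigma^1_2(\emptyset'')=\Sigma^1_2$) are exactly the bookkeeping the paper leaves implicit.
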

 
In particular, the problem of deciding whether a given effective quasi-Polish space is effectively homeomorphic to a metrizable space (a c.e.~domain, c.e.~algebraic domain, etc.)~is $\Pi^1_1$.
For the homeomorphism problem, it is $\Sigma^1_2$.
In the next section we show that the estimate for metrizable spaces can be improved.

\section{Complexity of separation axioms}\label{complete}

Here we discuss some classes of spaces related to separation axioms. Let $\mathcal{T}_1,\mathcal{T}_2,\mathcal{R},\mathcal{M}$ be the classes of $T_1$-, $T_2$-, regular, and metrisable spaces, respectively. Let $\{D_n\}$ be the standard numbering of finite subsets of $\omega$, then the sets $\check{D}_n=\{A\subseteq\omega\mid D_n\subseteq A\}$ form the standard basis of the Scott topology on $P\omega$. 

\begin{proposition}\label{index}
The $\pi$-index set of any of the classes $\mathcal{T}_1,$ $\mathcal{T}_2,$ $\mathcal{R},$ $\mathcal{M}$ is $\Pi^1_1$.
 \end{proposition}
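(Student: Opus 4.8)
The plan is to characterise each separation axiom for the space $X=\pi(n)$ in terms of its canonical countable basis, so that the only type-$1$ quantifiers that survive range over \emph{points} of $X$, while the choice of separating neighbourhoods is quantified by natural-number indices. Recall that $\pi(n)$ is a $\Pi^0_2$-subspace of $P\omega$, that its specialization order $\leq_X$ is the restriction of $\subseteq$, and that the sets $B_m=\check D_m\cap X$ $(m\in\omega)$ form a basis. The one arithmetic fact I would isolate first is that ``$\check D_m\cap X=\emptyset$'' is $\Pi^1_1$ uniformly in $n,m$: it unfolds to $\forall z\in P\omega\,(D_m\subseteq z\Rightarrow z\notin X)$, and since $z\in X$ is $\Pi^0_2$ this is a universal point-quantifier over an arithmetic matrix. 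Dually, non-emptiness of $\check D_m\cap X$ is $\Sigma^1_1$; this is precisely where the generality of $\pi$ (the spaces need not be overt) costs us, and it explains why the bounds are analytic rather than arithmetic.

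For $\mathcal{T}_1$ I would use that a $T_0$-space is $T_1$ iff its specialization order is equality, i.e. $\forall x,y\in X\,(x\subseteq y\Rightarrow x=y)$; the matrix is arithmetic and the two leading point-quantifiers give $\Pi^1_1$. For $\mathcal{T}_2$ I would use the second-countable characterisation that $X$ is Hausdorff iff any two distinct points have \emph{disjoint basic} neighbourhoods, i.e. $\forall x,y\in X\,(x\neq y\Rightarrow\exists i,j\,[\,D_i\subseteq x\,\&\,D_j\subseteq y\,\&\,\check D_{i\cup j}\cap X=\emptyset\,])$, writing $\check D_{i\cup j}$ for the basic open $\{A\mid D_i\cup D_j\subseteq A\}=\check D_i\cap\check D_j$, so that $B_i\cap B_j=\check D_{i\cup j}\cap X$. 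The bracketed disjointness clause is $\Pi^1_1$ by the fact above, number quantifiers preserve $\Pi^1_1$, and the outer point-quantifiers keep us in $\Pi^1_1$. For $\mathcal{R}$ I would use the basis form of regularity: $X$ is regular iff for every $x$ and every basic $B_i\ni x$ there is a basic $B_j$ with $x\in B_j$ and $\overline{B_j}\subseteq B_i$; crucially I would express $\overline{B_j}\subseteq B_i$ in the contrapositive ``positive'' form $\forall z\in X\,(D_i\not\subseteq z\Rightarrow\exists k\,[\,D_k\subseteq z\,\&\,\check D_{k\cup j}\cap X=\emptyset\,])$, so that the only analytic ingredient is again an emptiness statement. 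Prefixing the point- and index-quantifiers leaves this $\Pi^1_1$ as well.

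Finally, for $\mathcal{M}$ the conceptual step is Urysohn's metrization theorem: since every $\pi(n)$ is second-countable and $T_0$, it is metrizable iff it is $T_3$, and for a $T_0$-space regularity already yields $T_1$, so $X$ is metrizable iff $X\in\mathcal{R}\cap\mathcal{T}_1$. As the intersection of two $\Pi^1_1$ index sets is $\Pi^1_1$, this completes the proof. I expect the main obstacle to be the bookkeeping in the regularity case: naively expanding $z\in\overline{B_j}$ through ``every basic neighbourhood of $z$ meets $B_j$'' inserts a $\Sigma^1_1$ non-emptiness clause inside a universal point-quantifier, which threatens a $\Sigma^1_2$ blow-up. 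The remedy is exactly the passage to the contrapositive above, which replaces non-emptiness by the $\Pi^1_1$ emptiness predicate; verifying that each axiom can be put into this ``emptiness-only'' shape is the delicate part, and everything else is routine quantifier counting.
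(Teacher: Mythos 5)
Your proposal is correct and follows essentially the same route as the paper: the $T_2$ clause is identical, metrizability is handled by the same Urysohn reduction $\mathcal{M}=\mathcal{T}_1\cap\mathcal{R}$, and your ``emptiness-only'' contrapositive for regularity is just the negation normal form of the paper's formula, which leaves the $\Sigma^1_1$ closure-membership predicate $y\in Cl(\check{D}_j\cap\pi(m))$ in \emph{negative} position under a universal point quantifier---so the $\Sigma^1_2$ blow-up you worry about never arises and the ``naive'' expansion works directly. The only (harmless) variation is your $T_1$ clause via triviality of the specialization order $\subseteq$ in place of the paper's direct basic-neighbourhood separation formula.
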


\begin{proof} By the definition of a $T_1$-space, $\pi(m)\in\mathcal{T}_1$ iff
 $\forall x,y\in\pi(m)(x\neq y\to\exists n (x\in\check{D}_n\not\ni y)).$
 Since $\pi(m)\in\Pi^0_2(P\omega)$, we get $\pi^{-1}(\mathcal{T}_1)\in\Pi^1_1$.

By the definition of a $T_2$-space, $\pi(m)\in\mathcal{T}_2$ iff
 $$\forall x,y\in\pi(m)(x\neq y\to\exists i,j( x\in\check{D}_i\wedge y\in\check{D}_j\wedge\check{D_i}\cap\check{D}_j\cap\pi(m)=\emptyset)).$$
  Since $\check{D}_i\cap\check{D}_j\cap\pi(m)=\emptyset$ iff $\forall z\in\pi(m)(z\not\in\check{D}_i\vee z\not\in\check{D}_j)$, we have $\pi^{-1}(\mathcal{T}_2)\in\Pi^1_1$.
  
Recall that $X$ is regular iff for every $x\in X$ and every basic neighborhood $U$ of $x$ there is a basic neighborhood $V$ of $x$ such that the closure $Cl(V)$ of $V$ in $X$ is contained in $U$. For $X=\pi(m)$ this reads: $\pi(m)\in\mathcal{R}$ iff
 $\forall x\in\pi(m)\forall i(x\in\check{D}_i\to\exists j (x\in\check{D}_j\wedge Cl(\check{D}_j\cap\pi(m))\subseteq \check{D}_i)).$ Thus, it suffices to check that the relation $\forall y(y\in Cl(\check{D}_j\cap\pi(m))\to y\in\check{D}_i)$ is $\Pi^1_1$, and for this it suffices to check that the relation $y\in Cl(\check{D}_j\cap\pi(m))$ is $\Sigma^1_1$. The relation is equivalent to
 $\forall k(y\in \check{D}_k\cap\pi(m)\to\exists z\in \check{D}_j\cap\pi(m)(z\in\check{D}_k)),$ hence it is indeed $\Sigma^1_1$.

By the Urysohn metrisation theorem we have $\mathcal{M}=\mathcal{T}_1\cap\mathcal{R}$, hence the  estimate $\Pi^1_1$ for $\pi^{-1}(\mathcal{M})$ follows from the previous  ones. Note that the upper bound $\Sigma^1_2$ of $\pi^{-1}(\mathcal{M})$ in Corollary \ref{indexdom} (without using the Urysohn  theorem) is much worse. 
 \end{proof}

Next we show that the upper bounds of Proposition \ref{index} are optimal. Our proofs below demonstrate that the ideal characterisations  provide  useful tools for such kind of results. Recall that the following implications hold for cb$_0$-spaces:
\[\mbox{metrizable}\iff\mbox{regular}\implies\mbox{Hausdorff}\implies T_1.\]

For the equivalence of metrizability and regularity, as mentioned in \cite[Page 12]{CiT}, every regular $T_0$ space is Hausdorff:
For two distinct points $x,y$, by $T_0$-ness, there exists an open set $U$ containing either $x$ or $y$, but not the other.
Assuming that $U$ contains $x$, the complement of $U$ is closed, so from regularity they are separated, which in particular separates $x$ and $y$.
Thus, it follows from the Urysohn metrization theorem that regular cb$_0$ space is metrizable.

We start with the following $\Pi^1_1$-completeness result with respect to the numbering $\iota$ (i.e., the numbering of all effective quasi-Polish spaces induced from the standard numbering of c.e.~transitive relations), where recall $\iota\equiv_e\pi$ from Proposition \ref{numb_compare}.

\begin{theorem}\label{prop:Pi11completeness-metrizable}
Let $F\subseteq\om$ be a $\Pi^1_1$ set.
Then, there exists a computable function which, given $p\in\om$, returns an $\iota$-index of a c.e. EQP-space $X$ such that
\[
\begin{cases}
\mbox{$X$ is metrizable} & \mbox{if }p\in F,\\
\mbox{$X$ is not $T_1$} & \mbox{if }p\not\in F.
\end{cases}
\]
\end{theorem}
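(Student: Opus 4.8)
The plan is to reduce the $\Pi^1_1$-completeness of well-foundedness of computable trees to the stated dichotomy. Since $F$ is $\Pi^1_1$, I would fix a computable map $p\mapsto T_p$, where $T_p\subseteq\omega^{<\omega}$ is a computable tree uniformly in $p$, such that $p\in F$ if and only if $T_p$ is well-founded (has no infinite branch); this is the standard $\Pi^1_1$-completeness of well-foundedness. It then suffices to attach to each $T_p$ a c.e.\ transitive relation $\prec_p$ on $\omega$ whose ideal space is a single point when $T_p$ is well-founded, and fails to be $T_1$ when $T_p$ is ill-founded. The desired $\iota$-index is then obtained uniformly: because $\prec_p$ will already be (uniformly) transitive, one may take a c.e.\ index for $\prec_p$ directly and feed it through $t$.

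First I would describe the relation. Coding the nodes of $T_p$ together with one fresh symbol $\top$ as natural numbers, and leaving all remaining numbers $\prec_p$-isolated (so that they lie in no ideal), define $\sigma\prec_p\tau$ iff $\sigma$ is a \emph{proper} prefix of $\tau$; put $\sigma\prec_p\top$ for every node $\sigma$; and put $\top\prec_p\top$. A routine check shows $\prec_p$ is computable (hence c.e.) and transitive, uniformly in $p$, so $\I{\prec_p}$ is a c.e.\ EQP-space by the ideal characterisation, and $\{n\mid [n]_{\prec_p}\neq\emptyset\}$ is computable.

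The heart of the argument is the analysis of the ideals. Since the prefix part of $\prec_p$ is strict, directedness forces every ideal $I$ with $\top\notin I$ to contain, for each of its nodes, a proper extension lying in $I$; as incomparable nodes have no common extension, such an $I$ is exactly the set of nodes along a single infinite branch of $T_p$. On the other hand, because $\top$ is reflexive and lies $\prec_p$-above every node, the only ideal containing $\top$ is the whole set $J$ of all nodes together with $\top$, and this $J$ is indeed a nonempty directed lower set. Hence the points of $\I{\prec_p}$ are $J$ together with one point $\beta$ for each infinite branch of $T_p$, and since the specialization order $\leq_{\I{\prec_p}}$ is inclusion of ideals, we have $\beta\subsetneq J$, i.e.\ $\beta$ is strictly below $J$, for every such branch.

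From this the dichotomy is immediate. If $p\in F$ then $T_p$ is well-founded, there are no infinite branches, and $J$ is the unique ideal, so $\I{\prec_p}$ is a one-point space and hence metrizable. If $p\notin F$ then $T_p$ has an infinite branch $\beta$, giving two distinct points with $\beta\subsetneq J$; thus $\beta$ lies in the closure of $\{J\}$ and the space is not $T_1$ (indeed, a single all-zero branch yields exactly the Sierpi\'nski space). The main obstacle is the ideal analysis of the previous paragraph: one must verify carefully that the strict prefix order together with the single reflexive top forces precisely the branch-ideals and the top ideal $J$, with no spurious ideals arising either from unions of branches (blocked by directedness, since incomparable nodes lack a common upper bound) or from the isolated junk codes (which belong to no ideal for lack of a strict successor). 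Once this is established, the separation properties follow directly.
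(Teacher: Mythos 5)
Your proof is correct, but it takes a genuinely different construction from the paper's. Both arguments reduce from well-foundedness of computable trees (the standard $\Pi^1_1$-complete problem) and realize the dichotomy as the space of ideals of a uniformly computable transitive relation, verifying overtness at the end; the witnesses, however, differ substantially. The paper works with two interleaved copies of $\omega^{<\omega}$ (symbols $\sigma$ and $\underline{\sigma}$), with $\sigma\prec\underline{\sigma}$ and an extra merging clause $\underline{\sigma^-}\prec\sigma$ for $\sigma\notin T$; consequently every $x\in\omega^\omega$ yields a point $I_x$, each path $x\in[T]$ yields an additional point $J_x$ strictly below $I_x$, and in the well-founded case the space is homeomorphic to Baire space $\omega^\omega$. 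Your construction instead adjoins a single reflexive top symbol $\top$ above all tree nodes, so the only ideals are the full ideal $J$ and one branch-ideal per $x\in[T]$, each strictly below $J$; in the well-founded case the space degenerates to a single point. Your route is shorter and the ideal analysis easier, and your verification of the c.e./overtness requirement is sound (every node of $T_p$ lies in $J$, so the nonemptiness set is the computable set of codes of $T_p\cup\{\top\}$). What the paper's heavier construction buys is a fixed nontrivial homeomorphism type in the positive case --- Baire space --- so the hardness persists even within perfect, nonempty metrizable spaces (in particular it shows $\Pi^1_1$-hardness of recognizing spaces homeomorphic to $\omega^\omega$), and every basic open set is nonempty uniformly regardless of $T$; a one-point output suffices for the theorem as stated and also works as the $X_F$-component in the disjoint-union argument for Theorem \ref{prop:Pi11completeness-general}. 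One phrasing slip in your closing paragraph: incomparable nodes do have a common upper bound, namely $\top$, so the clause ``incomparable nodes lack a common upper bound'' should be qualified to ``lack a common upper bound in an ideal omitting $\top$''; your main analysis already handles this correctly, so it is cosmetic.
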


\begin{proof}
Recall that the set of indices of well-founded computable trees is $\Pi^1_1$ complete.
Hence, instead of a $\Pi^1_1$ set, we consider computable trees.
Let $T\subseteq\om^{<\om}$ be a computable tree.
Our space $X$ will be $\{I_x:x\in\om^\om\}\cup\{J_x:x\in [T]\}$ equipped with the specialization order $J_x\leq I_x$, where $[T]$ is the set of all infinite paths through $T$.
The discussion from here on is to write down this space $X$ as an ideal space.

For each $\sigma\in\om^{<\om}$, we prepare for a new symbol $\underline{\sigma}$.
Let $|\sigma|$ be the length of $\sigma$, and put $|\underline{\sigma}|=|\sigma|$.
If $\sigma$ is nonempty, i.e., $|\sigma|>0$, we denote by $\sigma^-$ the immediate predecessor of $\sigma$.
We define a computable binary relation $\prec$ on the set $|\mbox{$\prec$}|:=\{\sigma,\underline{\sigma}:\sigma\in\om^{<\om}\}$ as follows:
If $\sigma\in\om^\om$ is nonempty, enumerate $\sigma^-\prec\sigma$, $\underline{\sigma^-}\prec\underline{\sigma}$, and $\sigma\prec\underline{\sigma}$.
If $\sigma\not\in T$ then we also enumerate $\underline{\sigma^-}\prec\sigma$.
Then consider its transitive closure and define $X={I}(\prec)$.

Note that if $a\prec b$ then either $|a|<|b|$ or $a=\sigma$ and $b=\underline{\sigma}$ for some $\sigma\in\om^{<\om}$.
If $I$ is an ideal of $\prec$, then for any $a\in I$ one can use directedness of $I$ twice to obtain $b,c\in I$ such that $a\prec b\prec c$.
Then, by the property of $\prec$ mentioned above, we have $|a|<|c|$.
Therefore, any ideal contains arbitrarily long strings.
Moreover, as no pair of incomparable strings has an upper bound, in order for a set to be directed, all of its members must be comparable.
This means that for any ideal $I$ of $\prec$ there exists an infinite string $x\in\om^\om$ such that $I$ consists only of the initial segments of $x$ or those underlined in them.
In other words, $I$ is the $\prec$-downward closure of $\{\sigma:\sigma\subset x\}$ or $\{\underline{\sigma}:\sigma\subset x\}$, where we mean by $\sigma\subset x$ that $\sigma$ is an initial segment of $x$.

If $x$ is an infinite path through $T$, then the downward closure of $\{\sigma:\sigma\subset x\}$ is $J_x=\{\sigma:\sigma\subset x\}$, and the downward closure of $\{\underline{\sigma}:\sigma\subset x\}$ is $I_x=\{\sigma,\underline{\sigma}:\sigma\subset x\}$.
Both $I_x$ and $J_x$ are ideals, and since $J_x\subseteq I_x$, obviously $J_x\in[\tau]_\prec$ implies $I_x\in[\tau]_\prec$, so $J_x\leq_XI_x$, where recall that $[n]_\prec=\{I\in \I{\prec}:n\in I\}$ is a basic open set, and $\leq_X$ is the specialization order.
Hence, if $T$ is not well-founded, then $X$ is not $T_1$.
If $T$ is well-founded, then any $x\in\om^\om$ has an initial segment $\sigma\not\in T$, and for any such $\sigma$ we have $\sigma^-\prec\underline{\sigma^-}\prec\sigma\prec\underline{\sigma}\prec\dots$.
Hence, $\{\sigma:\sigma\subset x\}$ and $\{\underline{\sigma}:\sigma\subset x\}$ have the same downward closure $I_x=\{\sigma,\underline{\sigma}:\sigma\subset x\}$.
Therefore, any ideal is of the form $I_x$ for some $x\in\om^\om$.
Thus, we have $X=\{I_x:x\in\om^\om\}$, which is homeomorphic to Baire space $\om^\om$.
This is because, as $|\mbox{$\prec$}|=\{\sigma,\underline{\sigma}:\sigma\in\om^{<\om}\}$ is the underlying set of the binary relation $\prec$, the set $\{[\sigma]_\prec,[\underline{\sigma}]_\prec:\sigma\in\om^{<\om}\}$ yields the topology on $X$ by definition, and the above argument shows $X\cap[\sigma]_{\prec}=X\cap[\underline{\sigma}]_\prec=\{I_x:\sigma\subset x\}$.
In particular, $X$ is metrizable.
 
For overtness, given $\sigma\in\om^{<\om},$ if $x$ extends $\sigma$ then $I_x=\{\sigma,\underline{\sigma}:\sigma\subset x\}$ is an ideal of $\prec$ as seen above, and contains both $\sigma$ and $\underline{\sigma}$; hence $I_x\in[\sigma]_\prec$ and $I_x\in[\underline{\sigma}]_\prec$.
This means that $X\cap[\tau]_\prec\not=\emptyset$ for any $\tau\in|\mbox{$\prec$}|$.
In particular, $X$ is overt.
\end{proof}

Theorem \ref{prop:Pi11completeness-metrizable} shows that, for any $i\in\{1,2,3\}$, the $\iota$-index set of all  c.e. EQP $T_i$-spaces is $\Pi^1_1$-complete, where a second countable $T_0$ space is $T_3$ if and only if it is metrizable.
This result can be further extended as follows.

\begin{theorem}\label{prop:Pi11completeness-general}
Let $M\subseteq H\subseteq F\subseteq\om$ be $\Pi^1_1$ sets.
Then, there exists a computable function which, given $p\in\om$, returns an $\iota$-index of a c.e. EQP-space $X$ such that
\[
\begin{cases}
\mbox{$X$ is metrizable} & \mbox{if }p\in M,\\
\mbox{$X$ is Hausdorff, but not metrizable} & \mbox{if }p\in H\setminus M,\\
\mbox{$X$ is $T_1$, but not Hausdorff} & \mbox{if }p\in F\setminus H,\\
\mbox{$X$ is not $T_1$} & \mbox{if }p\not\in F.
\end{cases}
\]
\end{theorem}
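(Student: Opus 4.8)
The plan is to reduce to computable trees and then combine three separation-axiom ``gadgets'' by a product. By the $\Pi^1_1$-completeness of well-foundedness, I fix computable functions producing from $p$ three computable trees $T_F,T_H,T_M\subseteq\om^{<\om}$ with $p\in F\iff T_F$ is well-founded, and likewise for $H$ and $M$; since $M\subseteq H\subseteq F$, the well-foundedness of these trees is inherited in the direction we need. For each tree I build a c.e.\ transitive relation (hence, via the function $t$ of Proposition \ref{numb_rel}, an $\iota$-index) whose ideal space is an overt EQP-space, and I set $X=X_F\times X_H\times X_M$. By Proposition \ref{numb_compare} the product is represented by a computable operation on $\iota$-indices, and a product of overt spaces is overt, so an $\iota$-index of $X$ is obtained computably from $p$. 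The behaviour of $X$ is then governed by the elementary facts that, for nonempty cb$_0$-spaces, a (countable) product is $T_1$, Hausdorff, or regular if and only if every factor is, and that a countable product of metrizable spaces is metrizable.

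Gadget $X_F$ is exactly the space of Theorem \ref{prop:Pi11completeness-metrizable} applied to $T_F$: metrizable when $T_F$ is well-founded and not $T_1$ otherwise. Gadget $X_H$ is designed to be $T_1$ always and Hausdorff exactly when $T_H$ is well-founded. The idea is to realise, along each infinite path through $T_H$, a \emph{doubled point} as in the line with a doubled origin: to the chain presentation of Baire space used in Theorem \ref{prop:Pi11completeness-metrizable} I adjoin, for each node $\sigma$, symbols $d_\sigma$ generating a second cofinal filter whose basic neighbourhoods are the \emph{deleted} neighbourhoods of the corresponding path-ideal. These symbols are wired so that (i)~the twin ideal $I'_x$ and the ordinary ideal $I_x$ are incomparable, keeping the specialization order discrete and hence the space $T_1$, and (ii)~every basic neighbourhood of $I'_x$ meets every basic neighbourhood of $I_x$, so that $I_x,I'_x$ are topologically inseparable and the space fails to be Hausdorff; while, as soon as a path leaves $T_H$, the $d$-symbols collapse the twin onto the ordinary point, exactly as $J_x$ collapses onto $I_x$ in Theorem \ref{prop:Pi11completeness-metrizable}. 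Thus for well-founded $T_H$ the space is again (homeomorphic to) Baire space and metrizable, and for ill-founded $T_H$ it is $T_1$ but not Hausdorff.

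Gadget $X_M$ is the crux: it must be always Hausdorff but regular (equivalently metrizable) exactly when $T_M$ is well-founded. Here I attach, along each infinite path $x$ through $T_M$, a failure of regularity of the ``$K$-topology'' type: a point $p_x$ together with a closed set $C_x$ that accumulates at $p_x$ and cannot be separated from it by disjoint open sets, arranged so that any two distinct points still have disjoint neighbourhoods so as to preserve Hausdorffness. Concretely, one supplements the chain presentation with auxiliary symbols coding, for each node, a sequence that is declared closed and is forced into the closure of every neighbourhood of $p_x$, so that $p_x$ and $C_x$ witness non-regularity; these auxiliary symbols are again set to collapse once the path exits $T_M$, so that for well-founded $T_M$ the space is metrizable. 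The main obstacle is precisely the verification that this presentation is Hausdorff yet non-regular in the ill-founded case and metrizable in the well-founded case, since one must now control the \emph{closures} of basic open sets in the ideal space; this is more delicate than controlling the specialization order (which sufficed in Theorem \ref{prop:Pi11completeness-metrizable}) or mere topological inseparability (as for $X_H$).

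Finally I verify the four cases using the product facts. The space $X$ fails $T_1$ iff $X_F$ does, i.e.\ iff $p\notin F$. Assuming $p\in F$, so that $X_F$ is metrizable, $X$ fails Hausdorffness iff some factor does; since $X_M$ is always Hausdorff, this happens iff $X_H$ does, i.e.\ iff $p\notin H$. Assuming $p\in H$, so that $X_F$ and $X_H$ are both metrizable, $X$ fails regularity iff $X_M$ does, i.e.\ iff $p\notin M$. Together with $M\subseteq H\subseteq F$ this yields exactly the four stated alternatives, and overtness of $X$ holds throughout since each factor is overt and products of overt spaces are overt.
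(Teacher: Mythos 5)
Your overall architecture coincides with the paper's: reduce the three $\Pi^1_1$ sets to well-foundedness of computable trees, build one gadget per separation axiom ($X_F$ for $T_1$, $X_H$ for Hausdorffness, $X_M$ for metrizability), and combine them by a computable operation on $\iota$-indices. The only structural difference is that you combine by product where the paper uses the disjoint union; both operations are represented computably by Proposition~\ref{numb_compare}, and your product case analysis is sound, since for nonempty cb$_0$-spaces each factor embeds into the product and $T_1$, Hausdorffness and regularity are hereditary. The gadget $X_F$ is literally Theorem~\ref{prop:Pi11completeness-metrizable}, exactly as in the paper, and your sketch of $X_H$ (a topologically inseparable but specialization-incomparable twin ideal along each infinite path, collapsing onto the ordinary point once the path exits $T_H$) is in substance the paper's telophase construction of Lemma~\ref{prop:T2non-metrizable}.

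There is, however, a genuine gap: the gadgets carrying the new content are asserted rather than constructed, and you say so yourself. For $X_M$ you only list the desired features (always Hausdorff; a point $p_x$ and a closed set $C_x$ witnessing non-regularity along each infinite path; collapse outside $T_M$) and then explicitly defer ``the main obstacle,'' namely verifying that some c.e.\ ideal presentation realizes this behaviour. That verification is precisely the heart of the theorem relative to Theorem~\ref{prop:Pi11completeness-metrizable}. The paper's Lemma~\ref{prop:T3non-metrizable} carries it out with a concrete double-origin presentation over five families of symbols $(n,\sigma)$, $(n,\underline{\sigma})$, $(n,\sigma^\pm)$, $[n,\sigma]$, $[n,\underline{\sigma}]$: it classifies all ideals of the resulting relation, shows that every closed neighbourhood of $J_x^+$ (resp.\ $J_x^-$) contains $I_{m,x}$ for all sufficiently large $m$ (whence failure of separation by closed neighbourhoods, hence non-regularity), checks Hausdorffness in all cases, and in the well-founded case exhibits an explicit embedding into the Polish space $(\om+1)\times\om^{\leq\om}$. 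None of this is routine: controlling \emph{closures} of the basic opens $[\tau]_\prec$ in an ideal space is exactly the delicate point, and the paper warns that one cannot simply transplant textbook counterexamples (the classical double-origin space of Steen--Seebach is not quasi-Polish). Your alternative ``declare a sequence closed and add it to the topology'' idea could in principle be routed through the effective topology-extension machinery of Corollary~\ref{cor:extend_countable_closed} (this is essentially the anonymous referee's alternative via $\om^\om\times[0,1]^2$ mentioned in the paper's closing remark), but you neither invoke that machinery nor supply a presentation, so the $H\setminus M$ case of the theorem remains unproved in your write-up; the same criticism applies, more mildly, to the unverified claims (i) and (ii) for $X_H$.
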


This means that every tuple $(M,H,F)$ of $\Pi^1_1$-sets such that $M\subseteq H\subseteq F$ uniformly $m$-reduces to $( \iota^{-1}(\mathcal{M}),\iota^{-1}(\mathcal{T}_2),\iota^{-1}(\mathcal{T}_1))$. 
Let us decompose the proof of Theorem \ref{prop:Pi11completeness-general} into a few lemmas.

\begin{lemma}\label{prop:T2non-metrizable}
Let $H\subseteq\om$ be a $\Pi^1_1$ set.
Then, there exists a computable function which, given $p\in\om$, returns an $\iota$-index of a c.e. EQP-space $X$ such that
\[
\begin{cases}
\mbox{$X$ is metrizable} & \mbox{if }p\in H,\\
\mbox{$X$ is $T_1$, but not Hausdorff} & \mbox{if }p\not\in H.
\end{cases}
\]
\end{lemma}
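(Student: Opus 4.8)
The plan is to reuse the ideal construction of Theorem \ref{prop:Pi11completeness-metrizable}, but to \emph{decouple} the plain and underlined towers so that an infinite branch produces two incomparable yet topologically inseparable ideals instead of a comparable pair. Since the set of indices of well-founded computable trees is $\Pi^1_1$-complete, it suffices to produce, computably and uniformly in a computable tree $T\subseteq\om^{<\om}$, a c.e.\ transitive relation $\prec$ such that $\I{\prec}$ is metrizable when $T$ is well-founded and $T_1$ but not Hausdorff when $T$ is ill-founded. I would first replace $T$ by a relabelled computable tree in which every node has an immediate successor outside $T$ (e.g.\ shift every coordinate up by one so that the digit $0$ never occurs on $T$); this preserves well-foundedness and computability and will guarantee that enough ``collapsed'' points exist.

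As in the earlier proof I would take two symbols $\sigma$ and $\underline\sigma$ for each $\sigma\in\om^{<\om}$ and let $\prec$ be the transitive closure of: the tower clauses $\sigma\prec\sigma^\frown i$ and $\underline\sigma\prec\underline{\sigma^\frown i}$ for all $i$; and the \emph{symmetric} cross clauses $\sigma\prec\underline{\sigma^\frown i}$ and $\underline\sigma\prec\sigma^\frown i$ whenever $\sigma^\frown i\notin T$. The crucial departure from Theorem \ref{prop:Pi11completeness-metrizable} is that I would include \emph{no} clause $\sigma\prec\underline\sigma$, so that along a tree branch the two towers never communicate. The relation is computable uniformly in $T$, hence $\I{\prec}$ is EQP by Theorem 11 in \cite{br1}, and it is overt because every symbol sits on an infinite chain: for any $y=\sigma^\frown 0^\om$ the branch eventually leaves $T$, and its ideal contains both $\sigma$ and $\underline\sigma$.

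The description of the ideals then proceeds as before: irreflexivity forces every ideal to be non-principal, and directedness forces its elements to lie along a single branch $y\in\om^\om$. On a branch that eventually leaves $T$ the cross clauses make each tower dominate the other past the exit point, so the two towers merge into one ideal $P_y=\{\tau,\underline\tau\mid\tau\subset y\}$; on a branch $x\in[T]$ no cross clause fires, so the pure towers $J_x=\{\tau\mid\tau\subset x\}$ and $I_x=\{\underline\tau\mid\tau\subset x\}$ are two distinct ideals. When $T$ is well-founded we have $[T]=\emptyset$, only the merged ideals remain, $y\mapsto P_y$ is a homeomorphism onto Baire space, and $\I{\prec}$ is metrizable. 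When $T$ is ill-founded, $T_1$ reduces to checking that no two distinct ideals are $\subseteq$-comparable, which holds because $J_x$ is purely plain, $I_x$ purely underlined (the transitive closure adds no plain-to-underlined link along a tree branch, since the only $\prec$-predecessors of $\underline\tau$ with $\tau\subset x\in[T]$ are lower underlined symbols), and distinct branches carry disjoint data. Failure of Hausdorffness is witnessed, for any $x\in[T]$, by $J_x$ and $I_x$: given basic neighbourhoods $[\sigma]_\prec\ni J_x$ and $[\underline\tau]_\prec\ni I_x$ with $\sigma,\tau\subset x$, I would pick an off-tree successor of a long enough initial segment of $x$, obtaining an off-tree branch $y$ that agrees with $x$ beyond both $\sigma$ and $\tau$; then $P_y$ contains $\sigma$ and $\underline\tau$, so the two neighbourhoods meet, and hence so do all neighbourhoods.

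The main difficulty I anticipate is that the two conclusions pull against each other. Any naive device for making $J_x$ and $I_x$ inseparable --- giving them a common lower part, or a tower lying above both --- immediately produces a strict inclusion of ideals and so destroys $T_1$; this is exactly the comparable pair $J_x\subsetneq I_x$ arising in Theorem \ref{prop:Pi11completeness-metrizable}. The resolution, and the crux of the argument, is to route the inseparability \emph{only} through the off-tree collapsed points $P_y$, never through any coupling of the two towers; this is precisely why omitting the clause $\sigma\prec\underline\sigma$ and padding $T$ so that off-tree successors are available at every node are both indispensable (without the padding a maximally branching ill-founded $T$ would yield two disjoint copies of Baire space, which is Hausdorff). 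The remaining points are routine: confirming that the transitive closure creates no spurious plain-to-underlined relation on a tree branch and that the only ideals are the $J_x$, $I_x$, and $P_y$.
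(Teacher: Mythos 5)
Your construction is correct, but it realizes the non-Hausdorff pair by a mechanism genuinely different from the paper's. All the verifications you flag as routine do go through: since every clause of your relation strictly extends the underlying string, ideals are unbounded and confined to a single branch; along an on-tree branch no path of clauses can cross between the plain and underlined towers (any path from $\underline{\rho}$ to a plain $\mu$ must use a cross clause at an off-tree prefix of $\mu$), so $J_x$ and $I_x$ are the only ideals there and are incomparable, giving $T_1$; along an exiting branch the symmetric cross clauses past the exit node make every ideal equal to the merged $P_y$; and your padding trick makes the $P_y$ dense enough that every pair of basic neighbourhoods $[\sigma]_\prec\ni J_x$, $[\underline{\tau}]_\prec\ni I_x$ meets in some $P_y$ with $y$ taking an off-tree child of a long prefix of $x$. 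The paper proceeds differently: it builds a telophase-type space with a third family of symbols $\underline{\sigma}$ made \emph{reflexive} ($\underline{\sigma}\prec\underline{\sigma}$), so that each finite string contributes an isolated point $I_\sigma$ lying above both towers $[\sigma,\infty]$ and $[\sigma,\infty^\star]$; these isolated points exist at every node regardless of $T$ and serve as the common witnesses in any two basic neighbourhoods of the inseparable pair $(I_x,I_x^\star)$. Consequently the paper needs no preprocessing of the tree, and its well-founded case yields the space $\om^{\leq\om}$ (finite strings isolated, infinite strings as limits), whereas your version has no isolated points at all and yields exactly Baire space $\om^\om$ when $T$ is well-founded. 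What your route buys is a homogeneous, perfect example and the observation that inseparability can be routed entirely through collapsed off-tree limit points; what it costs is the padding step, whose indispensability you correctly identify (for $T=\om^{<\om}$ your unpadded relation would give two disjoint copies of Baire space, which is Hausdorff) — a failure mode the paper's isolated-point mechanism avoids automatically.
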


\begin{proof}
First, one specific example of a second countable $T_1$ topology which is not Hausdorff is called a telophase topology \cite[II.73]{CiT}.
Here, our construction is closer to the one in \cite{KNP}, which adds an inseparable pair of points at infinity to $\om$ than the one in \cite[II.73]{CiT}, which adds a new point $1^\star$ to $[0,1]$ where $(1,1^\star)$ forms an inseparable pair.
In our construction, a tree $T\subseteq\om^{<\om}$ is first given.
For $x\in\om^\om$, if $x$ is an infinite path through $T$ then we add an inseparable pair $(I_x,I_x^\star)$ of points at infinity to the discrete space $\om^{<\om}$.
If $x$ is not an infinite path through $T$ then we add a single point $J_x$ at infinity to $\om^{<\om}$.

Formally, given a tree $T\subseteq\om^{<\om}$, we consider the following specific presentation $\prec$ of a telophase topology:
For each $\sigma\in\om^{<\om}$, we prepare for symbols $\underline{\sigma}$, $[\sigma,\infty]$, and $[\sigma,\infty^\star]$.
We define a computable binary relation $\prec$ on the set $|\mbox{$\prec$}|:=\{\underline{\sigma},[\sigma,\infty],[\sigma,\infty^\star]:\sigma\in\om^{<\om}\}$.
If $\sigma$ is nonempty, we denote by $\sigma^-$ the immediate predecessor of $\sigma$, and enumerate $[\sigma^-,o]\prec[\sigma,o]\prec\underline{\sigma}\prec\underline{\sigma}$ for each $o\in\{\infty,\infty^\star\}$.
If $\sigma\not\in T$, we also enumerate $[\sigma^-,\infty^\star]\prec[\sigma,\infty]\prec[\sigma,\infty^\star]$.
Then consider its transitive closure and define $X=\I{\prec}$.

First, since $\underline{\sigma}\prec\underline{\sigma}$, the $\prec$-downward closure of $\{\underline{\sigma}\}$ forms an ideal.
This is $I_\sigma=\{\underline{\sigma}\}\cup\{[\tau,\infty],[\tau,\infty^\star]:\tau\subseteq\sigma\}$, where we mean by $\tau\subseteq\sigma$ that $\tau$ is an initial segment of $\sigma$.
Note that the subspace $Y=\{I_\sigma:\sigma\in\om^{<\om}\}$ of $X$ is discrete since $Y\cap [\underline{\sigma}]_\prec=\{I_\sigma\}$.
For any $a\not\in I_\sigma$, $a$ and $\underline{\sigma}$ have no common upper bound, so $I_\sigma$ is the unique ideal containing $\underline{\sigma}$.
If an ideal $I$ does not contain $\underline{\sigma}$ for any $\sigma\in\om^{<\om}$, then as in the proof of Theorem \ref{prop:Pi11completeness-metrizable}, one can see that $I$ contains $[\sigma,\infty]$ or $[\sigma,\infty^\ast]$ for an arbitrarily long string $\sigma$.
Moreover, as no pair of incomparable strings has an upper bound, in order for a set to be directed, all of its members must be comparable.
This means that for any such ideal $I$ of $\prec$ there exists an infinite string $x\in\om^\om$ such that $I$ consists only of $[\sigma,\infty]$ or $[\sigma,\infty^\star]$ for initial segments $\sigma$ of $x$.
In other words, such an $I$ is the $\prec$-downward closure of $\{[\sigma,\infty]:\sigma\subset x\}$ or $\{[\sigma,\infty^\star]:\sigma\subset x\}$.

If $x$ is an infinite path through $T$, then both $I_x=\{[\sigma,\infty]:\sigma\subset x\}$ and $I_x^\star=\{[\sigma,\infty^\star]:\sigma\subset x\}$ are downward closed.
Hence, any ideal is of the form $I_\sigma$, $I_x$ or $I_x^\star$.
We claim that the latter two ideals as points cannot be separated by disjoint open sets.
This is because any basic open sets containing $I_x$ and $I_x^\star$ are of the form $[[\sigma,\infty]]_\prec$ for some $\sigma\subset x$ and $[[\tau,\infty^\star]]_\prec$ for some $\tau\subset x$ respectively.
However, $[[\sigma,\infty]]_\prec$ and $[[\tau,\infty^\star]]_\prec$ always have an intersection $I_\rho$, where $\rho$ is a common extension of $\sigma$ and $\tau$.
Hence, $X$ is not Hausdorff.
If $x$ is not an infinite path through $T$, then as in the proof of Theorem \ref{prop:Pi11completeness-general}, one can see that both $\{[\sigma,\infty]:\sigma\subset x\}$ and $\{[\sigma,\infty^\star]:\sigma\subset x\}$ have the same downward closure $J_x=\{[\sigma,\infty],[\sigma,\infty^\star]:\sigma\subset x\}$.
In any case, no two ideals are comparable by $\subseteq$, so $X$ is $T_1$.
Hence, if $T$ is ill-founded, then $X$ is $T_1$, but not Hausdorff.

If $T$ is well-founded, then as seen above, any ideal is of the form $I_\sigma$ or $J_x$; that is, $X=\{I_\sigma:\sigma\in\om^{<\om}\}\cup\{J_x:x\in\om^\om\}$.
We claim that $X$ is homeomorphic to the Polish space $\om^{\leq\om}:=\om^{<\om}\cup\om^\om$ whose topology is generated from $\{\sigma:\sigma\in\om^{<\om}\}$ and $[\sigma]=\{x\in\om^{\leq\om}:x\mbox{ extends }\sigma\}$.
This is because, as $|\mbox{$\prec$}|=\{\underline{\sigma},[\sigma,\infty],[\sigma,\infty^\star]:\sigma\in\om^{<\om}\}$ is the underlying set of the binary relation $\prec$, the set $\{[\underline{\sigma}]_\prec,[[\sigma,\infty]]_\prec,[[\sigma,\infty^\star]]_\prec:\sigma\in\om^{<\om}\}$ yields the topology on $X$ by definition, and the above argument shows $X\cap[[\sigma,\infty]]_\prec=X\cap[[\sigma,\infty^\star]]_\prec=\{I_\tau:\tau\subseteq\sigma\}\cup\{J_x:\sigma\subset x\}$.
Hence, the union of the map $\sigma\mapsto I_\sigma$ and the map $x\mapsto J_x$ gives a homeomorphism between $\om^{\leq\om}$ and $X$.
In particular, $X$ is metrizable.

For overtness, given $\sigma\in\om^{<\om}$, we have $I_\sigma\in[\sigma]_\prec$, and if $x$ extends $\sigma$ then $I_x,J_x\in[[\sigma,\infty]]_\prec$ and $I_x^\star,J_x\in[[\sigma,\infty^\star]]_\prec$.
This means that $X\cap[\tau]_\prec\not=\emptyset$ for any $\tau\in|\mbox{$\prec$}|$.
In particular, $X$ is overt.
\end{proof}

\begin{lemma}\label{prop:T3non-metrizable}
Let $M\subseteq\om$ be a $\Pi^1_1$ set.
Then, there exists a computable function which, given $p\in\om$, returns an $\iota$-index of a c.e. EQP-space $X$ such that
\[
\begin{cases}
\mbox{$X$ is metrizable} & \mbox{if }p\in M,\\
\mbox{$X$ is Hausdorff, but not metrizable} & \mbox{if }p\not\in M.
\end{cases}
\]
\end{lemma}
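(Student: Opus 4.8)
The plan is to follow exactly the template of Theorem~\ref{prop:Pi11completeness-metrizable} and Lemma~\ref{prop:T2non-metrizable}. As in the proof of Theorem~\ref{prop:Pi11completeness-metrizable}, since the set of indices of well-founded computable trees is $\Pi^1_1$-complete, it suffices to produce, uniformly and computably from a computable tree $T\subseteq\om^{<\om}$, a c.e.\ transitive relation $\prec$ whose ideal space $X=\I{\prec}$ is metrizable when $T$ is well-founded and Hausdorff but not metrizable when $T$ is ill-founded. Invoking the equivalence of metrizability and regularity for cb$_0$-spaces recalled above, the whole task reduces to arranging that $X$ is \emph{always} Hausdorff, and that regularity of $X$ fails precisely when $[T]\neq\emptyset$.

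The guiding local model is the classical second countable Hausdorff non-regular space (an Arens-type sequential fan): one distinguished point $\infty$, a closed set $C=\{c_n:n\in\om\}$ with $\infty\notin C$, and an array of isolated points $\{d_{n,m}\}$ arranged so that each column $\{d_{n,m}:m\in\om\}$ converges to $c_n$, while the basic neighborhoods of $\infty$ are the sets $\{\infty\}\cup\{d_{n,m}:n\geq N\}$ consisting of all sufficiently far columns. Then $C$ is closed with $\infty\notin\overline{C}$, any two distinct points are separated by disjoint opens (so the space is Hausdorff), yet every neighborhood of $\infty$ meets every neighborhood of $C$ along the far-column tails, so $\infty$ and $C$ cannot be separated, witnessing non-regularity. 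I would encode this pattern \emph{at infinity} along $T$: the origin $\infty_x$ is realized as a limit ideal of an infinite branch $x\in[T]$ (so that it exists exactly when $T$ is ill-founded), whereas the column-limits $c_n$ and the isolated grid points $d_{n,m}$ are carried by the finite part of the presentation and survive regardless of $T$. Concretely, as in the two preceding proofs, I would introduce for each $\sigma\in\om^{<\om}$ a fixed finite set of new symbols (an ``underlined'' spine symbol marking the origin at infinity, together with symbols for the column-limits and the isolated grid), enumerate the generating relations realizing the above convergences, and close under transitivity to obtain a computable $\prec$.

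The verification would then proceed in the familiar three steps. First, following the ideal analysis of Theorem~\ref{prop:Pi11completeness-metrizable}, I would classify the ideals of $\prec$, showing that each is determined by an infinite string, that a string tracing an infinite branch of $T$ produces an origin $\infty_x$, and that a string leaving $T$ forces a collapse producing no origin. Second, in the ill-founded case I would exhibit a concrete origin $\infty_x$ together with the closed copy $C$ of the column-limits, check Hausdorffness by separating each pair of distinct ideals using the basic open sets $[n]_\prec$, and then run the key intersection computation verifying that every basic neighborhood $[\,\cdot\,]_\prec$ of $\infty_x$ meets every open set containing $C$; by metrizable $\iff$ regular this yields non-metrizability while Hausdorffness is retained. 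Third, in the well-founded case I would argue that all origins disappear and the surviving space is homeomorphic to a fixed countable Polish space (a disjoint union of convergent sequences, analogous to the identifications $X\simeq\om^\om$ and $X\simeq\om^{\leq\om}$ in the two preceding results), hence metrizable. Finally, overtness is checked exactly as before by noting $X\cap[\tau]_\prec\neq\emptyset$ for every symbol $\tau$, so that $X$ is a c.e.\ EQP-space, and uniformity in an index for $T$ gives the required computable function.

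The main obstacle is the design of $\prec$ itself. One must set up the convergence relations so that, along an infinite branch, the origin $\infty_x$ becomes inseparable from the column-limit set $C$ (failure of regularity) while every pair of points remains Hausdorff-separated, and simultaneously so that when $T$ is well-founded these same relations force all would-be origins to collapse into the purely metrizable fan. Getting this closure behavior exactly right is delicate precisely because regularity is governed by the $\Sigma^1_1$ closure condition analyzed in the proof of Proposition~\ref{index}, and this is where most of the technical work will lie.
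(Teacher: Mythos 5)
Your route is genuinely different from the paper's: the paper builds a \emph{double origin} space (following \cite{KNP}) in which non-metrizability is witnessed by a \emph{pair of points} $J_x^+,J_x^-$ that cannot be separated by closed neighborhoods, whereas you propose an Arens-type fan in which every pair of points stays separated (even by closed neighborhoods) and regularity fails only at the level of a point versus a closed set. That is in principle viable --- it is essentially the alternative route sketched by the referee in the Remark after Theorem~\ref{prop:Pi11completeness-general}, and it would yield a $T_{2.5}$ non-regular witness where the paper's gives a $T_2$ non-$T_{2.5}$ one, so the two constructions are complementary rather than interchangeable. However, as submitted your proposal has a genuine gap: the relation $\prec$ is never defined. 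You explicitly defer ``the design of $\prec$ itself'' as the main obstacle, but that design, together with the classification of all ideals of $\prec$, \emph{is} the proof of this lemma; the remaining steps you describe (reduction to well-founded trees, overtness, the homeomorphism with a fixed Polish space in the well-founded case) are boilerplate shared with Theorem~\ref{prop:Pi11completeness-metrizable} and Lemma~\ref{prop:T2non-metrizable} and carry none of the lemma-specific content.

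Concretely, the missing construction must negotiate three points that your sketch does not address. First, \emph{ghost ideals}: the $\prec$-downward closure of any finite fragment of a spine or of a column must fail to be directed, so the spine elements $a_\sigma$ (whose infinite chains realize the origins $\infty_x$) and the column elements $b_{\tau,k}$ (whose chains realize the limits $c_\tau$) must be irreflexive, with reflexive top elements reserved for the isolated grid points --- exactly the device used in the paper's two preceding constructions; otherwise spurious ideals appear below $\infty_x$ and destroy Hausdorffness outright. Second, a tension between Hausdorffness and inseparability: if the basic neighborhood $[a_\sigma]_\prec$ of $\infty_x$ contains the full columns attached to \emph{all} nodes extending $\sigma$ (note every node extends the empty string), then $\infty_x$ cannot be Hausdorff-separated from any $c_\tau$ with $\tau\subset x$; one must arrange $[a_\sigma]_\prec$ to pick up only the grid of columns at nodes \emph{properly} extending $\sigma$, and separate $\infty_x$ from $c_\tau$ by choosing $\sigma\subset x$ with $|\sigma|\geq|\tau|$, while the intersection computation against open sets containing $C=\{c_\tau:\tau\in\om^{<\om}\}$ still goes through. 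Third, the well-founded ``collapse'': in Theorem~\ref{prop:Pi11completeness-metrizable} and Lemma~\ref{prop:T2non-metrizable} each disappearing point has a designated partner it merges into, but your $\infty_x$ has no natural partner in the fan; the clean fix is not merging but non-existence --- attach the spine elements $a_\sigma$ only for $\sigma\in T$, so that the origin ideals are exactly the downward closures of $\{a_\sigma:\sigma\subset x\}$ for $x\in[T]$ and simply fail directedness when $T$ is well-founded, while the grid and all $c_\tau$ persist for every $\tau$ (which also keeps every basic open nonempty, i.e., the space overt). With these choices your three-step verification can be carried out, but since none of it appears in the proposal, it currently falls short of a proof.
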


\begin{proof}
First, one specific example of a second countable Hausdorff topology which is not metrizable is called a double origin topology \cite[II.74]{CiT}.
It is like a Euclidean plane with two origins, which cannot be separated by closed neighborhoods (that cause non-metrizability).
Here, our construction is closer to the one in \cite{KNP}, which yields a quasi-Polish space, while the example in \cite[II.74]{CiT} is not quasi-Polish.
In our construction, a tree $T\subseteq\om^{<\om}$ is first given.
The base plane of our space is the discrete space $\om\times(\om^{<\om}\sqcup\om^{<\om})$.
For each $x\in\om^\om$, the points $I_{n,x}=(n,x)$ and $(\infty,x)$ may be added.
Here, if $x$ is an infinite path through $T$ then two points $J_x^+$ and $J_x^-$ corresponding to $(\infty,x)$ are added, and these cannot be separated by closed neighborhoods.
Indeed, the intersection of any two closed neighborhoods containing $J_x^+$ and $J_x^-$ respectively contains $I_{n,x}$ for an arbitrary large $n$.
If $x$ is not an infinite path through $T$ then the plane is folded in half with the abscissa $\{I_{n,x}:n\in\om\}$ as the fold line, and then $J_x^+$ is identified with $J_x^-$.

Formally, given a tree $T\subseteq\om^{<\om}$, we consider the following specific presentation $\prec$ of a telophase topology:
For each $n\in\om$ and $\sigma\in\om^{<\om}$, we prepare for symbols $(n,\sigma)$, $(n,\underline{\sigma})$, $(n,\sigma^{\pm})$, $[n,\sigma]$ and $[n,\underline{\sigma}]$.
We define a computable binary relation $\prec$ on the set 
\[|\mbox{$\prec$}|:=\{(n,\sigma), (n,\underline{\sigma}), (n,\sigma^{\pm}), [n,\sigma],[n,\underline{\sigma}]:n\in\om\mbox{ and }\sigma\in\om^{<\om}\}.\]

For any $m<n$, put the following:
\begin{gather*}
[m,\sigma]\prec[n,\sigma]\prec (n,\sigma)\prec (n,\sigma),\\
[m,\underline{\sigma}]\prec[n,\underline{\sigma}]\prec (n,\underline{\sigma})\prec (n,\underline{\sigma}).
\end{gather*}

If $\tau$ is a proper initial segment of $\sigma$, put the following:
\[(n,\tau^\pm)\prec(n,\sigma^\pm),\quad (n,\sigma^\pm)\prec (n,\sigma),\quad (n,\sigma^\pm)\prec (n,\underline{\sigma}).\]

If $m<n$ and $\tau$ is a proper initial segment of $\sigma$, put the following:
\[[m,\tau]\prec[n,\sigma],\quad [m,\underline{\tau}]\prec[n,\underline{\sigma}].\]

If $\sigma\not\in T$, $m<n$, and $\tau$ is a proper initial segment of $\sigma$, then we also put the following:
\[[m,\tau]\prec[n,\underline{\sigma}]\prec[n,\sigma],\quad (n,\sigma)\prec(n,\underline{\sigma})\prec(n,\sigma).\]

Then consider its transitive closure and define $X=\I{\prec}$.
For a directed set $D$, let ${\downarrow}D$ denote the $\prec$-downward closure of $D$.
As in the previous proofs, one can see that any ideal of $\prec$ is one of the following forms:
\begin{gather*}
I_{n,\sigma}={\downarrow}\{(n,\sigma)\},\quad I_{n,\underline{\sigma}}={\downarrow}\{(n,\underline{\sigma})\},\quad I_{n,x}={\downarrow}\{(n,\sigma^\pm):\sigma\subset x\},\\
J_x^+={\downarrow}\{[n,\sigma]:n\in\om\mbox{ and }\sigma\subset x\},\quad J_x^-={\downarrow}\{[n,\underline{\sigma}]:n\in\om\mbox{ and }\sigma\subset x\}
\end{gather*}

If $x$ is not an infinite path through $T$, then it is easy to see that $I_{n,\sigma}=I_{n,\underline{\sigma}}$ and $J_x^+=J_x^-$.
We claim that $X$ is Hausdorff.
First, to see $I_{n,\sigma}$ and $I_{m,\tau}$ are separated for $n\not=m$ or $\sigma\not=\tau$ where $\sigma,\tau\in\om^{<\om}$, note that $(n,\sigma)$ and $(m,\tau)$ have no common upper bound, so $[(n,\sigma)]_\prec$ and $[(m,\tau)]_\prec$ have no intersection.
Thus, the points $I_{n,\sigma}$ and $I_{m,\tau}$ are separated by $[(n,\sigma)]_\prec$ and $[(m,\tau)]_{\prec}$.
Similarly, one can see that $I_{n,\underline{\sigma}}$ and $I_{n,\underline{\tau}}$ are separated.
If $\sigma\in T$ then $I_{n,\sigma}$ and $I_{n,\underline{\sigma}}$ are separated, and if $\sigma\not\in T$ then $I_{n,\sigma}=I_{n,\underline{\sigma}}$.
If $m>n$ then $(n,\sigma^\pm)$ and $[m,\sigma]$ have no common upper bound, so $[(n,\sigma^\pm)]_\prec$ and $[(m,\sigma)]_\prec$ have no intersection.
Thus, $I_{n,x}$ and $J_x^+$ are separated by them.
In a similar manner, one can easily separate pairs $(I_{n,x},J_y^-)$, $(I_{n,\sigma},J_x^+)$, $(I_{n,\sigma},I_{n,x})$, etc.
If $x$ is an infinite path through $T$, then $[n,\sigma]$ and $[n,\underline{\sigma}]$ have no common upper bound, so $[[n,\sigma]]_\prec$ and $[[n,\underline{\sigma}]]_\prec$ have no intersection.
Thus, $J_x^+$ and $J_x^-$ are separated by them.
If $x$ is not an infinite path through $T$, then $J_x^+=J_x^-$.
This concludes that $X$ is Hausdorff.

If $x$ is an infinite path through $T$, we claim that $J_x^+$ and $J_x^-$ cannot be separated by closed neighborhoods.
Indeed, we show that any closed neighborhood of $J_x^+$ or $J_x^-$ contains $I_{n,x}$ for some $n\in\om$.
To see this, consider an open neighborhood $[[n,\sigma]]_{\prec}$ of $J_x^+$.
Then, for any $m\geq n$, $[(m,\sigma^\pm)]_\prec$ is an open neighborhood of $I_{m,x}$.
Since $m\geq n$, $(m,\sigma)$ is a common upper bound of $[n,\sigma]$ and $(m,\sigma^\pm)$, so we have $I_{m,\sigma}\in[(m,\sigma)]_\prec\subseteq[[n,\sigma]]_{\prec}\cap[(m,\sigma^\pm)]_\prec$.
Hence, any open neighborhood of $I_{m,x}$ intersects with $[[n,\sigma]]_{\prec}$, and this means that the closure of $[[n,\sigma]]_\prec$ contains $I_{m,x}$ for any $m\geq n$.
Similarly, the closure of $[[n,\underline{\sigma}]]_\prec$ contains $I_{m,x}$ for any $m\geq n$.
This verifies the claim.
In particular, if $T$ is ill-founded, then such an $x$ exists, so $X$ is not metrizable.

If $T$ is well-founded, then $I_{n,\sigma}=I_{n,\underline{\sigma}}$ and $J_x^+=J_x^-$.
Hence,
\[X=\{I_{n,\sigma}:n\in\om\mbox{ and }\sigma\in\om^{<\om}\}\cup\{I_{n,x}:n\in\om\mbox{ and }x\in\om^\om\}\cup\{J_x^+:x\in\om^\om\}.\]

We claim that $X$ is embedded into the Polish space $Z=(\om+1)\times\om^{\leq\om}$, where $\om+1$ is the one point compactification of $\om$, and $\om^{\leq\om}$ endowed with the Polish topology as in the proof of Lemma \ref{prop:T2non-metrizable}.
Indeed, the union of the maps $\langle n,\sigma\rangle\mapsto I_{n,\sigma}$, $\langle n,x\rangle\mapsto I_{n,x}$, and $\langle\om,x\rangle\mapsto J_x^+$ gives a homeomorphism between $(\om\times\om^{\leq\om})\cup(\{\om\}\times\om^\om)\subseteq Z$ and $X$.
This is because we have $[(n,\sigma)]_\prec=\{I_{n,\sigma}\}$, and if $T$ has no infinite path extending $\sigma$, then we have $[(n,\sigma^\pm)]_\prec=\{I_{n,\tau}:\sigma\subseteq\tau\}\cup\{I_{n,x}:\sigma\subset x\}$, and $[[n,\sigma]]_\prec=\{I_{m,\tau}:n\leq m,\sigma\subseteq\tau\}\cup\{I_{n,x}:n\leq m,\sigma\subset x\}\cup\{J_x^+:\sigma\subset x\}$.
This means that the basic open set $[(n,\sigma)]_\prec$ in $X$ corresponds to the basic open set $\{\langle n,\sigma\rangle\}$ in $Z$, the basic open set $[(n,\sigma^\pm)]_\prec$ in $X$ corresponds to the basic open set $\{n\}\times\{x\in\om^{\leq\om}:\sigma\subset x\}$ in $Z$, and the basic open set $[[n,\sigma]]_\prec$ in $X$ corresponds to the basic open set $\{m\in\om+1:m\geq n\}\times\{x\in\om^{\leq\om}:\sigma\subset x\}$ in $Z$.
Hence, if $T$ is well-founded, then $X$ is metrizable.
Overtness of $X$ is obvious as before.
\end{proof}

\begin{proof}[Proof of Theorem \ref{prop:Pi11completeness-general}]
Let $M\subseteq H\subseteq F\subseteq\om$ be $\Pi^1_1$ sets.
Let $X_F$, $X_H$ and $X_M$ be c.e. EQP-spaces obtained by Theorem \ref{prop:Pi11completeness-metrizable}, Lemma \ref{prop:T2non-metrizable} and Lemma \ref{prop:T3non-metrizable}.
Then, consider the disjoint union of these spaces, i.e., $X=(\{0\}\times X_F)\cup(\{1\}\times X_H)\cup(\{2\}\times X_M)$.
If $p\in M$ then all of these spaces are metrizable, so $X$ is metrizable.
If $p\in H\setminus M$, then $X_F$ and $X_H$ are metrizable, and $X_M$ is Hausdorff, but not metrizable.
Therefore, $X$ is Hausdorff, but not metrizable.
If $p\in F\setminus H$, then $X_F$ is metrizable, $X_H$ is $T_1$, but not Hausdorff, and $X_M$ is Hausdorff.
Therefore, $X$ is $T_1$, but not Hausdorff.
If $p\not\in F$, then $X_F$ is not $T_1$.
Thus, $X$ is not $T_1$.
\end{proof}

\begin{remark}
Our proof of Lemma \ref{prop:T3non-metrizable} actually gives a metrizable space if $p\in M$, and a Hausdorff but not $T_{2.5}$ space if $p\not\in M$.
On the other hand, an anonymous referee suggested an alternative proof of Lemma \ref{prop:T3non-metrizable}, which gives a metrizable space if $p\in M$, and a $T_{2.5}$ (indeed, submetrizable) but not metrizable space if $p\not\in M$:
It is the product space $\om^\om\times [0,1]^2$ with $C=[T]\times \{(x,y):x=0\text{ and }y>0\}$ added to the topology as a closed set.
Taken together, Theorem \ref{prop:Pi11completeness-general} is more complete:
Every tuple $(M,U,H,F)$ of $\Pi^1_1$-sets such that $M\subseteq U\subseteq H\subseteq F$ uniformly $m$-reduces to $(\iota^{-1}(\mathcal{M}),\iota^{-1}(\mathcal{T}_{2.5}),\iota^{-1}(\mathcal{T}_2), \iota^{-1}(\mathcal{T}_1))$. 
\end{remark}



\section{Degree spectra of continuous domains}\label{spectra}

An interpolable relation $\prec$ differs from a preorder in that it does not satisfy $x\prec x$ in general; interpreted in terms of domain, an element $x$ satisfying $x\prec x$ corresponds to a compact element.
Since the degree-spectra of algebraic domains have been studied to some extent \cite{s20}, let us investigate the degree spectra of non-algebraic $\omega$-continuous domains.
We say that a binary relation $\prec$ is {\em irreflexive} if $x\prec x$ fails for any $x$.
We first show that, for any Turing degree ${\bf a}$, there exists an irreflexive interpolable relation whose degree spectrum is $\{{\bf x}:{\bf a}\leq{\bf x}'\}$.

\begin{theorem}\label{thm:degree-spectra-interpolable}
For any $A\subseteq\N$, there exists an irreflexive interpolable relation $\prec_A$ on $\N$ such that for any $X\subseteq\N$:
\[A\leq_TX'\iff\mbox{there exists an $X$-computable isomorphic copy of $\prec_A$}.\]
\end{theorem}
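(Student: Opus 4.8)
The plan is to read the condition $A\leq_T X'$ through the relativized limit lemma: $A\leq_T X'$ holds iff $A$ admits an $X$-computable $\{0,1\}$-valued approximation $(A_s)_{s\in\om}$ with $A(n)=\lim_s A_s(n)$, equivalently iff $A\in\Delta^0_2(X)$. Accordingly I would build $\prec_A$ as a disjoint union $\bigsqcup_{n}G_n$ of \emph{coding gadgets}, one per $n$, and code the bit $A(n)$ by the \emph{direction} of an infinite chain. Concretely, $G_n$ has a spine which is an infinite strict chain of order type $\om$ when $A(n)=1$ and of order type $\om^\ast$ (the reverse of $\om$) when $A(n)=0$; to identify the index $n$ robustly I attach to \emph{every} spine element a pendant chain of length exactly $n$ lying above it and incomparable to the rest of the spine. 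Each gadget is transitive and irreflexive, and below any point the predecessors form a chain, so each $G_n$ — hence the whole disjoint union — is interpolable. Thus $\prec_A$ is an irreflexive interpolable relation and $\I{\prec_A}$ is a (non-algebraic) $\om$-continuous domain by Proposition~\ref{idealdom}(1).

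For the forward implication I would show that $A$ is decodable from the jump of \emph{any} copy. Given a copy $C$ (with $\prec^C\leq_T C$), the comparability components are exactly the gadgets, and using $C'$ one locates the component whose common pendant length is $n$ and recovers $n$; the point of attaching pendants to every spine element is that the pendant length, being shared across the whole gadget, is invariant under the finite perturbations introduced below. The bit is then read off as follows: for a spine element $s$, the statements ``$s$ has finitely many spine-predecessors'' and ``$s$ has finitely many spine-successors'' are both $\Sigma^0_2(C)$, exactly one of which holds (according as the spine is $\om$- or $\om^\ast$-like), so each is $\Sigma^0_1(C')$ and they are complementary. Hence $A\in\Delta^0_2(C)$, and $A\leq_T C'\leq_T X'$ whenever $C\leq_T X$. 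This is precisely where the $\om$-versus-$\om^\ast$ coding earns its keep: a \emph{single} $\Sigma^0_2$ invariant would only yield $A\leq_T C''$, whereas the two complementary $\Sigma^0_2$ descriptions pin the complexity down to exactly one jump.

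For the backward implication, from an $X$-computable approximation $(A_s)$ I would run a monotone (enumeration-only) stage construction on domain $\N$: gadget $n$ is grown by, at each stage $s$, adding a new \emph{top} above the current spine of $G_n$ when $A_s(n)=1$ and a new \emph{bottom} below it when $A_s(n)=0$, always also installing the length-$n$ pendant on each new spine element. Since $\prec$ is only ever extended and every element's relations are fixed once and for all, the resulting relation is $X$-computable. Because $A_s(n)$ changes only finitely often, cofinitely many additions go to one end: if $A(n)=1$ only finitely many bottoms are added, so the spine has order type $k+\om\cong\om$, and symmetrically $\om^\ast+k\cong\om^\ast$ if $A(n)=0$. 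The robustness point is that $\om$ absorbs finite prefixes and $\om^\ast$ absorbs finite suffixes, and the pendants are uniform, so the gadget produced is \emph{isomorphic} to the canonical $G_n$ regardless of how many guesses were wrong; hence the construction outputs an $X$-computable copy of the fixed relation $\prec_A$.

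The main obstacle, and the reason the two previous paragraphs must be arranged with care, is landing the complexity at \emph{exactly} one jump while keeping the isomorphism type of $\prec_A$ well defined. The naive ``finite versus infinite chain'' codings are one-sided ($\Sigma^0_2$ or $\Pi^0_2$ but not both), which would force $A\leq_T X''$, and any coding by a retraction-free finite local feature is $\Delta^0_1$-rigid and cannot be repaired under a changing approximation. The $\om/\om^\ast$-direction coding resolves both difficulties at once — it is $\Delta^0_2$ (complementary $\Sigma^0_2$) on the decoding side and finite-error-absorbing on the construction side — but it still requires the bookkeeping I would need to check in full: that spine elements are uniformly distinguishable from pendant elements, that $n$ and the finite/infinite dichotomy are recovered uniformly by $C'$, and that interpolability is preserved at every stage. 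I expect these verifications to be routine but tedious, and that is where the real work of the proof lies.
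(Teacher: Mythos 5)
There is a fatal gap at the very first step: your gadgets are not interpolable, and in fact cannot be. For an irreflexive transitive relation, directedness of the initial segment $\{x \mid x\prec y\}$ applied to the pair $a=b=x$ produces some $z$ with $x\prec z\prec y$; hence every irreflexive interpolable relation is \emph{dense} (it has no covering pairs). This is exactly the observation the paper exploits in Proposition~\ref{prop:interpolable-pi-0-2-complete}, where it is noted that the strict order $<_\mathbb{Q}$ restricted to a finite set cannot be interpolable. Your spines of order type $\om$ or $\om^\ast$ and your pendant chains of length exactly $n$ are discrete chains: if $s_1\prec s_2$ are consecutive spine elements, the initial segment of $s_2$ contains $s_1$ but contains no strict upper bound of $s_1$, so directedness fails. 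The sentence ``below any point the predecessors form a chain, so each $G_n$ is interpolable'' is precisely where the argument breaks: a chain is directed under a \emph{reflexive} order (take its maximum), but under an irreflexive relation a finite chain segment has no element strictly above its own maximum. Consequently every feature your coding relies on --- the direction of a discrete spine, a pendant of length $n$ --- is a feature that cannot occur in any irreflexive interpolable relation; and densifying the chains destroys the coding, since a dense chain has no ``direction'' and the natural replacement invariants (presence of endpoints and the like) are one-sided $\Sigma^0_2$, which is exactly the $X''$ trap you flag yourself.

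The paper's proof avoids this by coding into \emph{widths} rather than chain types. The building blocks are the relations $\prec_\ell$ on $\ell\times\mathbb{Q}$ given by $(m,p)\prec_\ell(n,q)$ iff $m\le n$ and $p<q$, which are dense (interpolate using the density of $\mathbb{Q}$), and $\prec_A$ is the disjoint union of $\prec_{\langle n,A(n)\rangle}$ for all $n$ together with infinitely many copies of $\prec_\om$. Decoding lands at exactly one jump because the invariant ``there is a maximal compatible antichain of size $k$'' splits into a $\Sigma^0_1(X)$ part (being a compatible antichain) and a $\Pi^0_1(X)$ part (maximality), so $X'$ can list the realized sizes --- your correct instinct that one needs two complementary effective descriptions, not a single $\Sigma^0_2$ fact, is realized there in c.e./co-c.e.\ form. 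On the construction side, errors are absorbed not by a $k+\om\cong\om$ cancellation but by converting a wrongly grown component into a copy of $\prec_\om$ (stacking infinitely many $\mathbb{Q}$-layers on top of it), which is why the junk copies of $\prec_\om$ are built into $\prec_A$ from the outset. Your limit-lemma skeleton and jump-counting strategy are sound, but everything resting on discrete chains must be replaced by dense, width-based gadgets of this kind.
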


\begin{proof}
For $\ell\in\omega+1$, let $\prec_\ell$ be a binary relation on $\ell\times\mathbb{Q}$ defined as follows:
\[(m,p)\prec_\ell(n,q)\iff m\leq n\mbox{ and }p<q.\]

Clearly, $\prec_\ell$ is irreflexive and transitive.
We claim that $\prec_\ell$ is interpolable.
To see this, let $F=\{(m_i,p_i)\}_{i<j}$ be a finite subset of $\ell\times\mathbb{Q}$, and $(n,q)$ be an $\prec_\ell$-upper bound of $F$.
Then we have $\max_{i<j}m_i\leq n$ and $\max_{i<j}p_i<q$.
By density of $\mathbb{Q}$ we have $\max_{i<j}p_i<r<q$, and so $(n,r)<(n,q)$ is also an $\prec_\ell$-upper bound of $F$.

For any set $A\subseteq\N$ we will define a binary relation $\prec_A$ on $\om^2\times\mathbb{Q}$.
In the following we fix a bijective coding $\langle\cdot,\cdot\rangle\colon\om\times 2\to\omega$, say $\langle a,b\rangle=2a+b$.
Then we define a binary relation $\prec_A$ as the disjoint union of $\{\prec_{\langle n,A(n)\rangle}:n\in\N\}$ and countably many copies of $\prec_\omega$.
To be more precise, for each $a\in\om$, if $a$ is even, say $a=2a'$,  put $\ell(a)=\langle a',A(a')\rangle$; if $a$ is odd, put $\ell(a)=\om$.
Then $\prec_A$ is defined as follows:
\[
(a,m,p)\prec_A(b,n,q)\iff a=b\mbox{ and }(m,p)\prec_{\ell(a)}(n,q).
\]

Assume that there exists an $X$-computable relation $\prec$ on $\N$ which is isomorphic to $\prec_A$.
We say that $\{x_1,\dots,x_n\}$ is {\em compatible} if $\{x_1,\dots,x_n\}$ has a $\prec$-lower bound.
We claim that for any $a\in \N$, $A(a)=b$ if and only if there exists a maximal compatible antichain of size $\langle a,b\rangle$.
To see this, note that if $h$ is an isomorphism between $\prec$ and $\prec_A$, then compatibility of $\{x_1,\dots,x_n\}$ ensures that $\{h(x_i):i\leq n\}$ belongs to a single component; that is, $\{h(x_i):i\leq n\}\subseteq\{a\}\times \ell(a)\times\mathbb{Q}$ for some $a\in \N$.
So one may assume that $h(x_i)$ is of the form $(a,m_i,p_i)$.
If such $\{x_1,\dots,x_n\}$ is an antichain, then $i\not=j$ implies $m_i\not=m_j$, so we get $n\leq\ell(a)$.
If $n<\ell(a)$, we have $k<\ell(a)$ such that $k\not=m_i$ for any nonzero $i\leq n$.
Since $\{x_1,\dots,x_n\}$ is antichain, $i<j$ implies $q_j\leq q_i$, so one can choose $q\in\mathbb{Q}$ such that $\max_{m_j>k}q_j\leq q\leq\min_{m_i<k}q_i$, and put $x'=h^{-1}(a,k,q)$.
One can see that $\{x_1,\dots,x_n,x'\}$ is still an antichain.
Hence, if $\{x_1,\dots,x_n\}$ is a maximal compatible antichain then $n=\ell(a)$.
Conversely, for a fixed $p\in\mathbb{Q}$, since $\{(a,i,p):i<\ell(a)\}$ is a maximal compatible antichain, so is $\{h^{-1}(a,i,p):i<\ell(a)\}$.
This verifies the claim.

We next claim that there exists an $X'$-computable sequence of all finite maximal compatible antichains.
One can easily see that the condition of being a compatible antichain for a finite sequence is c.e.~relative to $X$.
Note that a compatible antichain $\langle x_1,\dots,x_n\rangle$ is maximal if and only if any $y$ in the same component as $x_i$ is comparable with $x_j$ (i.e., $y\prec x_j$ or $x_j\prec y$) for some $j\leq n$.
This is because $h(x_i)$ is of the form $(a,m_i,p_i)$, and $h(y)$ is of the form $(a,k,q)$ for some $k<\ell(a)$.
Note that $y$ being in the same component as $x_i$ is equivalent to $\{x_i,y\}$ being compatible, so this property is $X$-c.e.
Thus, maximality of a compatible antichain is co-c.e.~relative to $X$.
This verifies the claim.

Finally, for any $a\in\N$, using $X'$ search for a maximal compatible antichain whose size is of the form $\langle a,b\rangle$.
Then, we calculate $A(a)=b$.
Hence, $A\leq_TX'$.

Conversely, assume that $A\leq_TX'$.
Let $(A_s)_{s\in\om}$ be an $X$-computable approximation of $A$.
At stage $s$, we assume that we have already enumerate (a finite fragment of) a copy of $\prec_{\langle a,A_s(a)\rangle}$, and infinitely many copies of $\prec_\omega$.
At stage $s+1$, if $A_{s}(a)<A_{s+1}(a)$ for some $a<s$, the original copy of $\prec_{\langle a,A_s(a)\rangle}$ can be modified to a copy $\prec_{\langle a,A_{s+1}(a)\rangle}$ by enumerating the rationals at the top.
If $A_{s+1}(a)<A_{s}(a)$, the original copy of $\prec_{\langle a,A_s(a)\rangle}$ can be modified to a copy of $\prec_\omega$ by enumerating infinitely many copies of $\mathbb{Q}$ at the top.
Then, start making a new copy of $\prec_{\langle a,A_{s+1}(a)\rangle}$ using fresh elements.
In any case, we also start making a copy of $\prec_{\langle s,A_{s+1}(s)\rangle}$.
One can easily see that this construction gives an $X$-computable copy of $\prec_A$.
\end{proof}

Let us mention a few conclusions of Theorem \ref{thm:degree-spectra-interpolable}.
Recall that $X$ is {\em high} if $\emptyset''\leq_TX'$, and that $X$ is {\em low$_n$} if $X^{(n)}\leq_T\emptyset^{(n)}$.
The {\em degree spectrum} of a relation $\prec$ is the collection of all Turing degrees of isomorphic copies of $\prec$ on $\om$

\begin{corollary}
The class of high degrees is the degree spectrum of an irreflexive interpolable relation.
\end{corollary}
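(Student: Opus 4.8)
The plan is to apply Theorem~\ref{thm:degree-spectra-interpolable} directly with the instantiation $A=\emptyset''$. First I would recall the two relevant definitions just stated: the degree spectrum of a relation $\prec$ is the collection of Turing degrees of isomorphic copies of $\prec$ on $\omega$, and $X$ is \emph{high} exactly when $\emptyset''\leq_TX'$. By the theorem, for every $X\subseteq\N$ there exists an $X$-computable isomorphic copy of $\prec_{\emptyset''}$ if and only if $\emptyset''\leq_TX'$. Hence the degree spectrum of $\prec_{\emptyset''}$ is precisely $\{\deg(X):\emptyset''\leq_TX'\}$, which by the definition of highness is exactly the class of high degrees. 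Since the theorem furnishes $\prec_{\emptyset''}$ as an irreflexive interpolable relation, this is the required witness and the corollary follows.

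The essential observation driving the argument is simply that highness is the assertion $A\leq_TX'$ for $A$ of degree $\mathbf{0}''$, so it already has the exact syntactic shape produced by the theorem; the only point to verify is this matching of definitions. Consequently there is no real obstacle here, as all the genuine work (the construction of the relation $\prec_A$ and the verification of the equivalence $A\leq_TX'\iff$ ``there is an $X$-computable copy of $\prec_A$'') was carried out in the proof of Theorem~\ref{thm:degree-spectra-interpolable}; the corollary is an immediate specialization. I would also remark in passing that taking $A=\emptyset^{(n)}$ in place of $\emptyset''$ yields, by the same one-line specialization, analogous degree-spectrum characterizations for the corresponding jump classes, which may be worth stating if those classes are of interest.
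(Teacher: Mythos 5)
Your proposal is correct and coincides with the paper's own proof, which instantiates Theorem~\ref{thm:degree-spectra-interpolable} with $A=\emptyset''$ in exactly the same one-line way; your additional verification that highness matches the theorem's conclusion is just the definitional unpacking the paper leaves implicit.
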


\begin{proof}
Let $A$ in Theorem \ref{thm:degree-spectra-interpolable} be $\emptyset''$.
\end{proof}

\begin{corollary}
There exists a low$_2$ irreflexive interpolable relation which is not isomorphic to any computable relation.
\end{corollary}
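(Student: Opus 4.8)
The plan is to apply Theorem \ref{thm:degree-spectra-interpolable} to a well-chosen oracle. Recall that, by that theorem, the degree spectrum of $\prec_A$ is exactly $\{\deg(X) : A\leq_T X'\}$. In particular, taking $X=\emptyset$, the relation $\prec_A$ is isomorphic to a \emph{computable} relation if and only if $A\leq_T\emptyset'$. Thus to produce a relation that is low$_1$-free (not isomorphic to any computable relation) yet has a low$_2$ copy, it suffices to find a set $A$ with $A\not\leq_T\emptyset'$ whose spectrum nevertheless contains a low$_2$ degree. The correct source of such an $A$ is a \emph{jump}: I would take $A=X'$ for a suitable $X$.

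Concretely, first I would invoke the classical fact that the low hierarchy is proper at level $2$, i.e.\ there exists a set $X$ that is low$_2$ (so $X''\leq_T\emptyset''$) but not low$_1$ (so $X'>_T\emptyset'$); see, e.g., Soare. Set $A=X'$. Applying Theorem \ref{thm:degree-spectra-interpolable} to this $A$ yields an irreflexive interpolable relation $\prec_A$. Since trivially $A=X'\leq_T X'$, the ($\Leftarrow$) direction of the theorem provides an $X$-computable isomorphic copy $\mathcal{R}$ of $\prec_A$ on $\N$. Being isomorphic to $\prec_A$, the relation $\mathcal{R}$ is again irreflexive and interpolable. Moreover $\mathcal{R}\leq_T X$, and since the Turing jump is monotone this gives $\mathcal{R}''\leq_T X''\leq_T\emptyset''$, so $\mathcal{R}$ is low$_2$.

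It remains to see that $\mathcal{R}$ is not isomorphic to any computable relation. Because $X$ is not low$_1$, we have $A=X'\not\leq_T\emptyset'$, so by the ($\Rightarrow$) direction of Theorem \ref{thm:degree-spectra-interpolable} (with $X=\emptyset$) the relation $\prec_A$ has no computable isomorphic copy; as $\mathcal{R}\cong\prec_A$, the same holds for $\mathcal{R}$. Hence $\mathcal{R}$ is the desired low$_2$ irreflexive interpolable relation not isomorphic to any computable relation. I do not expect a genuine obstacle here: the only external ingredient is the properness of the low hierarchy at level $2$, which is classical, and the sole point requiring care is the jump bookkeeping showing that an $X$-computable copy inherits low$_2$-ness from $X$.
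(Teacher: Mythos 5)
Your proposal is correct and follows essentially the same route as the paper: apply Theorem \ref{thm:degree-spectra-interpolable} with $A=X'$ for a low$_2$ non-low set $X$, so that $X'\not\leq_T\emptyset'$ rules out a computable copy while the $X$-computable copy guaranteed by the theorem satisfies $\mathcal{R}''\leq_T X''\leq_T\emptyset''$. The only cosmetic difference is that the paper takes its witness $X$ to be $\Delta^0_2$ (a low$_2$ non-low set below $\emptyset'$), which your argument does not need.
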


\begin{proof}
If $A\not\leq_T\emptyset'$ then $\prec_A$ has no computable isomorphic copy by Theorem \ref{thm:degree-spectra-interpolable}.
Thus, if $X$ is not low, then $\prec_{X'}$ has no computable isomorphic copy, while it has an $X$-computable copy, again by Theorem \ref{thm:degree-spectra-interpolable}.
Now, consider a low$_2$ set $X\leq_T\emptyset'$ which is not low. 
\end{proof}

However, let us recall that effective quasi-Polish spaces are directly correspond to c.e.~transitive relations.
Therefore, when focusing on the topological aspect, it seems more natural to consider a c.e.~relation rather than a computable relation.

\begin{theorem}\label{thm:degree-spectra-interpolable2}
For any $A\subseteq\N$, there exists an irreflexive interpolable relation $\prec_A^\ast$ on $\N$ such that for any $X\subseteq\N$:
\[A\leq_TX''\iff\mbox{there exists an $X$-c.e.~isomorphic copy of $\prec_A^\ast$}.\]
\end{theorem}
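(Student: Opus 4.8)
The plan is to reuse the relation from Theorem~\ref{thm:degree-spectra-interpolable} and to route both directions through its relativization. Concretely, I would take $\prec_A^\ast$ to be the relation $\prec_A$ of Theorem~\ref{thm:degree-spectra-interpolable} — the disjoint union of the irreflexive interpolable blocks $\prec_\ell$ together with countably many junk copies of $\prec_\omega$ — possibly augmented with further copies of $\prec_\omega$ to serve as spare blocks. Irreflexivity and interpolability are inherited from the blocks exactly as before (initial segments stay inside a single block), and the purely structural coding lemma carries over verbatim: a size $k$ is realized by some finite maximal compatible antichain if and only if $k=\langle n,A(n)\rangle$ for some $n$, so the set of finite maximal-compatible-antichain sizes is exactly $\{\langle n,A(n)\rangle : n\in\N\}$.

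For the forward direction I would argue that an $X$-c.e. copy is automatically an $X'$-computable copy: its order relation is $\Sigma^0_1(X)\subseteq\Delta^0_1(X')$, and (after an $X'$-computable re-indexing of the domain, if the domain is only $X$-c.e.) the whole copy is $X'$-computable. Since Theorem~\ref{thm:degree-spectra-interpolable} is stated for an \emph{arbitrary} oracle and a fixed $\prec_A$, instantiating it with $X'$ in place of $X$ shows that the existence of an $X'$-computable copy of $\prec_A$ is equivalent to $A\leq_T(X')'=X''$; hence an $X$-c.e. copy forces $A\leq_T X''$. (Equivalently, one can decode directly: ``size $k$ is realized'' unwinds to a $\Sigma^0_3(X)$, i.e.\ $X''$-c.e., predicate, and uniqueness of the realized $\langle a,\cdot\rangle$ lets $X''$ compute $A$.)

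For the converse the same instantiation of Theorem~\ref{thm:degree-spectra-interpolable} gives, from $A\leq_T X''$, an $X'$-computable copy of $\prec_A$; the real task is to \emph{lower} this to an $X$-c.e. copy, and here I would build the copy directly. By the Shoenfield limit lemma, fix an $X$-computable double approximation $A(n)=\lim_s\lim_t C_{s,t}(n)$. I would then enumerate a disjoint union of blocks $\prec_\ell$, maintaining for each code $a$ a single \emph{live} block: whenever the current approximation to $A(a)$ rises I grow the live block monotonically (adding a level with its copy of $\mathbb{Q}$ at the top), and whenever it drops I \emph{neutralize} the live block by enumerating an infinite ascending tower on top of it — turning it into a harmless $\prec_\omega$ junk block — and start a fresh live block. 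Every move is an enumeration move, so the result is genuinely $X$-c.e.; and provided each code is restarted only finitely often, its final live block stabilizes at size $\langle a,A(a)\rangle$ while all abandoned blocks become infinite, so the realized finite sizes are exactly $\{\langle n,A(n)\rangle\}$ and the block decomposition is isomorphic to $\prec_A^\ast$.

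The hard part will be guaranteeing finitely many restarts per code. Under a double approximation the naive ``restart on decrease'' strategy can misfire infinitely often, since transient low values coming from inner rows whose $t$-limit has not yet settled get interleaved with correct high values from already-settled rows. I would resolve this with the standard two-level ($\emptyset''$-priority) organization: arrange guesses about which inner limits have settled on a tree of strategies, reset a code's block whenever a higher-priority (lower-$s$) guess is seen to change, and verify that along the true path — where the inner limits do settle and the outer limit converges — each code acts, and hence restarts, only finitely often. Granting this, the final live blocks realize precisely the sizes $\langle n,A(n)\rangle$, the abandoned blocks provide the required $\prec_\omega$ junk, and the blocks' inheritance of irreflexivity and interpolability keeps $\prec_A^\ast$ irreflexive and interpolable, completing the construction of the desired $X$-c.e. copy.
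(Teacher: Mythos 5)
Your forward direction is fine, and in fact cleaner than the paper's: an $X$-c.e.\ copy is $X'$-computable, and relativizing Theorem~\ref{thm:degree-spectra-interpolable} to $X'$ gives $A\leq_T X''$. The genuine gap is in the converse, and it is not a matter of routine $\emptyset''$-priority bookkeeping: your only moves are \emph{grow the live block} and \emph{junk-and-restart}, both irreversible enumerations, and with these moves the plan is unachievable. If the live block for code $a$ were $X$-computably identifiable and restarted only finitely often, then its size would be an $X$-computable convergent approximation to $\langle a,A(a)\rangle$, so the limit lemma would yield $A\leq_T X'$ --- impossible when $A$ is strictly $\Delta^0_3(X)$. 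This is why you are forced onto a tree, where identifiability of the true node is lost; but there the construction founders on the abandoned blocks. A strategy stranded strictly left of the true path is eventually never visited and never initialized, so its partial block is frozen as a finite component (isomorphic to no $\prec_\ell$ and no $\prec_\omega$), or, if background-completed, as a block of a possibly \emph{forbidden} size $\langle a',1-A(a')\rangle$. No $X$-computable trigger (visits, moves left, moves right) separates ``dead node whose block must grow to $\prec_\omega$'' from ``true-path node whose block must be held at a finite size forever'': the distinguishing fact is $\Pi^0_2(X)$, the true path is only $\Delta^0_3(X)$, and growth cannot be retracted. So ``finitely many restarts along the true path'' and ``all abandoned blocks become junk'' cannot be simultaneously arranged by the moves you allow; this is essentially a limitwise-monotonicity obstruction, and it pins your scheme at the $X'$ level rather than $X''$.

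The paper's proof works precisely because it discards both the restart mechanism and the original relation. It changes the target: with a three-part coding $\langle a,b,c\rangle$, the relation $\prec_A^\ast$ consists of infinitely many junk copies of $\prec_{\langle a,i,1\rangle}$ for each $i<2$ together with copies of $\prec_{\langle a,A(a),0\rangle}$, and the construction uses $\Sigma^0_3(X)$ (liminf-style) approximations to \emph{both} $A$ and its complement, building candidate components in two regions per $a$. The crucial new device is the \emph{lowering} trick: when the approximation drops, the top layer of a block is not junked but merged into a lower layer by a one-time role change, which only \emph{adds} pairs to the relation and moves each element at most once. Consequently every component stabilizes element-wise even if it is acted on infinitely often, and the infinitely-fluctuating outcome lands exactly on the legitimate component $\prec_{\langle a,i,0\rangle}$ --- the ``restart infinitely often'' outcome is converted into a definite isomorphism type instead of a failure. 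This makes the construction injury-free (no tree, no holding, no restarts), and it is exactly the idea your proposal is missing; without the merge device, or some substitute that lets a component's type be controlled in the limit under infinite activity, the junk-and-restart approach cannot reach $\Delta^0_3(X)$ coding.
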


\begin{proof}
In the following we fix a bijective coding $\langle\cdot,\cdot,\cdot\rangle\colon\om\times 2\times 2\to\omega$, say $\langle a,b,c\rangle=4a+2b+c$.
Let $\prec_A^\ast$ be the disjoint sum of infinitely many copies of $\prec_{\langle a,i,1\rangle}$ for each $i<2$, and $\prec_{\langle a,A(a),0\rangle}$.
Clearly, $\prec_A^\ast$ is irreflexive and interpolable.

Assume that there exists an $X$-c.e.~relation $\prec$ on $\N$ which is isomorphic to $\prec_A^\ast$.
In this case, for a finite sequence, compatibility is c.e., being an antichain is co-c.e., and maximality is $\Pi^0_2$, relative to $X$.
Hence there exists an $X''$-computable sequence of all finite maximal compatible antichains.
As in the proof of Theorem \ref{thm:degree-spectra-interpolable}, one can see that for any $a\in \N$, $A(a)=b$ if and only if there exists a maximal compatible antichain of size $\langle a,b,0\rangle$.
Thus, one can show that $A\leq_TX''$ as before.

Conversely, if $A\leq_TX''$ then, using their $\Sigma^0_3(X)$-definitions, $A$ and its complement can be presented as follows:
\begin{align*}
a\in A\iff (\exists t\in\N)\liminf_{t\to\infty} A_t(a)=0,\\
a\not\in A\iff (\exists t\in\N)\liminf_{t\to\infty} \bar{A}_t(a)=0,
\end{align*}
where $(a,t)\mapsto A_t(a)$ and $(a,t)\mapsto\bar{A}_t(a)$ are two-valued $X$-computable functions.

In the following, when we refer to a {\em layer} in an isomorphic copy of $\prec_\ell$, we mean the isomorphic image of $\{m\}\times\mathbb{Q}$ for some $m<\ell$.
At each stage $s$, we construct (finite approximations of) infinitely many copies of $\prec_{\langle a,1,A_s(a)\rangle}$ at the $\langle a,1\rangle$-th region, and infinitely many copies of $\prec_{\langle a,0,\bar{A}_s(a)\rangle}$ at the $\langle a,0\rangle$-th region.
We also make junk components, which are infinitely many copies of $\prec_{\langle a,i,1\rangle}$.
At stage $s+1$, if $A_s(a)<A_{s+1}(a)$ then each copy of $\prec_{\langle a,1,{A}_s(a)\rangle}$ can be modified to a copy $\prec_{\langle a,1,{A}_{s+1}(a)\rangle}$ by enumerating fresh rationals at the top.
Even if $A_s(a)>A_{s+1}(a)$, the height of each copy of $\prec_{\langle a,1,{A}_s(a)\rangle}$ can be lowered by adding a relation that moves upper layers of rationals to the right of lower layers of rationals (see the next paragraph for a more formal explanation).
In this way, each copy of $\prec_{\langle a,1,{A}_s(a)\rangle}$ can be modified to a copy $\prec_{\langle a,1,{A}_{s+1}(a)\rangle}$.
Do the same for $\bar{A}_s$.

Let us give a more detailed explanation of the lowering construction in the case $A_s(a)>A_{s+1}(a)$.
Each element $x$ in the copy of $\prec_{\langle a,1,A_s(a)\rangle}$ in this region is assigned a role $(a,1,i,p)$ for some $i\leq 1$ and $p\in\mathbb{Q}$.
This role changes at most once in the construction.
At stage $s$, at most $s$ elements are enumerated in the copy.
The relationship between each pair of these elements follows directly the $\prec_{\langle a,1,A_s(a)\rangle}$-relation between the assigned roles.
Only at the stage where $A_s(a)>A_{s+1}(a)$ happens, the role of each element $x$ of the $\langle a,1,1\rangle$-th layer changes to play $(a,1,0,p)$ for sufficiently large $p$.
If $p$ is chosen to be larger than all the rationals mentioned so far, this role-change corresponds to placing this element $x$ above all the elements in the lower layers.
Here, only a finite number of elements are already there, and furthermore, an element of a lower layer never comes above an element of a higher layer, so the role-change can be performed consistently.
These are all the possibilities for a role-change to occur, only an element in the $\langle a,1,1\rangle$-th layer can cause a role-change, and such an element drops to the $\langle a,1,0\rangle$-th layer after the role change; hence the role change can occur only once at most.
Finally,  at each stage $s$, for each $n<s$ and $i\leq A_s(a)$, if there is no element with role $(a,1,i,p_n)$ where $p_n$ is the $n$th rational, we arrange a fresh element to give the role of $(a,1,i,p_n)$.

If $A_s(a)=i$ for almost all $s$, then each component at the $\langle a,1\rangle$-th region stabilizes to $\prec_{\langle a,1,i\rangle}$.
Otherwise, we have $A_s(a)>A_{s+1}(a)$ for infinitely many $s$.
Then, if an element $u$ in a component at the $\langle a,1\rangle$-th region belongs to a higher layer at some stage $s$, then at a stage $s'\geq s$ such that $A_{s'}(a)>A_{s'+1}(a)$, adding a relation moves $u$ to a lower layer, after which it never moves from that layer (since the role-change occurs only once at most).
Hence, every element eventually belongs to a lower layer, which means that such a component is isomorphic to $\prec_{\langle a,1,0\rangle}$.
Hence $a\in A$ if and only if there exists a copy of $\prec_{\langle a,1,0\rangle}$.
Indeed, if this is the case, there are infinite such copies.
The same holds for $\bar{A}$.
Hence, we have infinitely many copies of $\prec_{\langle a,A(a),0\rangle}$, and no copies of $\prec_{\langle a,1-A(a),0\rangle}$.
This process may also make an extra copy of $\prec_{a,i,1}$, but in any case, at the junk components, we also make infinitely many copies of $\prec_{a,i,1}$.
Consequently, this construction gives an $X$-computable copy of $\prec_A^\ast$.
\end{proof}

By using this, one can see that there exists a c.e.~irreflexive interpolable relation which is not isomorphic to any computable relation.
Indeed, one can prove a bit stronger.

\begin{corollary}
There exists a c.e.~irreflexive interpolable relation which is not isomorphic to any $X$-computable relation whenever $X$ is not high.
\end{corollary}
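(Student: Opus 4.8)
The plan is to apply Theorem~\ref{thm:degree-spectra-interpolable2} with $A=\emptyset''$, and to exploit the fact that a \emph{decidable} copy of $\prec_A^\ast$ carries strictly more effective information than a merely c.e.\ one: this drops the oracle needed to decode $A$ from $X''$ down to $X'$, which is exactly the bound relevant to highness.

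First I would fix the witnessing relation. Applying Theorem~\ref{thm:degree-spectra-interpolable2} with $A=\emptyset''$ and $X=\emptyset$, and using $\emptyset''\leq_T\emptyset''$, we obtain an $\emptyset$-c.e.\ (hence c.e.) isomorphic copy $R$ of $\prec_{\emptyset''}^\ast$. Since $\prec_{\emptyset''}^\ast$ is irreflexive and interpolable, and these properties are invariant under isomorphism, $R$ is a c.e.\ irreflexive interpolable relation. This $R$ is the desired witness.

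Next I would show that $R$ has no $X$-computable copy when $X$ is not high. Suppose $\prec$ is an $X$-computable relation isomorphic to $R$ (equivalently, to $\prec_{\emptyset''}^\ast$). I would re-run the decoding half of the proof of Theorem~\ref{thm:degree-spectra-interpolable2}, but with the sharper bookkeeping used in Theorem~\ref{thm:degree-spectra-interpolable}: since $\prec$ is now \emph{decidable} relative to $X$, for a finite tuple the predicate ``is an antichain'' is $X$-decidable, ``is compatible'' is $\Sigma^0_1(X)$, and---using the characterisation that a compatible antichain $\{x_1,\dots,x_n\}$ is maximal iff every $y$ lying in the same component as some $x_i$ is comparable with some $x_j$---``is maximal'' becomes $\Pi^0_1(X)$ rather than $\Pi^0_2(X)$. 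Hence ``is a maximal compatible antichain'' is a Boolean combination of $\Sigma^0_1(X)$ and $\Pi^0_1(X)$ conditions, so the set of maximal compatible antichains is $X'$-decidable. As in the proof of Theorem~\ref{thm:degree-spectra-interpolable2}, for each $a$ there is a maximal compatible antichain of size $\langle a,b,0\rangle$ iff $A(a)=b$, and exactly one $b\in\{0,1\}$ works; searching with $X'$ for such an antichain of size $\langle a,0,0\rangle$ or $\langle a,1,0\rangle$ therefore computes $A=\emptyset''$ from $X'$. Thus $\emptyset''\leq_T X'$, i.e.\ $X$ is high. Taking the contrapositive yields the corollary.

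The only delicate point, and the main thing to verify carefully, is the complexity drop in the maximality predicate: it is precisely the decidability (rather than mere c.e.-ness) of $\prec$ that turns the implication ``same component $\Rightarrow$ comparable'' into a $\Pi^0_1(X)$ condition, mirroring the $X'$ bound obtained in Theorem~\ref{thm:degree-spectra-interpolable} instead of the $X''$ bound of Theorem~\ref{thm:degree-spectra-interpolable2}. I would also make sure that the three coordinates $\langle a,i,c\rangle$ in the construction of $\prec_A^\ast$ genuinely separate the true components (with $c=0$) from the junk components (with $c=1$), so that a maximal compatible antichain of size $\langle a,b,0\rangle$ unambiguously encodes $A(a)=b$ and no junk component can interfere with the decoding.
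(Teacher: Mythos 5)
Your proof is correct and takes essentially the same route as the paper: the paper also sets $A=\emptyset''$, obtains the c.e.\ copy of $\prec_{\emptyset''}^\ast$ from Theorem~\ref{thm:degree-spectra-interpolable2}, and notes that an $X$-computable copy would give $\emptyset''\leq_T X'$ by the decoding argument of Theorem~\ref{thm:degree-spectra-interpolable}, contradicting non-highness. Your explicit verification that $X$-decidability of $\prec$ drops the maximality predicate to $\Pi^0_1(X)$, making the maximal compatible antichains $X'$-decidable, is exactly the step the paper compresses into the phrase ``as in the proof of Theorem~\ref{thm:degree-spectra-interpolable}, one can easily see that $A\leq_T X'$ if and only if there exists an $X$-computable isomorphic copy.''
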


\begin{proof}
For $A\subseteq\N$, let $\prec^\ast_A$ be an irreflexive interpolable relation as in Theorem \ref{thm:degree-spectra-interpolable2}.
If $A\leq_TX''$ then $\prec^\ast_A$ has a c.e.~copy.
Moreover, as in the proof of Theorem \ref{thm:degree-spectra-interpolable}, one can easily see that $A\leq_TX'$ if and only if there exists an $X$-computable isomorphic copy of $\prec_A^\ast$.
Hence, for $A=\emptyset''$, if $X$ is not high, i.e., $\emptyset''\not\leq_TX'$, then $\prec^\ast_A$ has no $X$-computable copy.
\end{proof}

\begin{corollary}
There exists a low$_3$-c.e.~irreflexive interpolable relation which is not isomorphic to any c.e.~relation.
\end{corollary}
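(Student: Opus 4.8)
The plan is to repeat, one jump level higher, the argument used for the low$_2$ corollary above, now relying on Theorem \ref{thm:degree-spectra-interpolable2} (the $X''$-version) in place of Theorem \ref{thm:degree-spectra-interpolable}. The key observation is that Theorem \ref{thm:degree-spectra-interpolable2} produces, for every $A$, an irreflexive interpolable relation $\prec^\ast_A$ whose $X$-c.e.\ presentations are governed \emph{exactly} by the condition $A\leq_T X''$. So to manufacture a relation that admits a presentation relative to a low oracle but admits no genuine c.e.\ presentation, I would feed into the theorem a set $A$ that sits strictly above $\emptyset''$ yet is captured by a sufficiently low double jump.

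Concretely, I would first fix a set $X$ that is low$_3$ but not low$_2$; such $X$ exist because the low hierarchy is proper (there is a set in $\mathrm{low}_3\setminus\mathrm{low}_2$). Then I would apply Theorem \ref{thm:degree-spectra-interpolable2} with $A=X''$. Since trivially $X''\leq_T X''$, the theorem yields an $X$-c.e.\ isomorphic copy of $\prec^\ast_{X''}$; as $X$ is low$_3$, this is a low$_3$-c.e.\ presentation of the relation. On the other hand, $\prec^\ast_{X''}$ has a plain c.e.\ copy precisely when the instance $X=\emptyset$ of the equivalence holds, i.e.\ when $X''\leq_T\emptyset''$, which is exactly the statement that $X$ is low$_2$. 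Since $X$ was chosen to fail low$_2$-ness, $\prec^\ast_{X''}$ is not isomorphic to any c.e.\ relation, which completes the argument.

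There is essentially no technical obstacle once Theorem \ref{thm:degree-spectra-interpolable2} is in hand: the entire content is the correct bookkeeping of jump levels, matching the pattern ``$A\leq_T X''\iff X$-c.e.\ copy'' against the two requirements ``$X$-c.e.\ for some low$_3$ oracle $X$'' and ``not $\emptyset$-c.e.''. The only external ingredient is the classical properness of the low hierarchy between levels two and three. The one point deserving a moment's care is confirming that ``low$_3$-c.e.'' is to be read as ``c.e.\ relative to some low$_3$ oracle'' (mirroring the reading of ``low$_2$ relation'' as ``of low$_2$ Turing degree'' in the preceding corollary), so that the $X$-c.e.\ copy produced above genuinely witnesses the claim.
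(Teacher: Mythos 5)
Your proposal is correct and is essentially the paper's own proof: both apply Theorem \ref{thm:degree-spectra-interpolable2} with $A=X''$ for a set $X$ that is low$_3$ but not low$_2$, read off the $X$-c.e.\ copy from $X''\leq_T X''$, and rule out a plain c.e.\ copy because that would force $X''\leq_T\emptyset''$, contradicting the failure of low$_2$-ness. The only cosmetic difference is that the paper picks its witness $X$ below $\emptyset'$, while you invoke the properness of the low hierarchy abstractly; the reading of ``low$_3$-c.e.''\ you settle on is the intended one.
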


\begin{proof}
If $A\not\leq_T\emptyset''$ then $\prec_A^\ast$ has no c.e.~isomorphic copy by Theorem \ref{thm:degree-spectra-interpolable}.
Thus, if $X$ is not low$_2$, then $\prec_{X''}^\ast$ has no c.e.~isomorphic copy, while it has an $X$-c.e.~copy, again by Theorem \ref{thm:degree-spectra-interpolable}.
Now, consider a low$_3$ set $X\leq_T\emptyset'$ which is not low$_2$.
\end{proof}

The same result can be obtained for the degree spectra of $\omega$-continuous domains instead of interpolable relations.
\medskip

{\bf Acknowledgement.} 
De Brecht's research was supported by JSPS KAKENHI Grant Number 18K11166.
Kihara's research was partially supported by JSPS KAKENHI Grant Numbers 19K03602 and 21H03392, and the JSPS-RFBR Bilateral Joint Research Project JPJSBP120204809.
Selivanov's research was supported by the RFBR-JSPS Bilateral Joint Research Project 20-51-50001.

\end{document}